\theoremstyle{plain}
\newtheorem*{theorem*}{Theorem}
\newtheorem{theorem}  {Theorem}    [section]
\newtheorem{lemma}      [theorem]{Lemma}
\newtheorem{corollary}  [theorem]{Corollary}
\newtheorem{proposition}[theorem]{Proposition}
\newtheorem{conjecture} {Conjecture}
\newtheorem{remark}  [theorem] {Remark}
\theoremstyle{definition}
\newtheorem{definition} [theorem]{Definition}
\renewcommand{\H}{\mathbb H}
\newcommand{\A}{{\mathbb A}}
\newcommand{\f}{{\mathbf f}}
\newcommand{\F}{{\mathcal F}}
\newcommand{\J}{{\mathcal J}}
\newcommand{\Q}{{\mathbb Q}}
\newcommand{\Z}{{\mathbb Z}}
\newcommand{\R}{{\mathbb R}}
\newcommand{\bs}{\backslash}
\newcommand{\p}{\mathfrak p}
\renewcommand{\P}{\mathcal P}
\newcommand{\OF}{{\mathfrak o}}
\newcommand{\GL}{{\rm GL}}
\newcommand{\SL}{{\rm SL}}
\newcommand{\SO}{{\rm SO}}
\newcommand{\sgn}{{\rm sgn}}
\newcommand{\ur}{{\rm ur}}
\newcommand{\mat}[4]{{\setlength{\arraycolsep}{0.5mm}\left[
      \begin{array}{cc}#1&#2\\#3&#4\end{array}\right]}}
\def\PSL{\operatorname{PSL}}
\def\SL{\operatorname{SL}}
\def\GL{\operatorname{GL}}
\renewcommand{\Im}{\mathrm{Im}}
\def\eps{\varepsilon}
\begin{document}

\bibliographystyle{plain}

\title{Hybrid sup-norm bounds for Maass newforms of powerful level}
\thanks{The author is supported by EPSRC grant EP/L025515/1.}

\author{Abhishek Saha}
\address{Department of Mathematics\\
  University of Bristol\\
  Bristol BS81TW\\
  UK} \email{abhishek.saha@bris.ac.uk}

\begin{abstract}Let $f$ be an $L^2$-normalized Hecke--Maass cuspidal newform of level $N$, character $\chi$
and Laplace eigenvalue $\lambda$. Let $N_1$ denote the smallest integer such that $N|N_1^2$ and $N_0$ denote the largest integer such that $N_0^2 |N$. Let $M$ denote the conductor of $\chi$ and define $M_1= M/\gcd(M,N_1)$. We prove the bound  $\|f\|_\infty \ll_{ \eps}   N_0^{1/6 + \eps} N_1^{1/3+\eps} M_1^{1/2} \lambda^{5/24+\eps},$ which generalizes and strengthens previously known upper bounds for $\|f\|_\infty.$

This is the first time a hybrid bound (i.e., involving both $N$ and $\lambda$) has been established for $\|f\|_\infty $  in the case of non-squarefree $N$. The only previously known bound in the non-squarefree case was in the $N$-aspect; it had been shown by the author that $\|f\|_\infty \ll_{\lambda, \eps}   N^{5/12+\eps}$ provided $M=1$. The present result significantly improves the exponent of $N$ in the above case. If $N$ is a squarefree integer, our bound reduces to $\|f\|_\infty \ll_\eps N^{1/3 + \eps}\lambda^{5/24 + \eps}$, which was previously proved by Templier.

The key new feature of the present work is a systematic use of $p$-adic representation theoretic techniques and in particular a detailed study of Whittaker newforms and matrix coefficients for $\GL_2(F)$ where $F$ is a local field.
\end{abstract}

\maketitle

\section{Introduction}\label{s:introduction}

\subsection{The main result}Let $f$ be a Hecke--Maass cuspidal newform on the upper half plane of weight 0, level $N$, character $\chi$, and Laplace eigenvalue $\lambda$. We normalize the volume of $Y_0(N)$ to be equal to 1 and assume that $\langle f, f\rangle := \int_{Y_0(N)} |f(z)|^2 dz = 1$. The problem of bounding the sup-norm $\|f\|_\infty:=\sup_{z\in Y_0(N)} |f(z)|$ in terms of the parameters $N$ and $\lambda$ is interesting from several points of view (quantum chaos, spectral geometry, subconvexity of $L$-functions, diophantine analysis) and has been much studied recently. For \emph{squarefree} levels $N$, there were several results, culminating in the best currently known bound due to Templier~\cite{templier-sup-2}, which states that $$\|f\|_\infty \ll_\eps \lambda^{5/24 + \eps} N^{1/3 + \eps}.$$
The exponent $5/24$ above  for $\lambda$ has stayed stable since the pioneering work of Iwaniec and Sarnak \cite{iwan-sar-85} (who proved $\|f\|_\infty \ll_\eps \lambda^{5/24 + \eps}$ in the case $N=1$), and it will likely require some key new idea to improve it. The exponent $1/3$ for $N$ in the above bound also appears difficult to improve, at least for squarefree levels, as it seems that the method used so far, primarily due to Harcos and Templier~\cite{harcos-templier-1, harcos-templier-2, templier-sup, templier-sup-2} has been pushed to its limit. The purpose of the present paper is to show that the situation is very different for \emph{powerful} (non-squarefree) levels.

To state our result, we introduce a bit of notation. Let $N_1$ denote the smallest integer such that $N|N_1^2$. Let $N_0$ be the largest integer such that $N_0^2 |N$. Thus $N_0$ divides $N_1$ and $N=N_0N_1$.\footnote{If $N$ has the prime factorization $N = \prod_p p^{n_p}$, then $N_0$ has the prime factorization $N_0 = \prod_p p^{\lfloor {n_p/2} \rfloor}$ and $N_1$ has the prime factorization $N_1 = \prod_p p^{\lceil {n_p/2} \rceil}$.} Note that if $N$ is squarefree, then $N_1=N$ and $N_0=1$. On the other hand, if $N$ is a perfect square or if $N$ is highly powerful (a product of high powers of primes) then $N_1 \asymp N_0 \asymp \sqrt{N}$. Also, let $M$ be the conductor of $\chi$ (so $M$ divides $N$) and put $M_1= M/\gcd(M,N_1)$. Note that $M_1$ divides $N_0$, and in fact $M_1$ equals 1 if and only if $M$ divides $N_1$. We will refer to the complementary situation of $M_1 >1$ (i.e., $M \nmid N_1$) as the case when the character $\chi$ is \emph{highly ramified}.

We prove the following result, which generalizes and strengthens previously known upper bounds for $\|f\|_\infty.$

\begin{theorem*} (Theorem~\ref{t:globalmain}) We have $$\|f\|_\infty \ll_{ \eps}   N_0^{1/6 + \eps} N_1^{1/3+\eps} M_1^{1/2} \lambda^{5/24+\eps}.$$
\end{theorem*}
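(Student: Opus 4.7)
The plan is to combine the Iwaniec--Sarnak amplification method with a detailed $p$-adic representation-theoretic analysis of Whittaker newforms and matrix coefficients at the ramified primes. First I would reformulate the problem adelically: the newform $f$ corresponds to a distinguished vector $\phi = \otimes_v \phi_v$ in the automorphic cuspidal representation $\pi = \otimes_v \pi_v$ of $\GL_2(\A_\Q)$ attached to $f$, with $\phi_\infty$ a spherical vector at the archimedean place and $\phi_p$ the normalized newvector at each finite $p$. The pointwise value $|f(z)|$ at $z$ in the upper half plane is then the absolute value of $\phi$ at a specific adelic point $g_z$.

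Next I would run the adelic pre-trace formula with an amplifier at good primes $\ell \nmid N$ of the standard Iwaniec--Sarnak shape, convolved with an archimedean test function matching $\pi_\infty$ and with matrix-coefficient data from $\pi_p$ at ramified primes. This should yield an inequality
\begin{equation*}
|f(z)|^2 \, L \ll \sum_{\gamma} K_\infty(g_z^{-1}\gamma g_z) \prod_p K_p(g_z^{-1} \gamma g_z),
\end{equation*}
where $L$ is the $L^2$ mass of the amplifier, $\gamma$ ranges over a set of rational matrices, $K_\infty$ is the usual archimedean Harish--Chandra / Selberg-type kernel, and each $K_p$ is essentially a matrix coefficient of $\pi_p$ against $\phi_p$. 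The archimedean contribution, combined with the amplifier, should reproduce the $\lambda^{5/24+\eps}$ savings via the Iwaniec--Sarnak bound on spherical functions for $\SL_2(\R)$.

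The heart of the work is the non-archimedean analysis. At a prime $p$ with $p^{n_p} \| N$, I would need to analyze two local quantities: (i) the matrix coefficient $\Phi_p(g) = \langle \pi_p(g) \phi_p, \phi_p\rangle / \langle \phi_p, \phi_p\rangle$, whose decay away from the stabilizer of $\phi_p$ is governed not only by the exponent $n_p$ but by the finer structure of $\pi_p$ (principal series, Steinberg, or supercuspidal, together with the ramification of the central character); and (ii) the local Whittaker newform $W_p$, whose support on the diagonal torus and whose peak value can be made explicit. Property (ii) should produce the $M_1^{1/2}$ factor via a Cauchy--Schwarz / Sobolev-type estimate on $W_p$ at the primes where the central character is ``highly ramified.'' Property (i), combined with a refined global count of rational $\gamma$ in the resulting constrained double cosets at each ramified $p$, should produce the improved exponents $N_0^{1/6}$ and $N_1^{1/3}$ in place of the trivial $N^{1/2}$ or the $N^{1/3}$ one would get by handling $N$ as though it were squarefree.

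The main obstacle will be the counting step. Unlike the squarefree case, where essentially one compact open subgroup plays a role at each ramified prime, for $p^2 \mid N$ one must stratify $\GL_2(\Q_p)$ into finitely many double cosets on which the matrix-coefficient values differ, and then balance two competing contributions to the global count: $\gamma$'s close to scalar matrices, whose count is driven by $N_0$, and $\gamma$'s far from scalars, driven by $N_1$. Optimizing the amplifier length against these two counts should yield the exponents $N_0^{1/6+\eps} N_1^{1/3+\eps}$. I expect significant effort to go into a uniform treatment of the three local types (principal series, Steinberg, supercuspidal) and into packaging the matrix-coefficient decay and the Whittaker newform estimates into a form that plugs cleanly into the amplified geometric counting.
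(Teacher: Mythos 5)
Your outline has the right ingredients (adelic reformulation, amplification at good primes, matrix coefficients at ramified primes, Whittaker newforms) but it misidentifies where the various factors in the final exponent actually come from, and this matters because your proposed route would run into exactly the obstruction that made the non-squarefree case hard before.

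The central structural point you are missing is that the paper gets the bulk of the $N$-savings \emph{before} amplification, from the Whittaker (Fourier) expansion alone, by working on a carefully chosen \emph{generating domain} inside $G(\A)$. The domain is obtained by composing Atkin--Lehner maps with the adelic set $\J_N \times \F_{N_2}$, where $\J_N$ is built from the local sets $Ka(\varpi^{n_1})$ (restricted by the matrix invariant $l(g)$ at primes dividing $N$ to an odd power); see Proposition~\ref{p:funddom}. On this domain the denominators appearing in the Whittaker expansion are controlled by $Q(g_\f) \mid N_0 M_1$, and combined with the local average Ramanujan-type bound for $W_{\pi_p}$ (Proposition~\ref{localwsupportfinal}) this already yields $|\phi| \ll_\eps M_1^{1/2} N_1^{1/2+\eps} \lambda^{1/4+\eps}$. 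So the factor $N_1^{1/2}$, not just $M_1^{1/2}$, is produced by the Whittaker side. Your plan instead treats the Whittaker newform estimate as a small auxiliary step to extract only the $M_1^{1/2}$ factor, which leaves you needing the amplifier to save roughly $N^{1/2}/N_0^{1/6}N_1^{1/3}$ in the counting — far more than the method can actually deliver.

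The second gap is in the geometric/counting step. You propose to ``stratify $\GL_2(\Q_p)$ into finitely many double cosets on which the matrix-coefficient values differ'' and to balance contributions from $\gamma$ near and far from scalars, with $N_0$ and $N_1$ driving the two regimes. That is not how the paper works, and attempting it would put you back in the situation where one needs genuinely new lattice-counting results for powerful moduli — precisely the bottleneck the paper avoids. Instead, the ramified test function $\Phi'_{\pi_p}$ (a truncated matrix coefficient of the translated newvector $v_\pi'$, à la Marshall) is supported on the full maximal compact at primes dividing $N$ to an even power and on $K^0(\p)$ at primes dividing $N$ to an odd power. Consequently the geometric side imposes congruences only modulo the \emph{squarefree} integer $N_2 = N_1/N_0$, and Templier's existing counting results apply with no modification. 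The entire role of amplification is to save the comparatively small factor $(N_1/N_0)^{1/6}\lambda^{1/24}$ on top of the Whittaker bound; the $N_0^{1/6}N_1^{1/3}$ in the theorem is just $N_1^{1/2}\cdot (N_1/N_0)^{-1/6}$. The eigenvalue lower bound $\delta_\pi \gg q^{-n_1-m_1}$ from Proposition~\ref{keymatrixprop} is what makes this profitable: it ensures the pre-trace formula with $\Phi'_N$ averages over few enough forms that the trivial bound already matches the Whittaker bound on compact sets.

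Finally, the two bounds are not combined by a single optimization of the amplifier length but by \emph{case analysis in $y$}: for large $y$ (far from the real line) the Whittaker bound alone wins, and for small $y$ the amplified pre-trace formula with $\Lambda = T^{1/6}N_2^{1/3}$ wins; see the discussion surrounding \eqref{finalfourierbd} and \eqref{finalamplbd}. Your sketch does not account for this dichotomy and would not produce the $N_0^{1/6}$ and $N_1^{1/3}$ exponents separately.
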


Thus in the squarefree case, our result reduces to that of Templier. However when $N_1 \asymp N_0 \asymp \sqrt{N}$, and $M|N_1$ (i.e., $\chi$ is not highly ramified), our result gives $\|f\|_\infty \ll_{ \eps}   N^{1/4 + \eps} \lambda^{5/24+\eps}.$ The exponent $1/4$ we obtain in this case is better than the exponent 1/3 in the squarefree case.

 We note that the only upper bound known before this for general (i.e., possibly non-squarefree) $N$ is due to the present author, and was proved only very recently~\cite{sahasuplevel}. It was shown that $$\|f\|_\infty \ll_{\lambda, \eps}  N^{5/12 + \eps}$$  when $\chi$ is trivial (no dependence on $\lambda$ was proved). The results of this paper not only substantially improve those of~\cite{sahasuplevel} but also use quite different methods. We believe that the approach we take in this paper, characterized by a systematic use of adelic language and local representation-theoretic techniques that separate the difficulties place by place, is the right one to take for powerful levels.

 As for the optimum upper bound for $\|f\|_\infty$, it is reasonable to conjecture that \begin{equation}\label{conjbd}\|f\|_\infty \ll_\eps N^{\eps} \lambda^{1/12+\eps},\end{equation} if $M_1 =1$  (i.e., provided $\chi$ is \emph{not} highly ramified). If true, \eqref{conjbd} is optimal as one can prove \emph{lower bounds} of a similar strength.\footnote{\textbf{Added in proof:} Recent work of the author with Yueke Hu suggests that \eqref{conjbd} may not hold in general in the case of powerful levels. This is due to the failure of Conjecture \ref{locallindelof} in certain cases, which we have recently discovered.} If $\chi$ is highly ramified, we cannot expect \eqref{conjbd} to hold, for reasons explained in \cite{sahasupwhittaker}. Roughly speaking, in the highly ramified case, the corresponding \emph{local Whittaker newforms} can have large peaks due to a conspiracy of additive and multiplicative characters. This leads to a lower bound for $\|f\|_\infty$ that is larger than $N^{\eps}$ in the $N$-aspect. This phenomenon was first observed by Templier \cite{templier-large} in the case when $\chi$ is maximally ramified ($M_1 = N_0$) and extended (to cover a much bigger range of $M_1$) in our paper \cite{sahasupwhittaker}. The fact that the factor $M_1^{1/2}$ is present in our main theorem above (giving worse upper bounds in the highly ramified case) fits nicely with this theme.

 In the table below we  compare the upper bound provided by this paper  with the lower bound provided by our paper  \cite{sahasupwhittaker}. We consider newforms of level $N = p^n$, $1 \le n \le 5$. The second column gives the possible values of $M$ in each case. The third, fourth and fifth columns give the corresponding values of $N_0$, $N_1$ and $M_1$, respectively. The sixth column gives the upper bound provided by the main theorem  of this paper and should serve as a nice numerical illustration of our result in the \emph{depth aspect} ($N=p^n$, $n \rightarrow \infty$). The seventh and final column gives the corresponding lower bound proved in Theorem 3.3 of \cite{sahasupwhittaker}. The difference between these last two columns reflects the gap in the state of our current knowledge. As the table makes clear, the larger upper bounds for highly ramified $\chi$ are often matched by larger lower bounds.

 Finally, we have coloured blue all the quantities on the last column that we (optimistically) conjecture to be in fact the \emph{true size} of $\|f\|_\infty$ (up to a factor of $(N\lambda)^\eps$) in those cases.

$$
\begin{array}{ccccccc}
  \toprule
   N  &M  &N_0  &N_1  &M_1  &\|f\|_\infty \ll_{\eps} N^{\eps} \lambda^{5/24 + \eps} \times \ldots   &\|f\|_\infty \gg_{ \eps} N^{-\eps} \lambda^{1/12 - \eps} \times \ldots \\
  \toprule
   p&1 \text{ or } p&1&p&1& N^{1/3 }&\color{blue}1\\
   \midrule
   p^2&1 \text{ or } p&p&p&1& N^{1/4 }&\color{blue}1\\
  \cmidrule{2-7}

   &p^2&p&p&p& N^{1/2  }
   &\color{blue}N^{1/4  }\\
  \midrule
  p^3&1 \text{ or } p \text{ or } p^2&p&p^2&1& N^{5/18  }&\color{blue}1\\
  \cmidrule{2-7}
  &p^3&p&p^2&p& N^{4/9  }
   &\color{blue}N^{1/6  }\\
  \midrule
  p^4&1 \text{ or } p \text{ or } p^2&p^2&p^2&1&  N^{1/4  }&\color{blue}1\\
  \cmidrule{2-7}
  &p^3&p^2&p^2&p& N^{3/8  }
   &\color{red}1\\
   \cmidrule{2-7}
  &p^4&p^2&p^2&p^2& N^{1/2  }
   &\color{blue}N^{1/4  }\\
   \midrule
  p^5&1 \text{ or } p \text{ or } p^2 \text{ or } p^3&p^2&p^3&1&  N^{4/15  }&\color{blue}1\\
  \cmidrule{2-7}
  &p^4&p^2&p^3&p& N^{11/30  }
   &\color{blue}N^{1/10  }\\
   \cmidrule{2-7}
  &p^5&p^2&p^3&p^2&N^{7/15  }
   &\color{blue} N^{1/5  }\\
    \bottomrule
 \end{array}
$$

\subsection{Organization of this paper}
The remainder of Section \ref{s:introduction} is an extended introduction that explains some of the main features of our work. In Section \ref{sec:localcalcs}, which is the technical heart of this paper,  we undertake a detailed analytic study of $p$-adic Whittaker newforms and matrix coefficients for representations of $\GL_2(F)$ where $F$ is a nonarchimedean local field of characteristic 0. The two main results we prove are related to a) the support and average size of $p$-adic Whittaker newforms, b) the size of eigenvalues of certain matrix coefficients. These might be of independent interest. In Section \ref{s:global}, we prove the main result above. Perhaps surprisingly, and in contrast to our previous work \cite{sahasuplevel}, no counting arguments are needed in this paper beyond those supplied by Templier for the squarefree case. Also, in contrast to \cite{sahasuplevel}, we do not need any powerful version of the ``gap principle". Instead, we rely almost entirely on the $p$-adic results of Section \ref{sec:localcalcs}.

\subsection{Squarefree versus powerful levels}
The first bound for $\|f\|_\infty$ in the $N$-aspect was proved by Blomer and Holowinsky \cite{blomer-holowinsky} who showed that $\|f\|_\infty \ll_{\lambda, \epsilon} N^{\frac{216}{457} + \epsilon}$. They also proved the hybrid bound $\|f\|_\infty \ll (\lambda^{1/2} N)^{\frac{1}{2} - \frac{1}{2300}}.$  These results were only valid under the assumption that $N$ is squarefree. After that, there was fairly rapid progress (again only assuming $N$ squarefree) by Harcos and Templier~\cite{harcos-templier-1, harcos-templier-2, templier-sup, templier-sup-2}, culminating in the hybrid bound due to Templier described earlier. Note that the $N$-exponent in Templier's case is $1/3$, which may be viewed as the ``Weyl exponent", as it is a third of the way from the trivial bound of $N^{1/2 +\eps}$ towards the expected optimum bound\footnote{As mentioned earlier, this optimum bound is only expected to hold when $\chi$ is not highly ramified.} of $N^\eps$.

For a long time, there was no result at all when $N$ is not squarefree. Indeed, all the papers of Harcos and Templier rely crucially on using \emph{Atkin-Lehner operators} to move any point of $\H$ to a point of  imaginary part $\ge \frac1N$  (which is essentially equivalent to using a suitable Atkin--Lehner operator to move any cusp to infinity). This only works if $N$ is squarefree. In \cite{sahasuplevel}, the first (and only previous) result for Maass forms of  non-squarefree level was proved; assuming that $M=1$ we showed that $\|f\|_\infty \ll_{\lambda, \eps}  N^{5/12 + \eps}.$ A key new idea in \cite{sahasuplevel} was to look at the behavior of $f$ around \emph{cusps of width 1} and to formulate all the geometric and diophantine results around such a cusp. Apart from this, the overall strategy was not that different from the works of Harcos and Templier and the exponent of $5/12$ obtained was weaker than the exponent 1/3 for the squarefree case.

An initial indication that the exponent $1/3 $ in the $N$-aspect might be beaten for powerful levels was given by Marshall~\cite{marsh15}, who showed recently that  for a newform $g$ of level $N$ and trivial character on a \emph{compact arithmetic surface} (i.e., coming from a quaternion division algebra) the bound $$\|g\|_\infty \ll_{ \eps}  \lambda^{1/4 + \eps}N_1^{1/2 + \eps}$$ holds true. In particular, when $N$ is sufficiently powerful, this gives a ``sub-Weyl" exponent of $1/4 $ in the $N$-aspect. Marshall's proof does not work for the usual Hecke--Maass newforms $f$ on the upper-half plane of level $N$ that we consider in this paper (though it does work for certain shifts of these $f$ when restricted to a \emph{fixed compact} set). Finally, the main result of the present paper gives (when $\chi$ is not highly ramified) the bound $\|f\|_\infty \ll_{ \eps}   N_0^{1/6 + \eps} N_1^{1/3+\eps} \lambda^{5/24+\eps}$  which may be viewed as a strengthened analogue of Marshall's result for cusp forms on the upper-half plane.

 As indicated already, the powerful level case has been historically more difficult than the squarefree case.  It may thus seem surprising that in the powerful case,  we succeed in obtaining better exponents than in the squarefree case. However this seems to be a relatively common phenomenon. For example, for the related problem of quantum unique ergodicity in the level aspect, the known results in the squarefree case \cite{PDN-HQUE-LEVEL} give mass equidistribution with no power-savings but for powerful levels one obtains mass equidistribution with power savings \cite{NPS}. Again, for the problem of proving strong subconvexity bounds in the conductor aspect for Dirichlet $L$-functions, one only has a Weyl exponent $1/6$ when the conductor is squarefree, but Milicevic \cite{milisubconv} has shown an improved exponent of $.1645.. < 1/6$ for high prime powers. The results of this paper continue this surprising pattern (for which we do not attempt to give a general conceptual explanation).

 \subsection{Fourier expansions and efficient generating domains}

It seems worth noting explicitly the following interesting technical aspect of our work: the method of Fourier (Whittaker) expansion, once one chooses a good (adelic) \emph{generating domain}, leads to the rather strong bound $\|f\|_\infty \ll_{\eps}   M_1^{1/2}N_1^{1/2 + \eps} \lambda^{1/4+\eps}$. Note that this bound reduces to the ``trivial bound" when $N$ is squarefree, but is almost of the same strength (in the $N$-aspect) as our main theorem when $N$ is sufficiently powerful. In this subsection, we briefly  explain the ideas behind this.

It is best to work adelically here.
Let $\phi$ be the automorphic form associated to $f$, and let $g = g_\f g_\infty \in G(\A),$ where $g_\f$ denotes the finite part of $g$ and $g_\infty$ denotes the infinite component. Then $\|f\|_\infty = \sup_{g\in G(\A)} |\phi(g)|$. Because of the invariance properties for $\phi$, it suffices to restrict $g$ to a suitable  generating domain $D \subset  G(\A)$. Roughly speaking, $D$ can be any subset of $G(\A)$ such that the natural map from $D$ to $Z(\A) G(\Q) \bs G(\A) /\bar{K}$ is a surjection where $\bar{K}$ is a subgroup of $G(\A)$ generated by a set of elements under which $|\phi|$ is right-invariant.

The Whittaker expansion for $\phi$, which we want to exploit to bound $|\phi(g)|$, looks as follows: $$\phi(g) = \sum_{q \in \mathbb{Q}_{\neq 0}}
W_\phi(\mat{q}{}{}{1} g).$$ The above is an infinite sum, but two things make it tractable. First of all there is an integer $Q(g_\f)$, depending on $g_\f$, such that the sum is supported only on those $q$ whose denominator divides $Q(g_\f)$. Secondly, the sum decays very quickly after a certain point $|q| > T(g_\infty)$ due to the exponential decay of the Bessel function. The upshot is that \begin{equation}\label{whitintro}|\phi(g)| \ll \sum_{\substack{n \in \Z_{\neq 0}\\ |n|< Q(g_\f)T(g_\infty)}}
W_\phi(\mat{n/Q(g_\f)}{}{}{1} g).\end{equation}

The key quantity is the \emph{length} $Q(g_\f)T(g_\infty)$ of the sum above. Indeed, assuming Ramanujan type bounds on average for the local Whittaker newforms and using Cauchy-Schwartz, the expression \eqref{whitintro} leads to the inequality\footnote{Strictly speaking, this inequality is not completely accurate  as one has to add an (usually smaller) error term coming from peaks of the local Whittaker and $K$-Bessel functions.} $|\phi(g)| \ll_{\eps} (Q(g_\f)T(g_\infty))^{1/2 + \eps}$. The key point therefore, is to choose an efficient generating domain $D$ inside $G(\A)$, such that $\sup_{g \in D} Q(g_\f)T(g_\infty)$ is as small as possible.

Let us look at some examples. Suppose $\phi$ corresponds to a Hecke-Maass cusp form for $\SL_2(\Z)$. Then it is natural to take  $D$ to be the subset of $G(\A)$ consisting of the elements $g$ with $g_\f =1$ and $g_\infty= \mat{y}{x}{}{1}$ such that $-1/2 \le x \le 1/2$ and $y \ge \sqrt{3}/2$. In this case $Q(g_\f)=1$ and $T(g_\infty) = \lambda^{1/2}/y$, leading to the bound $|\phi(g)| \ll_{\eps} \lambda^{1/4 + \eps}$ as expected. Next, suppose $\phi$ corresponds to a newform of level $N$ where $N$ is squarefree. In this case one can include the Atkin-Lehner operators inside the symmetry group $\bar{K}$ above. Harcos and Templier showed that one can take $D$ to be the subset of $G(\A)$ consisting of the elements with $g_\f=1$ and  for which $g_\infty = \mat{y}{x}{}{1}$ such that $y \ge \sqrt{3}/(2N)$ (and some additional properties). For such an element, one again has $Q(g_\f)=1$ and $T(g_\infty) = \lambda^{1/2}/y$ leading to the bound $|\phi(g)| \ll_{\eps} (\lambda^{1/2}/y)^{1/2 + \eps} \ll   N^{1/2 + \eps}\lambda^{1/4 + \eps}$.

When $N$ is non-squarefree, it is not possible to construct a generating domain $D$ with a finite value of $\sup_{g \in D} T(g_\infty)$ for which all points have $g_\f=1$. Classically, this means that any fundamental domain (for the full symmetry group generated by $\Gamma_0(N)$ and the Atkin-Lehner operators) must touch the real line. The idea we used in our previous paper \cite{sahasuplevel} was to take the infinite part of $D$ essentially the same as in the squarefree case and take the finite part to be a certain nice subset of $\prod_{p|N}\GL_2(\Z_p)$. Classically, our choice of generating domain in \cite{sahasuplevel} corresponded to taking discs around cusps of width 1. Assuming $\chi=1$, this choice again gave $Q(g_\f)=1$ and $T(g_\infty) = \lambda^{1/2}/y$ leading to the same bound $|\phi(g)| \ll_{\eps} N^{1/2 + \eps}\lambda^{1/4 + \eps}$ as earlier. Thus, in all the above papers, the worst case bound obtained by the Whittaker expansion (i.e., for smallest $y$) was just the \emph{trivial bound} $N^{1/2 + \eps}\lambda^{1/4 + \eps}$ (also, all these papers restricted to $\chi=1$.)

In this paper we choose a somewhat different generating domain from that of \cite{sahasuplevel}.  For simplicity, we describe this domain here in the special case when $N=p^{2n_0}$, $M=p^m$, for some prime $p$ and some non-negative integers $n_0$, $m$. Take $D$ to consist of the elements $g_pg_\infty$ where $g_\infty = \mat{y}{x}{}{1}$ with $y \ge \sqrt{3}/2$ and $g_p \in \GL_2(\Z_p)\mat{p^{n_0}}{}{}{1}$. It is easy to prove this is a generating domain. The difficulties lie in computing $Q(g_\f)$ and in proving that the required Ramanujan type bounds on average hold. These key  technical local results involve intricate calculations that take up a good part  of Section \ref{sec:localcalcs}. We are able to prove that $\sup_{g \in D}Q(g_\f)= p^{\max(m,n_0)} = M_1\sqrt{N}$. Also, $T(g_\infty) = \lambda^{1/2}/y$ as usual. This leads to the surprisingly strong Whittaker expansion bound of $|\phi(g)| \ll_{\eps} M_1^{1/2} N^{1/4 + \eps}\lambda^{1/4 + \eps}$ in this case. Classically, the generating domain described above corresponds to taking discs around the cusps of the group $$\Gamma_0(p^{n_0}, p^{n_0}) = \{\mat{a}{b}{c}{d} \in \SL_2(\Z): p^{n_0}|b, \ p^{n_0}|c \}.$$ We remark here that the function $f'(z):=f(z/p^{n_0})$ is a Maass form for $\Gamma_0(p^{n_0}, p^{n_0})$.

When $N$ is not a perfect square, the generating domain we actually use is slightly different than described above. Roughly speaking, we exploit the existence of Atkin-Lehner operators at primes that divide $N$ to an odd power. This does not change the value of $\sup_{g \in D}  Q(g_\f)T(g_\infty)$ and so does not really affect the Whittaker expansion analysis; however it makes it easier to count lattice points for amplification (described in the next subsection).  In any case, the Whittaker expansion bound we prove ultimately (see Section \ref{s:fourierglobal}) is  $|\phi(g)| \ll_{\eps} (N_0M_1\lambda^{1/2}/y)^{1/2 + \eps}$ where $y \ge N_0/N_1$, leading to the worst case bound of $|\phi(g)| \ll_{\eps} M_1^{1/2} N_1^{1/2 + \eps}\lambda^{1/4 + \eps}$. This, as mentioned earlier, is essentially of the same strength (in the $N$-aspect) as our main theorem when $N$ is sufficiently powerful.

It bears repeating that the main tools used for  the above bound are local, relating to the representation theory of $p$-adic Whittaker functions. This supports the assertion of Marshall \cite{marsh15} that  $N_1^{1/2 + \eps}$ should be viewed as the correct \emph{local bound }in the level aspect (when $\chi$ is not highly ramified). Our analysis of these $p$-adic Whittaker functions also lead to other interesting questions. For example, one can ask for a sup-norm bound for these local Whittaker newforms, and in Conjecture \ref{conjbd}, we predict a Lindel\"of type bound when $\chi$ is not highly ramified (this conjecture was originally made in our paper \cite{sahasupwhittaker}). One of the key technical results in Section \ref{sec:localcalcs} essentially proves an \emph{averaged} version of this conjecture (this is the Ramanujan type bound on average alluded to earlier).

\subsection{The pre-trace formula and amplification}
Recall that our main theorem states that $\|f\|_\infty \ll_{ \eps}   N_0^{1/6 + \eps} N_1^{1/3+\eps} M_1^{1/2} \lambda^{5/24+\eps}$. As we have seen above, the method of Fourier (Whittaker) expansion gives us the bound $\|f\|_\infty \ll_{\eps}  N_1^{1/2 + \eps} M_1^{1/2}\lambda^{1/4+\eps}$ (with even better bounds when the relevant point on our generating domain has a large value for $y$) so we need to save a further factor of $(N_1/N_0)^{1/6}\lambda^{1/{24}}$. This is done by \emph{amplification}, whereby we choose suitable test functions at each prime to obtain a \emph{pre-trace formula} and then estimate its geometric side via some  point counting results due to Harcos--Templier \cite{harcos-templier-2} and Templier \cite{templier-sup-2}. The basic idea is that by choosing these local test functions carefully (constructing an amplifier) one should be able to boost the contribution of the newform $f$  to the resulting pre-trace formula. The details for this are given (in a fairly flexible adelic framework) in Sections \ref{s:amplglobal} -- \ref{s:conclusion}.

 The unramified local test functions that we use in this paper are standard and essentially go back to Iwaniec--Sarnak (the key point is to exploit a simple identity relating the eigenvalues for the Hecke operators $T(\ell)$ and $T(\ell^2)$). However, our ramified local test functions are very different from the papers of Harcos--Templier or our previous paper \cite{sahasuplevel}. In all those past papers, the ramified test functions had been simply chosen to be the characteristic functions of the relevant congruence subgroups. In contrast, we use a variant of the local test function used by Marshall in \cite{marsh15}. The main results about this test function are proved in Sections \ref{s:mcbeg} -- \ref{s:endmc}. Roughly speaking, it is (the restriction to a large compact subgroup of) the \emph{matrix coefficient} for a local vector $v'$ obtained by translating the local newform.
The key property of this test function is that its unique non-zero eigenvalue is fairly large (and $v'$ is an eigenvector with this eigenvalue).

  Our choice of test functions at ramified primes ensures that any pre-trace formula involving them averages over relatively few representations of level $N$. It may be useful to view this as a ramified analogue of the classical (unramified) amplifier. Indeed, the resulting ``trivial bound" obtained via the pre-trace formula (by choosing the unramified test functions
trivially) matches exactly (on compact subsets) with the strong local bounds obtained via Whittaker
expansion. This is an important point because it means that we only need to save a further factor of $(N_1/N_0)^{1/6}\lambda^{1/{24}}$ by putting in the unramified amplifier and counting lattice points. This is carried out in Section \ref{s:conclusion}.

 It is worth noting that we do not need any new counting results in this paper beyond those proved by Harcos and Templier. This is because the counting part of our paper is only concerned with the squarefree integer $N_1/N_0$. In particular, the role of amplification in this paper to improve the $N$ exponent  is relatively minor when $N$ is highly powerful (note that $N_1/N_0$ approaches a negligible power of $N$ as $N$ gets more powerful). Indeed, when $N$ is a perfect square ($N_1 = N_0$), all our savings in the $N$-aspect come from Whittaker expansion and we do not gain anything further by amplification.\footnote{However, we always gain a non-trivial savings in the $\lambda$ aspect via amplification.} In contrast, our paper \cite{sahasuplevel} had a relatively poor
bound coming from Whittaker expansion but we then saved a non-trivial power of $N$ via amplification.

 The technical reason why the method of amplification does not improve the $N$-aspect too much beyond our strong local bounds is that our ramified test functions have relatively large support. Consequently, we do not have many global congruences related to $N$, and  congruences are essential for savings via counting. More precisely, our ramified test functions are supported on the maximal compact subgroup at primes that divide $N$ to an even power, and supported on a (slightly) smaller subgroup at primes that divide $N$ to an odd power (it is the latter case that leads to the savings of $(N_1/N_0)^{1/6}$). If we were to reduce the support of our test functions further and thus force new congruences, the resulting savings via counting would be eclipsed by the resulting loss due to the fact that our pre-trace formula would now be averaging over more representations of level $N$. Somehow the ramified and unramified parts of the amplifier seem to work against each other and the key point is to strike the right balance.

 It would be an interesting and challenging problem to detect any additional cancellation on the geometric side of our pre-trace formula by going beyond counting lattice points and perhaps taking into account the \emph{phases }of the matrix coefficient used to construct the ramified test function. Such a result could potentially push the upper-bound for $\|f\|_\infty$ below $N^{1/4}$.

\subsection{Notations}
We collect here some general notations that will be used throughout this paper. Additional notations will be defined where they first appear in the paper.

Given two integers $a$ and $b$, we use $a|b$ to denote that $a$ divides $b$, and we use $a|b^\infty$ to denote that $a|b^n$ for some positive integer $n$. For any real number $\alpha$, we let $\lfloor \alpha \rfloor$ denote the greatest integer less than or equal to $\alpha$ and we let $\lceil \alpha \rceil$ denote the smallest integer greater than or equal to $\alpha$. The symbol $\A$ denotes the ring of adeles of $\Q$ and $\A_\f$ denotes the subset of finite adeles. For any two complex numbers $\alpha, z$, we let $K_\alpha(z)$ denote the modified Bessel function of the second kind.

The groups $\GL_2$, $\SL_2$, $\PSL_2$, $\Gamma_0(N)$ and $\Gamma_1(N)$ have their usual meanings. The letter $G$ always stands for the group $\GL_2$. If $H$ is any subgroup of $G$, and $R$ is any subring of $\R$, then $H(R)^+$ denotes the subgroup of $H(R)$ consisting of matrices with positive determinant.

 We let $\H = \{x + iy: x \in \R, \ y\in \R, \ y>0\}$ denote the upper half plane. For any $\gamma=\begin{pmatrix}a&b\\c&d\end{pmatrix}$ in $\GL_2(\R)^+$, and any $z \in \H$, we define $\gamma(z)$ or $\gamma z$ to equal $\frac{az+b}{cz+d}$. This action of $\GL_2(\R)^+$ on $\H$  extends naturally to the boundary of $\H$.

We say that a function $f$ on $\H$ is a Hecke--Maass cuspidal newform of weight 0, level $N$, character $\chi$ and Laplace eigenvalue $\lambda$ if it has the following properties:
      \begin{itemize}

     \item $f$ is a smooth real analytic function on $\H$.

     \item $f$ satisfies  $(\Delta + \lambda) f =
0$ where $\Delta := y^{-2}
(\partial_x^2 + \partial_y^2)$.

\item For all $\gamma = \mat{a}{b}{c}{d} \in \Gamma_0(N)$, $f(\gamma z) = \chi(d) f(z)$.

\item $f$ decays
rapidly at the cusps of $\Gamma_1(N)$.

\item  $f$ is orthogonal to all oldforms.

 \item $f$ is an eigenfunction of all  the Hecke and Atkin-Lehner operators.\footnote{Assuming the previous properties, this last property is equivalent to the weaker condition that $f$ is an eigenfunction of almost all Hecke operators.}

      \end{itemize}

The study of newforms $f$ as above is equivalent to the study of corresponding adelic newforms $\phi$ which are certain functions on $G(\A)$. For the details of this correspondence, see Remark \ref{r:adelic}.

 We use the notation
$A \ll_{x,y,z} B$
to signify that there exists
a positive constant $C$, depending at most upon $x,y,z$,
so that
$|A| \leq C |B|$.
 The symbol $\epsilon$ will denote a small positive quantity. The values of $\epsilon$ and that of the constant implicit in $\ll_{\epsilon, \ldots}$ may change from line to line.

\subsection{Acknowledgements} I would like to thank Edgar Assing, Simon Marshall, Paul Nelson, and Nicolas Templier for useful discussions and feedback.

\section{Local calculations}\label{sec:localcalcs}

\subsection{Preliminaries}\label{sec:2-notations}

We begin with fixing some notations that will be used throughout this section.
Let  $F$ be a non-archimedean local
  field  of characteristic zero whose
  residue field has cardinality $q$.
Let $\mathfrak{o}$ be its ring of integers,
and $\mathfrak{p}$ its maximal ideal.
Fix a generator $\varpi$ of $\mathfrak{p}$.
Let $|.|$ denote the absolute value
on $F$ normalized so that
$|\varpi| = q^{-1}$. For each $x \in F^\times$, let $v(x)$ denote the integer such that $|x| = q^{-v(x)}$.  For a non-negative integer $m$, we define the subgroup $U_m$ of  $\OF^\times$ to be the set of elements $x \in \OF^\times$ such that $v(x-1) \ge m$.

Let $G = \GL_2(F)$ and $K = \GL_2(\mathfrak{o})$.
For each integral ideal $\mathfrak{a}$ of $\mathfrak{o}$,
let
\[
K_0(\mathfrak{a}) = K \cap \begin{bmatrix}
  \mathfrak{o}  & \mathfrak{o}  \\
  \mathfrak{a}  & \mathfrak{o}
\end{bmatrix}, \quad
K_1(\mathfrak{a})
= K \cap \begin{bmatrix}
  1+ \mathfrak{a}  & \mathfrak{o}  \\
  \mathfrak{a}  & \mathfrak{o}
\end{bmatrix}, \quad K^0(\mathfrak{a}) = K \cap \begin{bmatrix}
  \mathfrak{o}  & \mathfrak{a}  \\
  \mathfrak{o}  & \mathfrak{o}
\end{bmatrix}.
\]

Write
\[
w = \begin{bmatrix}
  0 & 1 \\
  -1 & 0
\end{bmatrix},
\quad
a(y) = \begin{bmatrix}
  y &  \\
  & 1
\end{bmatrix},
\quad
n(x) = \begin{bmatrix}
  1 & x \\
  & 1
\end{bmatrix},
\quad z(t)
= \begin{bmatrix}
  t &  \\
  & t
\end{bmatrix}
\]
for $x \in F, \ y \in F^\times, \ t \in F^\times$.
Define subgroups
$N =
\{n(x):  x\in F \}$,
$A = \{a(y): y\in F^\times \}$,
$Z =\{ z(t):
t \in F^\times \}$, $B_1=NA$,
and $B = Z N A = G \cap
\left[
  \begin{smallmatrix}
    *&*\\
    &*
  \end{smallmatrix}
\right]$ of $G$.

We normalize Haar measures as
follows.
The measure $dx$ on the additive group $F$ assigns volume 1
to $\OF$, and transports to a measure on $N$.
The measure $d^\times y$ on the multiplicative group $F^\times$ assigns
volume 1 to $\OF^\times$,
and transports to measures on
$A$ and $Z$.
We obtain a left Haar measure $d_Lb$ on $B$ via
$d_L(z(u)n(x)a(y)) = |y|^{-1}\, d^\times u \, d x \, d^\times
y.$
Let $dk$ be the probability Haar measure on $K$.
The Iwasawa decomposition
$G = B K$ gives a left Haar measure $dg = d_L b \, d k$ on $G$.

For each irreducible admissible
representation $\sigma$ of $G$ (resp. of $F^\times$), we define $a(\sigma)$ to be the smallest non-negative integer such that $\sigma$ has a $K_1(\p^{a(\sigma)})$-fixed (resp. $U_{a(\sigma)}$-fixed) vector.

\subsection{Some matrix invariants}\label{s:repnew}

\emph{From now on, fix $\pi$ to be a generic
irreducible admissible unitary
representation of $G$. Let $n=a(\pi)$, and let $\omega_\pi$ denote the central character of $\pi$.}

It is convenient now to introduce some notation. Define

\begin{itemize}
\item $n_1 := \lceil \frac{n}{2} \rceil$,

\item $n_0:= n -n_1 = \lfloor \frac{n}{2} \rfloor$,

\item $m = a(\omega_\pi)$,

\item $m_1 = \max(0, m-n_1)$.
\end{itemize}
 Note that $m_1=0$ if and only if $m \le n_1$; this can be viewed as the case when $\omega_\pi$ is \emph{not} highly ramified.

 Next, for any $g \in G$, we define two integers $t(g)$ and $l(g)$ which depend on $g$ and $n$. Recall the disjoint double coset decomposition~\cite[Lemma 2.13]{sahasupwhittaker}:
\begin{equation}\label{e:coset}G = \bigsqcup_{t \in \Z}\bigsqcup_{0\le l \le n} \ \bigsqcup_{v \in \OF^\times/U_{\min(l, n-l)}} Z N a(\varpi^t)wn(\varpi^{-l}v) K_1(\p^{n}).\end{equation}
Accordingly, given any  matrix $g \in G$, we define $t(g)$ and $l(g)$ to be the unique integers such that

\begin{itemize}

\item $0 \le l(g) \le n$,

\item $g \in Z N a(\varpi^{t(g)})wn(\varpi^{-l(g)}v)K_1(\p^{n})$ for some $v \in \OF^\times$.

\end{itemize}

\begin{remark} It is illuminating to restate  these matrix invariants slightly differently. Let $g$ in $G$. The Iwasawa decomposition tells us that $g \in ZNa(y)k$ where $k = \mat{a}{b}{c}{d} \in K$. Then one can check that $l(g) = \min(v(c), n)$, and $t(g) = v(y) - 2l(g)$.
\end{remark}

 In the sequel, we will often consider matrices $g$ lying in the set $Ka(\varpi^{n_1})$. The next few lemmas explicate some key properties of this set.

\begin{lemma}\label{usefull}Suppose that $k \in K$ and $n$ is odd (so $n_1 = n_0+1$). Then

\begin{enumerate}

\item $l(ka(\varpi^{n_1})) \ge n_1$ if and only if $k \in N(\OF)K^0(\p)$.

 \item  $l(ka(\varpi^{n_1})) \le n_0$ if and only if $k \in wK^0(\p)$.
\end{enumerate}
\end{lemma}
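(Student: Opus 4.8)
The plan is to reduce everything to the formula from the preceding Remark: for $k=\mat{a}{b}{c}{d}\in K$, one has $l(ka(\varpi^{n_1})) = \min(v(c'),n)$ where $\mat{a'}{b'}{c'}{d'}$ is the $K$-part in an Iwasawa decomposition of $ka(\varpi^{n_1})$. So the first step is to compute an Iwasawa decomposition of $ka(\varpi^{n_1})$ explicitly in terms of the entries of $k$, distinguishing the two cases according to whether $c\in\OF^\times$ (equivalently $v(c)=0$) or $c\in\p$ (equivalently $v(c)\ge 1$); these are exactly the two cosets $K = K^0(\p) \sqcup wK^0(\p)$ one gets from the Bruhat-type decomposition of $K$ modulo $K^0(\p)$. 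When $v(c)=0$ we can right-multiply $k$ by a lower-triangular element of $K^0(\p)$ — actually we want to clear entries — and push the diagonal $a(\varpi^{n_1})$ through; when $c\in\p$ we instead write $k \in wK^0(\p)$ and similarly track what happens to the lower-left entry after absorbing $a(\varpi^{n_1})$.

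Concretely, in the case $k\in K^0(\p)$, write $k = \mat{a}{b}{0}{d}\cdot(\text{something in }K_0(\p))$ is not quite right; better to just directly manipulate: if $v(c)\ge 1$ then $\mat{a}{b}{c}{d}a(\varpi^{n_1}) = \mat{a\varpi^{n_1}}{b}{c\varpi^{n_1}}{d}$, and since $d\in\OF^\times$ (as $\det k\in\OF^\times$ and $v(c)\ge1$) one can do a row operation via left multiplication by $N(\OF)$: indeed $n(-b d^{-1})\cdot$ is wrong direction — I should left-multiply by an upper unipotent to kill the top entries, i.e. write $\mat{a\varpi^{n_1}}{b}{c\varpi^{n_1}}{d} = n(x)\mat{*}{*}{c\varpi^{n_1}}{d}$ with the top row becoming $(0,*)$ is impossible since $d$ is a unit. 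The clean statement: $\mat{a\varpi^{n_1}}{b}{c\varpi^{n_1}}{d}$ has bottom row $(c\varpi^{n_1}, d)$ with $d$ a unit, so its $K$-part, after Iwasawa, has a lower-left entry of valuation $v(c\varpi^{n_1}) - v(d) = v(c)+n_1 \ge n_1$ (since $v(c)\ge 0$), hence $l \ge \min(n_1,n)=n_1$; and refining, $l\ge n_1$ with equality issues controlled by whether $v(c)=0$. Conversely, in the case $k\in wK^0(\p)$, i.e. $v(c)=0$ (so $c$ a unit, $a\in\p$... no: $k\in wK^0(\p)$ means $k = \mat{0}{1}{-1}{0}\mat{\alpha}{\beta}{0}{\delta}\cdot(\ldots)$, giving $c$ a unit), then $\mat{a}{b}{c}{d}a(\varpi^{n_1})=\mat{a\varpi^{n_1}}{b}{c\varpi^{n_1}}{d}$ and now the Iwasawa $K$-part: left-multiply by $n(x)$ to clear — since $c\varpi^{n_1}$ has valuation $n_1$ and we want the bottom-right or manipulate via $w$; one finds the lower-left valuation of the $K$-part is $v(b) - \text{(valuation of the new pivot)}$, working out to $\le n_0$. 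The key arithmetic fact driving the split is $n_1 = n_0+1$ for odd $n$: the two regimes $l\ge n_1$ and $l\le n_0$ are complementary and cover everything, and the dividing line falls exactly between the two $K^0(\p)$-cosets of $K$.

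So the skeleton is: (i) decompose $K = N(\OF)K^0(\p)\ \sqcup\ wK^0(\p)$ (note $N(\OF)K^0(\p)$ is just the set of $k$ with $c\in\p$, using $N(\OF)\subset K^0(\p)$-normalizing tricks, or more simply observe $N(\OF)K^0(\p)=\{k: v(c)\ge1\}$ and $wK^0(\p)=\{k:v(c)=0\}$, and these are disjoint and cover $K$); (ii) in each case substitute into $ka(\varpi^{n_1})$, perform the Iwasawa decomposition by an explicit elementary-matrix computation, and read off $v$ of the lower-left $K$-entry; (iii) invoke the Remark's identity $l(g)=\min(v(c_{\mathrm{Iw}}),n)$ to conclude $l\ge n_1$ in the first case and $l\le n_0$ in the second; (iv) since the two cases exhaust $K$ and the two conclusions are mutually exclusive (as $n_0<n_1$), each "if" upgrades to "if and only if".

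I expect the main obstacle to be purely bookkeeping: getting the Iwasawa $K$-part's lower-left valuation exactly right in the $v(c)=0$ case, because there one genuinely uses the relation among the entries coming from $\det k\in\OF^\times$ (so $ad-bc$ a unit, forcing constraints on $v(a),v(b)$ when $c$ is a unit), and one must be careful that the row/column operations used to bring $ka(\varpi^{n_1})$ to Iwasawa form stay inside $N(F)$ and $A$ on the left and $K$ on the right. A safe way to avoid sign and direction errors is to work with the characterization $l(g)=\min(v(c),n)$ where now $c$ is the lower-left entry of \emph{any} $K$-representative: multiply $ka(\varpi^{n_1})$ on the right by a carefully chosen element of $K$ (to clear denominators) and on the left by an element of $B(F)$, and just compute. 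I would also double-check the edge case $n_1 = n$ (which happens only if $n=1$, $n_0=0$) to make sure the clauses $l\ge n_1$ and $l\le n_0=0$ still partition correctly. Apart from that, the statement should fall out mechanically from the Remark once the two-coset decomposition of $K$ is in place.
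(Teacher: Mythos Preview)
Your strategy is sound and genuinely different from the paper's: the paper works directly with the double-coset representatives $wn(\varpi^{-l}v)$, writes $ka(\varpi^{n_1})=b\,wn(\varpi^{-l}v)\,k'$ with $k'\in K_1(\p^n)$, conjugates by $a(\varpi^{n_1})$, and exhibits an explicit $b_2\in B$ with $b_2wn(\varpi^{n_1-l}v)\in K^0(\p)$; you instead invoke the Remark's formula $l(g)=\min(v(c'),n)$ and compute the Iwasawa $K$-part of $ka(\varpi^{n_1})$ via its bottom row. Both routes ultimately rest on the partition $K=N(\OF)K^0(\p)\sqcup wK^0(\p)$ and the same exhaustion argument for the reverse implications; your approach is more direct once the Remark is in hand.

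However, you have mis-identified the two cosets. You assert $N(\OF)K^0(\p)=\{k:v(c)\ge 1\}$ and $wK^0(\p)=\{k:v(c)=0\}$, but since $K^0(\p)$ is defined by the \emph{upper-right} entry lying in $\p$, the correct characterization is $N(\OF)K^0(\p)=\{k\in K:d\in\OF^\times\}$ and $wK^0(\p)=\{k\in K:d\in\p\}$ (for $d\in\OF^\times$ one has $n(-bd^{-1})k\in K^0(\p)$; for $d\in\p$, $w^{-1}k$ has upper-right entry $-d\in\p$). Your version fails on $k=\mat{1}{0}{1}{1}$: here $c$ is a unit, so you would place $k$ in $wK^0(\p)$ and predict $l(ka(\varpi^{n_1}))\le n_0$; but $d=1$ is also a unit, the bottom row $(\varpi^{n_1},1)$ of $ka(\varpi^{n_1})$ is already primitive, and one reads off $l=n_1>n_0$. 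Once you replace the dichotomy ``$v(c)\ge 1$ vs.\ $v(c)=0$'' by ``$d\in\OF^\times$ vs.\ $d\in\p$'', your bottom-row computation goes through cleanly: if $d\in\OF^\times$ then $(c\varpi^{n_1},d)$ is primitive and $l=\min(v(c)+n_1,n)\ge n_1$; if $d\in\p$ then $c\in\OF^\times$ (from $\det k\in\OF^\times$), and scaling the bottom row by $\varpi^{-\min(v(d),n_1)}$ yields a primitive vector whose first entry has valuation $\max(n_1-v(d),0)\le n_1-1=n_0$.
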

\begin{proof} We first assume that $l(ka(\varpi^{n_1})) \ge n_1$ and prove that $k \in BK^0(\p)$. For brevity, put $l = l(ka(\varpi^{n_1}))$. So we can write $ka(\varpi^{n_1}) = bwn(\varpi^{-l}v) k'$ where $b \in B$, $k' \in K_1(\p^{n})$ and $n \ge l \ge n_1$. Therefore $k = b' w n(\varpi^{n_1-l}v) k_2$ where  $k_2 = a(\varpi^{n_1})k'a(\varpi^{-n_1}) \in K^0(\p^{n_1})$ and $b' \in B$. So to complete the proof that $k \in BK^0(\p)$, it suffices to check that there exists a matrix $b_2 \in B$ such that $b_2wn(\varpi^{n_1-l}v) \in K^0(\p)$. By explicit verification, $b_2=\left[\begin{smallmatrix}
\varpi^{n_1-l}v&1\\0&\varpi^{l-n_1}v^{-1}
\end{smallmatrix}\right]$ works.
Having proved that $k \in BK^0(\p)$, it follows immediately that $k \in B(\OF)K^0(\p) = N(\OF)K^0(\p)$.

The proof that $l(ka(\varpi^{n_1})) \le n_0$ implies $k \in wK^0(\p)$ is similar. The reverse implications follow using  $N(\OF)K^0(\p) \ \cap\ wK^0(\p)=\emptyset.$ 
\end{proof}

\begin{lemma}\label{lemma:ALnew}Suppose that $k \in K_0(\p)$, $n$ is odd, and $g \in \{1, \mat{}{1}{\varpi}{} \}.$ Then   $kgwa(\varpi^{n_1}) = k' a(\varpi^{n_1}) g' z$ where $k' \in K$, $l(k'a(\varpi^{n_1})) \le n_0$, $g' \in \{1, \mat{}{1}{\varpi^n}{}\}$, and $z \in Z$.
\end{lemma}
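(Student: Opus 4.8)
The plan is to treat the two values of $g$ separately and in each case simply write $kgwa(\varpi^{n_1})$ out as a $2\times 2$ matrix, then read off $k'$, $g'$ and $z$ by inspection. Conceptually this lemma is a commutation identity: the left factor $g\in\{1,\,\mat{}{1}{\varpi}{}\}$ — an Atkin--Lehner-type element at level $\p$ — when pushed past $wa(\varpi^{n_1})$, produces on the right the corresponding Atkin--Lehner-type element $g'\in\{1,\,\mat{}{1}{\varpi^n}{}\}$ at level $\p^{n}$, up to a $K$-factor and a central factor; the hypothesis that $n$ is odd enters only through the bookkeeping $n_1=n_0+1$, $n=n_0+n_1$.

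Before starting I would reformulate the one nontrivial requirement $l(k'a(\varpi^{n_1}))\le n_0$. By Lemma~\ref{usefull}(2), for $k'\in K$ this is equivalent to $k'\in wK^0(\p)$; and since $w^{-1}\mat{A}{B}{C}{D}=\mat{-C}{-D}{A}{B}$ automatically lies in $K$ whenever $\mat{A}{B}{C}{D}$ does, the condition $k'\in wK^0(\p)$ is in turn equivalent to the lower-right entry of $k'$ lying in $\p$. So the whole task reduces to: produce a factorization $kgwa(\varpi^{n_1})=k'a(\varpi^{n_1})g'z$ with $k'\in K$, $g'\in\{1,\,\mat{}{1}{\varpi^n}{}\}$, $z\in Z$, and the lower-right entry of $k'$ in $\p$. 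Throughout I would write $k=\mat{a}{b}{c}{d}$, so that $k\in K_0(\p)$ means $a,b,c,d\in\OF$, $c\in\p$, and $ad-bc\in\OF^\times$.

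For $g=1$ there is nothing to do: $kwa(\varpi^{n_1})=(kw)a(\varpi^{n_1})$, so I would take $k'=kw=\mat{-b}{a}{-d}{c}$, $g'=1$, $z=1$; then $k'\in K$ and its lower-right entry is $c\in\p$. For $g=\mat{}{1}{\varpi}{}$ I would first record $\mat{}{1}{\varpi}{}w=\mat{-1}{}{}{\varpi}$, which gives
\[
kgwa(\varpi^{n_1})=k\mat{-1}{}{}{\varpi}a(\varpi^{n_1})=\mat{-a\varpi^{n_1}}{b\varpi}{-c\varpi^{n_1}}{d\varpi}.
\]
I then claim this equals $k'a(\varpi^{n_1})g'z$ with $k'=\mat{b}{-a}{d}{-c}$, $g'=\mat{}{1}{\varpi^n}{}$ and $z=z(\varpi^{-n_0})$, which I would verify by multiplying the right-hand side out: $a(\varpi^{n_1})g'z(\varpi^{-n_0})=\mat{}{\varpi}{\varpi^{n_1}}{}$ (using $n_1-n_0=1$ and $n-n_0=n_1$), and left-multiplying by $\mat{b}{-a}{d}{-c}$ reproduces the displayed matrix. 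Finally $k'\in K$ because all its entries are in $\OF$ and $\det k'=ad-bc\in\OF^\times$, its lower-right entry is $-c\in\p$, and $g'$, $z$ have the required shapes, so the lemma follows.

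I do not expect a real obstacle; the only point needing a moment's thought is the second case, where one must realize that $g'$ has to be the anti-diagonal element $\mat{}{1}{\varpi^n}{}$ rather than $1$ (a quick valuation count on determinants shows that $v(\det kgwa(\varpi^{n_1}))=n_1+1$ is odd relative to $n_1$, so no $k'a(\varpi^{n_1})z$ with $k'\in K$ can match it), and that the central factor is then forced to be $z(\varpi^{-n_0})$ by the determinant of $k'$. Once those choices are pinned down, the fact that the resulting $k'$ has its lower-right entry in $\p$ is nothing but a restatement of the hypothesis $k\in K_0(\p)$, so the assumption $k\in K_0(\p)$ — as opposed to merely $k\in K$ — is used in an essential way.
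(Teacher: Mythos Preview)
Your proof is correct and follows essentially the same approach as the paper: a direct matrix computation in each of the two cases, followed by an appeal to Lemma~\ref{usefull}(2) to convert the condition $l(k'a(\varpi^{n_1}))\le n_0$ into the statement $k'\in wK^0(\p)$. The only cosmetic difference is that the paper avoids writing out the entries of $k$ by packaging the answer as $k'=w(w^{-1}kw)$ (with $z=z(-\varpi^{-n_0})$ in the second case, differing from your $z(\varpi^{-n_0})$ by the sign absorbed into your choice of $k'$), and then observes directly that $w^{-1}kw\in K^0(\p)$ since $k\in K_0(\p)$.
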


\begin{proof}If $g=1$, then $kgwa(\varpi^{n_1}) = w(w^{-1}kw) a(\varpi^{n_1})$. If $g=\mat{}{1}{\varpi}{}$, then $kgwa(\varpi^{n_1}) = w(w^{-1}kw) a(\varpi^{n_1})\mat{}{1}{\varpi^n}{} z(-\varpi^{n_1-n})$. The result follows from Lemma \ref{usefull} as $(w^{-1}kw) \in K^0(\p)$.
\end{proof}

\begin{lemma}\label{lemmatg}Suppose that $g \in K a(\varpi^{n_1})$. Then $t(g) = \min(n_1-2l(g), -n_1)$.
\end{lemma}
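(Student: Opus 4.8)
The plan is to reduce the claim, via the Remark following the double coset decomposition \eqref{e:coset}, to a single explicit Iwasawa computation. Write $g = ka(\varpi^{n_1})$ with $k = \mat{a}{b}{c}{d} \in K$, so that $g = \mat{a\varpi^{n_1}}{b}{c\varpi^{n_1}}{d}$ and $v(\det g) = n_1$ (since $\det k \in \OF^\times$). According to that Remark, for \emph{any} Iwasawa decomposition $g \in ZNa(y)k'$ with $k' = \mat{a'}{b'}{c'}{d'} \in K$ one has $l(g) = \min(v(c'), n)$ and $t(g) = v(y) - 2l(g)$; so it suffices to exhibit one such decomposition and read off $v(y)$ and $v(c')$.

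The next step is to produce such a decomposition by clearing the denominator of the bottom row $(c\varpi^{n_1}, d)$ of $g$. If $g = z(s)n(x)a(y)k'$, then the bottom row of $g$ equals $s\cdot(c',d')$ with $(c',d')$ unimodular, which forces $v(s) = \min(v(c)+n_1,\, v(d))$ and $v(c') = v(c)+n_1-v(s) = \max(0,\, v(c)+n_1-v(d))$; comparing determinants gives $2v(s)+v(y) = v(\det g) = n_1$, i.e.\ $v(y) = n_1 - 2v(s)$. Hence $t(g) = n_1 - 2v(s) - 2l(g)$ with $l(g) = \min(v(c)+n_1-v(s),\, n)$.

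It then remains to finish by a two-case analysis, using that $\det k \in \OF^\times$ forces $\min(v(c), v(d)) = 0$. If $c \in \OF^\times$, then $v(s) = \min(n_1, v(d)) \in \{0,1,\dots,n_1\}$, so $v(c') = n_1 - v(s) \in \{0,\dots,n_1\}$; since $n_1 \le n$ this gives $l(g) = n_1 - v(s)$ and $t(g) = n_1 - 2v(s) - 2(n_1-v(s)) = -n_1$, while $n_1 - 2l(g) = 2v(s) - n_1 \ge -n_1$, so $\min(n_1 - 2l(g), -n_1) = -n_1 = t(g)$. If instead $c \in \p$, then $d \in \OF^\times$, so $v(s) = 0$, whence $l(g) = \min(v(c)+n_1,\, n) \ge \min(n_1+1,\, n) \ge n_1$ and $t(g) = n_1 - 2l(g) \le -n_1$, so $\min(n_1 - 2l(g), -n_1) = n_1 - 2l(g) = t(g)$. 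Either way $t(g) = \min(n_1-2l(g), -n_1)$, and the degenerate case $c = 0$ is subsumed in the second case with $v(c) = \infty$ and $l(g) = n$. I do not expect a genuine obstacle here; the only delicate points are checking that the formulas of the Remark are legitimately applied to the Iwasawa decomposition we build (they depend only on $v(y)$ and the valuation of the $(2,1)$-entry of the $K$-part, both unaffected by the $B\cap K$-ambiguity) and the bookkeeping around vanishing entries.
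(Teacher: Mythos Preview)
Your argument is correct and is precisely the explicit Iwasawa computation that the paper alludes to but omits (the paper's proof reads only ``This follows by an explicit computation similar to the proof of Lemma~\ref{usefull}. We omit the details.''). Your use of the Remark to read off $l(g)$ and $t(g)$ from the valuations of the bottom row of $g$ is exactly the intended approach, and the two-case split according to whether $c\in\OF^\times$ or $c\in\p$ cleanly matches the dichotomy in Lemma~\ref{usefull}.
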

\begin{proof}This follows by an explicit computation similar to the proof of Lemma \ref{usefull}. We omit the details.
\end{proof}

\subsection{Our goal}

It may be worthwhile to declare at this point  the output from the rest of Section \ref{sec:localcalcs} that will be needed for our main theorem.

In Sections \ref{s:localwhitbeg} -- \ref{s:localwhit}, we will study the local Whittaker newform $W_\pi$, which is a certain function on $G$. Given a compact subset $\J$ of $G$, we are interested in the following questions:
\begin{enumerate}
\item For each $g \in \J$, provide a good upper bound for the quantity $\sup\{|y|: W_\pi(a(y)g) \neq 0\}$.\footnote{This is essentially the local analogue of the quantity $Q(g_\f)$ described in the introduction.}

 \item Prove an average Ramanujan-type bound for the function   $|W_\pi(a(y)g)|$ whenever $g\in \J$ and $W_\pi(a(y)g) \neq 0$.
\end{enumerate}
For our global applications, it will be useful to have the set $\J$ to be relatively large (so that we can create a generating domain out of it with a relatively small archimedean component) while also making sure that the supremum of the upper bound above (as $g$ varies in $\J$) is fairly small (so as to optimize the Whittaker expansion bound). We will choose $\J$ to equal the set $K a(\varpi^{n/2})$ if $n$ is even and equal to $\{g \in K a(\varpi^{n_1}): l(g) \le n_0\}$ if $n$ is odd. For this set $\J$ we will answer the two questions above in Proposition \ref{localwsupportfinal} below. This proposition will be of key importance for our global Whittaker expansion bound.

Next, in Sections \ref{s:mcbeg} -- \ref{s:endmc}, we will study a certain test function $\Phi_\pi'$.  This test function, viewed as a convolution operator, is essentially idempotent, and therefore has exactly one non-zero positive eigenvalue. In Proposition \ref{keymatrixprop}, we determine the size of this non-zero eigenvalue, and we also prove that $a(\varpi^{n_1}) \cdot W_\pi$ is an eigenvector with this eigenvalue. This proposition will be of key importance for our global bound coming from the amplified trace formula.

In view of the technical material coming up, it is worth emphasizing that Propositions \ref{localwsupportfinal} and \ref{keymatrixprop} are the \emph{only} results from the rest of Section \ref{sec:localcalcs} that will be used in Section \ref{s:global}.

\subsection{The Whittaker newform}\label{s:localwhitbeg}
Fix an additive character $\psi : F \rightarrow S^1$
with conductor $\mathfrak{o}$. Then $\pi$ can be realized as a unique subrepresentation of the space of functions $W$ on $G$ satisfying $W(n(x)g) = \psi(x)W(g)$. This is the Whittaker model of $\pi$ and will be denoted $\mathcal{W}(\pi,\psi)$.

\begin{definition}\label{defn:normalized-W-newform}
  The \emph{normalized Whittaker newform}
  $W_\pi $
  is
  the unique function in $\mathcal{W}(\pi,\psi)$ invariant under $K_1(\p^{n})$
  that satisfies $W_\pi(1)= 1$.
\end{definition}

The following lemma is well-known and so we omit its proof.

\begin{lemma}\label{whitsuppdiagtriv}Suppose that $W_\pi(a(y)) \neq 0$. Then $|y| \le 1$, i.e., $y \in \OF$.
\end{lemma}

\begin{lemma}[Proposition 2.28 of~\cite{sahasupwhittaker}]\label{atkin-lehnerwhit}Let $\tilde{\pi}$ denote the contragredient representation of $\pi$. Let $t \in \Z$, $0\le l \le n$, $v \in \OF^\times$, and assume that\footnote{There is no loss of generality in this assumption as we can always twist $\pi$ by a character of the form $| \ |^{ir}$ to ensure this.}  $\omega_\pi(\varpi)=1$. We have $$
W_{\tilde{\pi}}(a(\varpi^t)wn(\varpi^{-l}v)) = \eps(1/2,  \pi) \omega_\pi(v)\psi(-\varpi^{t+l}v^{-1})  W_\pi(a(\varpi^{t+2l-n})wn(-\varpi^{l-n}v)).$$

\end{lemma}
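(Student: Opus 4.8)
The final statement is the local Atkin--Lehner relation tying the Whittaker newforms of $\pi$ and of its contragredient $\tilde\pi$, evaluated on the coset representatives of \eqref{e:coset}. The plan is to produce an explicit linear isomorphism $\mathcal{A}\colon\mathcal{W}(\pi,\psi)\to\mathcal{W}(\tilde\pi,\psi)$ carrying $W_{\pi}$ to a scalar multiple of $W_{\tilde\pi}$, to identify the scalar with (a sign times) $\eps(1/2,\pi)$, and then to read the formula off by an Iwasawa computation. For $W\in\mathcal{W}(\pi,\psi)$ put $(\mathcal{A}W)(g):=W\bigl(w\,{}^{t}g^{-1}a(\varpi^{n})\bigr)$. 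Using the identity $w\,{}^{t}n(x)^{-1}w^{-1}=n(x)$ one checks that $(\mathcal{A}W)(n(x)g)=\psi(x)(\mathcal{A}W)(g)$, and since $g\mapsto{}^{t}g^{-1}$ is an automorphism of $G$ with $\tilde\pi\cong\pi\circ({}^{t}\cdot^{-1})$, the span $\{\mathcal{A}W\}$ is stable under right translation and, by uniqueness of the Whittaker model, equals $\mathcal{W}(\tilde\pi,\psi)$. The decisive point is the one-line matrix identity $a(\varpi^{-n})\bigl({}^{t}K_{1}(\p^{n})\bigr)a(\varpi^{n})=K_{1}(\p^{n})$, which forces $\mathcal{A}W_{\pi}$ to be right $K_{1}(\p^{n})$-invariant inside $\mathcal{W}(\tilde\pi,\psi)$; since $a(\tilde\pi)=a(\pi)=n$, that fixed space is one-dimensional, so $\mathcal{A}W_{\pi}=\kappa\,W_{\tilde\pi}$ for some scalar $\kappa$.

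To identify $\kappa$, I would compare the local Jacquet--Langlands $\GL_{2}\times\GL_{1}$ zeta integrals of the newforms $W_{\pi}$ and $W_{\tilde\pi}=\kappa^{-1}\mathcal{A}W_{\pi}$ through the local functional equation: the newform integral computes the standard $L$-factor, the dual integral of $\mathcal{A}W_{\pi}$ unwinds via the change of variables defining $\mathcal{A}$, and with $\psi$ of conductor $\OF$ and $\omega_{\pi}(\varpi)=1$ --- which is exactly what the hypothesis of the lemma provides --- the $\gamma$-factor and $L$-factors collapse to leave $\kappa=\omega_{\pi}(-1)\,\eps(1/2,\pi)$. (One may instead regard this as the normalization of $\eps(1/2,\pi)$ used in \cite{sahasupwhittaker} and merely check consistency; a convenient check is $t=l=0$, $v=1$, where, using $\psi(-1)=1$, both sides reduce to $W_{\tilde\pi}(wn(1))=\eps(1/2,\pi)\,W_{\pi}(a(\varpi^{-n})wn(-\varpi^{-n}))$.)

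Finally, evaluate at $g=a(\varpi^{t})wn(\varpi^{-l}v)=\mat{0}{\varpi^{t}}{-1}{-\varpi^{-l}v}$. A direct computation gives
$$w\,{}^{t}g^{-1}a(\varpi^{n})=\mat{-\varpi^{n}}{0}{\varpi^{n-t-l}v}{-\varpi^{-t}}=z(-\varpi^{-t})\mat{\varpi^{n+t}}{0}{-\varpi^{n-l}v}{1}.$$
Feeding this matrix into the Iwasawa description recorded after \eqref{e:coset} (namely $l(h)=\min(v(c),n)$ and $t(h)=v(y)-2l(h)$ for $h\in ZNa(y)k$, $k=\mat{a}{b}{c}{d}\in K$) shows its invariants are $t(\cdot)=t+2l-n$ and $l(\cdot)=n-l$, so it lies in the double coset of $a(\varpi^{t+2l-n})wn(-\varpi^{l-n}v)$; carrying out the factorization $z\,n(x)\,a(\varpi^{t+2l-n})wn(-\varpi^{l-n}v)\,k$ with $k\in K_{1}(\p^{n})$ explicitly, the central factor contributes (via $\omega_{\pi}(\varpi)=1$) the scalar $\omega_{\pi}(-1)\omega_{\pi}(v)$ and the unipotent factor contributes $\psi(-\varpi^{t+l}v^{-1})$. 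Since $W_{\pi}$ is left $Z$-equivariant by $\omega_{\pi}$, left $N$-equivariant by $\psi$, and right $K_{1}(\p^{n})$-invariant, combining this with $\mathcal{A}W_{\pi}=\kappa W_{\tilde\pi}$ and $\kappa=\omega_{\pi}(-1)\eps(1/2,\pi)$ yields exactly the claimed identity; for $l\in\{0,n\}$ the fibre $\OF^{\times}/U_{\min(l,n-l)}$ is trivial, and for $0<l<n$ the residual unit in the factorization is congruent to $v$ modulo $U_{\min(l,n-l)}$, which is all that is needed.

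The only substantive obstacle is the constant in Step~2: correctly matching $\kappa$ to the analytic root number $\eps(1/2,\pi)$ (rather than to some other unit), which is where the local functional equation does the real work and where the normalization $\omega_{\pi}(\varpi)=1$ is genuinely needed. Steps~1 and 3 are routine but must be done carefully, watching signs, the possibly negative valuation $v(\varpi^{-t})$, and the parametrization of the fibres $\OF^{\times}/U_{\min(l,n-l)}$ in \eqref{e:coset}; the case $t=l=0$, $v=1$ is a useful running sanity check.
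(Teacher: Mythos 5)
The paper gives no proof of this lemma---it is quoted verbatim from Proposition 2.28 of the reference \cite{sahasupwhittaker}---so I cannot compare against an ``official'' argument; I can only assess the internal correctness of your proposal.

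Your overall strategy is sound: build the intertwiner $\mathcal{A}W(g)=W(w\,{}^{t}g^{-1}a(\varpi^{n}))$, note that $W\mapsto W(w\,{}^t(\cdot)^{-1})$ sends $\mathcal{W}(\pi,\psi)$ to $\mathcal{W}(\tilde\pi,\psi)$ and that the extra right translation by $a(\varpi^{-n})$ preserves $K_1(\p^n)$-invariance (the identity $a(\varpi^{-n})\,{}^tK_1(\p^n)\,a(\varpi^{n})=K_1(\p^{n})$ is correct), so newform uniqueness gives $\mathcal{A}W_\pi=\kappa\,W_{\tilde\pi}$. Your Iwasawa/coset computation in Step~3 is also correct: one indeed finds $w\,{}^tg^{-1}a(\varpi^n)=z(-\varpi^{-t})\,z(\varpi^{n-l}v)\,n(-\varpi^{t+l}v^{-1})\,a(\varpi^{t+2l-n})\,w\,n(-\varpi^{l-n}v)\,k$ with $k=\mathrm{diag}(1,v^{-2})\in K_1(\p^n)$, which together with $\omega_\pi(\varpi)=1$ yields
$(\mathcal{A}W_\pi)(a(\varpi^t)wn(\varpi^{-l}v))=\omega_\pi(-1)\,\omega_\pi(v)\,\psi(-\varpi^{t+l}v^{-1})\,W_\pi(a(\varpi^{t+2l-n})wn(-\varpi^{l-n}v)).$

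The gap is in Step~2. Combining the display above with $\mathcal{A}W_\pi=\kappa W_{\tilde\pi}$ gives $W_{\tilde\pi}(g)=\kappa^{-1}\omega_\pi(-1)\cdot(\cdots)$, so matching the Lemma's statement forces $\kappa=\omega_\pi(-1)\,\eps(1/2,\pi)^{-1}=\eps(1/2,\tilde\pi)$, \emph{not} $\omega_\pi(-1)\,\eps(1/2,\pi)$ as you assert. Your own ``convenient check'' at $t=l=0$, $v=1$ already exhibits the inconsistency: that display, together with your Step~3 evaluation $W_{\tilde\pi}(wn(1))=\kappa^{-1}\omega_\pi(-1)W_\pi(a(\varpi^{-n})wn(-\varpi^{-n}))$, again pins $\kappa=\omega_\pi(-1)\eps(1/2,\pi)^{-1}$. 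So either the functional-equation bookkeeping in Step~2 has an inverted $\gamma$-factor, or you are silently using a different normalization of $\eps$ than the paper; either way, with $\kappa$ as you wrote it your chain of deductions produces the Lemma with $\eps(1/2,\pi)^{-1}$ in place of $\eps(1/2,\pi)$, which is wrong whenever $\eps(1/2,\pi)^2\neq 1$. Since you yourself flag ``matching $\kappa$ to the analytic root number'' as the only substantive obstacle, this is precisely the point that needs to be done carefully, and as written it is not.
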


Define $g_{t,l,v}:= a(\varpi^t)wn(\varpi^{-l}v)$. Let $\tilde{X}$ denote the group of characters $\mu$ of $F^\times$ such that $\mu(\varpi)=1$.  For each  $\mu \in \tilde{X}$, and each $x \in F$, define the Gauss sum $G(x, \mu) = \int_{\OF^\times}\psi(xy) \mu(y) d^\times y.$

We will need two additional results for the results of the next subsection.
The first one is a key formula from~\cite{sahasupwhittaker}.

\begin{lemma}[Prop. 2.23 of \cite{sahasupwhittaker}] Assume that $\omega_\pi \in \tilde{X}$. We have $$W_\pi(g_{t,l,v}) = \sum_{\substack{\mu: a(\mu)\le l,\\ \mu \in \tilde{X}}} c_{t,l}(\mu) \mu(v),$$  where the coefficients  $c_{t,l}(\mu)$ can be read off from the following identity

\begin{equation}\label{basicid}\begin{split}&\varepsilon(\frac12, \mu\pi)\left(\sum_{t=-\infty}^\infty q^{(t+a(\mu\pi))(\frac12-s)} c_{t,l}(\mu)\right) L(s, \mu \pi)^{-1} \\&= \omega_\pi(-1)\left(\sum_{a=0}^\infty W_\pi(a(\varpi^a)) q^{-a(\frac12-s)}G(\varpi^{a-l}, \mu^{-1})\right) L(1-s, \mu^{-1}\omega_\pi^{-1}\pi)^{-1}.\end{split}\end{equation}
\end{lemma}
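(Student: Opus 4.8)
The statement to be proved is the identity \eqref{basicid}, which expresses the Whittaker newform values $W_\pi(g_{t,l,v})$ on the ``big cell'' cosets in terms of the diagonal values $W_\pi(a(\varpi^a))$ together with $\varepsilon$- and $L$-factors of twists $\mu\pi$. Since this is quoted as Prop.\ 2.23 of \cite{sahasupwhittaker}, the job here is to reconstruct the proof. The natural strategy is to use the \emph{local functional equation} for $\GL_2 \times \GL_1$ zeta integrals applied to the twisted representation $\mu\pi$, combined with the Fourier-type expansion of $W_\pi(g_{t,l,v})$ in the variable $v \in \OF^\times$.

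\textbf{Step 1: the Fourier expansion in $v$.} First I would observe that for fixed $t$ and $l$, the function $v \mapsto W_\pi(g_{t,l,v})$ on $\OF^\times$ is right-invariant under $U_{\min(l,n-l)} = U_l$ (using $l \le n_1$, say, after reducing via Lemma~\ref{atkin-lehnerwhit}; more precisely it is constant on cosets of $U_l$ by the $K_1(\p^n)$-invariance of $W_\pi$ and an explicit matrix computation with $n(\varpi^{-l}v)$). Hence it admits a finite expansion $W_\pi(g_{t,l,v}) = \sum_{a(\mu)\le l} c_{t,l}(\mu)\mu(v)$ over characters $\mu$ of $F^\times/ (\varpi^\Z U_l)$, i.e.\ over $\mu \in \tilde X$ with $a(\mu) \le l$; this defines the coefficients $c_{t,l}(\mu)$. (One also checks the sum is supported only on $t$ bounded below, using Lemma~\ref{whitsuppdiagtriv} and the Iwasawa decomposition, so the Laurent series in $q^{-s}$ makes sense.)

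\textbf{Step 2: plug into the local zeta integral and unfold.} Next, for a character $\mu \in \tilde X$, consider the $\GL_2\times\GL_1$ local zeta integral $Z(s,W_\pi,\mu) = \int_{F^\times} W_\pi(a(y))\mu(y)|y|^{s-1/2}\,d^\times y$ and its ``$w$-twisted'' counterpart $Z(s, \rho(w)W_\pi, \mu^{-1}\omega_\pi^{-1})$, where $\rho$ denotes right translation. The local functional equation states $Z(1-s,\rho(w)W_\pi,\mu^{-1}\omega_\pi^{-1}) = \gamma(s,\mu\pi,\psi)\,\omega_\pi(-1)\,Z(s,W_\pi,\mu)$ with $\gamma(s,\mu\pi,\psi) = \varepsilon(s,\mu\pi,\psi)L(1-s,\widetilde{\mu\pi})/L(s,\mu\pi)$. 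The left side I would unfold: writing $a(y)w = a(\varpi^t)wn(\varpi^{-l}v)\cdot(\text{unipotent and }K\text{ pieces})$ — essentially inserting the coset decomposition and using the $U_m$-invariance — one sees $Z(1-s,\rho(w)W_\pi,\cdots)$ picks out exactly the sum $\sum_t q^{t(\frac12 - s)} c_{t,l}(\mu)$, up to a shift by $a(\mu\pi)$ coming from normalizing the $\varepsilon$-factor, and up to the Gauss-sum factors $G(\varpi^{a-l},\mu^{-1})$ that appear when one integrates $\mu$ against the $v$-dependence. The right side is manifestly $\sum_a W_\pi(a(\varpi^a)) q^{-a(\frac12-s)}$ convolved with the same Gauss sums, times $\omega_\pi(-1)$, divided by $L(1-s,\mu^{-1}\omega_\pi^{-1}\pi)$ after moving the $\gamma$-factor's $L$-denominators. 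Matching the two sides gives precisely \eqref{basicid}. I would extract $c_{t,l}(\mu)$ by comparing coefficients of $q^{t(\frac12-s)}$.

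\textbf{Main obstacle.} The delicate point is bookkeeping the Gauss sums and the index shift by $a(\mu\pi)$ cleanly: one must track how integrating the additive character $\psi$ (appearing through $n(\varpi^{-l}v)$ and the Kirillov-model action) against the multiplicative character $\mu^{-1}$ over $\OF^\times$ produces $G(\varpi^{a-l},\mu^{-1})$, and how the normalization of the local $\varepsilon$-factor of $\mu\pi$ (which depends on $a(\mu\pi)$ and on the choice of $\psi$ having conductor $\OF$) shows up as the $q^{(t+a(\mu\pi))(\frac12-s)}$ weighting rather than $q^{t(\frac12-s)}$. Getting the conductor-exponent arithmetic and the sign $\omega_\pi(-1)$ exactly right — rather than off by a factor of $\omega_\pi(\varpi)$ or a power of $q$ — is where the care is needed; everything else is a formal manipulation of the $\GL_2$ functional equation. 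Since the result is cited verbatim from \cite{sahasupwhittaker}, in the paper it would suffice to refer there, but the above is how one would verify it.
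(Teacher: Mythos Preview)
The paper gives no proof of this lemma; it simply cites Prop.~2.23 of \cite{sahasupwhittaker}. Your overall strategy (local $\GL_2\times\GL_1$ functional equation plus Fourier expansion in $v$) is the right one, but Step~2 contains a genuine gap. You propose to apply the functional equation to $W_\pi$ and $\rho(w)W_\pi$ directly, and then claim that unfolding $Z(1-s,\rho(w)W_\pi,\cdots)$ via ``$a(y)w=g_{t,l,v}\cdot(\text{unipotent and }K\text{ pieces})$'' produces the coefficients $c_{t,l}(\mu)$ for arbitrary $l$. This is not correct: the element $a(y)w$ already lies in the coset with $l=0$ (indeed $a(\varpi^t u)w=a(u)g_{t,0,1}$ for $u\in\OF^\times$), so that zeta integral only sees $c_{t,0}(\mu)$, and no amount of coset bookkeeping will make a general $l$ appear.

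The fix is to apply the functional equation to the \emph{translated} vector $W':=\pi(wn(\varpi^{-l}))W_\pi$. On one side, $Z(s,W',\mu)=\int W_\pi(a(y)wn(\varpi^{-l}))\mu(y)|y|^{s-1/2}d^\times y$; writing $y=\varpi^t u$ and using that $a(u)g_{t,l,1}=g_{t,l,u^{-1}}\cdot\left[\begin{smallmatrix}1&\\&u\end{smallmatrix}\right]$ with $\left[\begin{smallmatrix}1&\\&u\end{smallmatrix}\right]\in K_1(\p^n)$, this unfolds to $\sum_t q^{t(1/2-s)}c_{t,l}(\mu)$. On the other side, $\rho(w)W'=\omega_\pi(-1)\pi(n(\varpi^{-l}))W_\pi$, and since $W_\pi(a(y)n(\varpi^{-l}))=\psi(y\varpi^{-l})W_\pi(a(y))$ together with $W_\pi(a(\varpi^a u))=\omega_\pi(u)W_\pi(a(\varpi^a))$, its zeta integral against $\mu^{-1}\omega_\pi^{-1}$ yields exactly $\omega_\pi(-1)\sum_a W_\pi(a(\varpi^a))q^{-a(1/2-s)}G(\varpi^{a-l},\mu^{-1})$. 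Inserting $\varepsilon(s,\mu\pi)=\varepsilon(\tfrac12,\mu\pi)q^{a(\mu\pi)(1/2-s)}$ then gives \eqref{basicid} on the nose. So the missing ingredient is: translate the newvector by $wn(\varpi^{-l})$ \emph{before} forming the zeta integral.
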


The next result deals with conductors of character twists. While the proof is quite easy, it involves a question that comes up frequently in such problems, see, e.g. Remark 1.9 of~\cite{NPS}.
\begin{lemma}\label{lemmatwist}Let $l \le n_0$ be a non-negative integer. For each character $\mu$ with $a(\mu)=l$, we have $a(\mu \pi) \le \max(n, \ l+m).$ Furthermore, for each $r \ge 0$,  $$\left| \{\mu \in \tilde{X} : a(\mu)=l, \ a(\mu \pi) = \max(n, \  l+m) - r\} \right| \le q^{l - \frac{r}2}.$$
\end{lemma}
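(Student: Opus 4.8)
The statement has two parts: an upper bound $a(\mu\pi)\le\max(n,l+m)$ for twists, and a counting bound for how many $\mu$ of conductor exactly $l$ produce a near-maximal twist conductor. My plan is to treat these separately, but both rest on the same elementary fact about conductors of products of characters on $F^\times$: if $\mu_1,\mu_2$ are characters with $a(\mu_1)\ne a(\mu_2)$, then $a(\mu_1\mu_2)=\max(a(\mu_1),a(\mu_2))$, while if $a(\mu_1)=a(\mu_2)=j$ then $a(\mu_1\mu_2)\le j$, with the deficit governed by how well $\mu_1$ and $\mu_2^{-1}$ agree on $U_i/U_j$. I would first reduce to the case $\omega_\pi\in\tilde X$ (as permitted by the footnote on twisting), so $m=a(\omega_\pi)$.

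\textbf{The upper bound.} I would split according to the type of $\pi$. If $\pi$ is a principal series $\chi_1\boxplus\chi_2$, then $\mu\pi=\mu\chi_1\boxplus\mu\chi_2$, so $a(\mu\pi)=a(\mu\chi_1)+a(\mu\chi_2)$. Using $a(\chi_1)+a(\chi_2)=n$, $a(\omega_\pi)=a(\chi_1\chi_2)=m$, and $a(\mu)=l$, one checks in each sub-case (one $\chi_i$ unramified; both ramified with equal or unequal conductors) via the elementary fact above that $a(\mu\chi_i)\le\max(a(\chi_i),l)$ and moreover that the sum cannot exceed $\max(n,l+m)$ — the point being that if $l$ exceeds both $a(\chi_1),a(\chi_2)$ then $a(\mu\chi_1)=a(\mu\chi_2)=l$ is impossible to achieve simultaneously at full strength unless $a(\mu\chi_1 \mu\chi_2)=a(\mu^2\omega_\pi)$ is large, and $a(\mu^2\omega_\pi)\le\max(l,m)$ (when $2$ is a unit; a small separate argument for residue characteristic $2$) forces $a(\mu\pi)\le l+\max(l,m)$... here I would need to be careful and instead argue directly: $a(\mu\chi_1)\le\max(l,a(\chi_1))$ and $a(\mu\chi_2)\le\max(l,a(\chi_2))$ always, and when both are $=l$ we gain because $a((\mu\chi_1)(\mu\chi_2))=a(\mu^2\omega_\pi)$. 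For $\pi=\chi\,\mathrm{St}$ with $\chi$ unramified, $n=1$, $m=0$, and $\mu\pi=\mu\chi\,\mathrm{St}$ has conductor $\le 2a(\mu)=2l\le\max(1,l)$ when $l\le n_0=0$, i.e. the range is vacuous or $l=0$; for $\chi$ ramified, $n=2a(\chi)$, $m=2a(\chi)$ if... actually here $m=a(\chi^2)$ which may be smaller, and $a(\mu\pi)=2a(\mu\chi)\le 2\max(l,a(\chi))=\max(n,2l)\le\max(n,l+m)$ once one observes $m\ge 2l-n$... I would organize this as a short case table. The supercuspidal case is the subtlest: here I would use that $a(\mu\pi)\le 2\max(a(\mu),a(\pi)/2)$ fails in general, so instead invoke the conductor-of-twist inequality $a(\mu\pi)\le\max(a(\pi),2a(\mu)+a(\omega_\pi))$ — no — the correct tool is: for any $\pi$, $a(\mu\pi)\le n + 2l$ always, and the sharper $\max(n,l+m)$ comes from the depth/ramification-break structure. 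This is where I would lean on the stationary-phase / Gauss-sum description implicit in equation \eqref{basicid}, or better, cite the standard fact (as the lemma's remark hints, cf. Remark 1.9 of \cite{NPS}) that $a(\mu\pi)=\max(n,l+m)$ whenever $l>n_0$ ("$\mu$ highly ramified relative to $\pi$"), with the inequality $\le$ holding in all cases.

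\textbf{The counting bound.} Fix $l\le n_0$ and $r\ge 0$; I want to bound the number of $\mu\in\tilde X$ with $a(\mu)=l$ and $a(\mu\pi)=\max(n,l+m)-r$. The set of all $\mu\in\tilde X$ with $a(\mu)\le l$ is a group of size $q^{l}$ (it is $(\OF^\times/U_l)^\vee$ restricted to $\tilde X$; since $\varpi$ is killed, its cardinality is $|\OF^\times/U_l|=q^{l}$ — here I need the normalization that $|U_0/U_l|=q^{l-1}(q-1)/\dots$; actually $|\OF^\times/U_l|=q^{l-1}(q-1)$ for $l\ge1$, so the count of characters is that, which is $\le q^l$). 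Now the key structural input: whenever two such $\mu,\mu'$ satisfy $a(\mu(\mu')^{-1})\le l-k$, they produce the \emph{same} value of $a(\mu\pi)$ down to level... no. The right statement: the map $\mu\mapsto a(\mu\pi)$ is constant on cosets of the subgroup $\{\nu: a(\nu)\le$ (something)$\}$; more precisely, if $\mu'=\mu\nu$ with $a(\nu)$ small compared to the "break" of $\pi$ at which $\mu$ interacts, then $a(\mu'\pi)=a(\mu\pi)$. I would make this precise by writing, via \eqref{basicid} or via Tunnell/Langlands-type conductor formulas, that $a(\mu\pi)$ only depends on $\mu\bmod U_{?}$ at a level that drops by $1$ for each unit drop in $r$. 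Concretely: the number of $\mu$ of conductor exactly $l$ with $a(\mu\pi)\ge\max(n,l+m)-r$ should be $\le$ (number of cosets of $\widehat{U_{l-\lfloor r/?\rfloor}}$) $\le q^{l-r/2}$, and intersecting with the conductor-exactly-$l$ condition only shrinks it. The factor $1/2$ in the exponent $q^{l-r/2}$ strongly suggests a \emph{quadratic} Gauss-sum cancellation: the twist conductor is governed by $a(\mu^2\omega_\pi)$ or by a quadratic form in the "top coefficient" of $\mu$, so that decreasing $a(\mu\pi)$ by $r$ forces the top $\sim r/2$ digits of $\mu$ to satisfy a polynomial (degree-$2$) constraint, cutting the count by $q^{r/2}$.

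\textbf{Main obstacle.} The genuine difficulty is the supercuspidal case of the counting bound and the precise bookkeeping of the $1/2$ in the exponent. For principal series the whole lemma is a direct computation with the additivity $a(\mu\pi)=a(\mu\chi_1)+a(\mu\chi_2)$, and the $q^{l-r/2}$ emerges because achieving $a(\mu\chi_i)=l$ for both $i$ simultaneously while $a(\mu^2\chi_1\chi_2)$ is forced down is a codimension-$\approx r/2$ condition on $\mu$. For supercuspidals one does not have such a clean formula; I would either (a) pass to the Whittaker-function side and read conductors off \eqref{basicid} together with $W_\pi(a(\varpi^a))$-support (Lemma \ref{whitsuppdiagtriv}) and Gauss-sum valuations $v(G(\varpi^{a-l},\mu^{-1}))$, which jump precisely when $a(\mu)=l$, or (b) invoke the known epsilon-factor/conductor formula $a(\mu\pi)=\max(n,\,2l+?)$ for $l$ large — but the "$?$" is exactly what I must pin down. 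I expect option (a) to work: the left side of \eqref{basicid} has its leading power of $q^{-s}$ determined by $a(\mu\pi)$, the right side by $a(\varpi^{a-l},\mu^{-1})$-Gauss sums whose nonvanishing requires $a(\mu)\le l$ and whose top-degree behavior in the unit $v$ is quadratic, yielding the claimed bound. Carrying this through carefully, uniformly over all four representation types and including residue characteristic $2$, is the real work; everything else is routine manipulation of conductors.
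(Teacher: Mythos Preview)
Your case-by-case plan via explicit conductor formulas is exactly the paper's approach for principal series and twists of Steinberg: it invokes $a(\chi_1\boxplus\chi_2)=a(\chi_1)+a(\chi_2)$ and $a(\chi\,\mathrm{St})=\max(2a(\chi),1)$ and nothing more. Your exposition of these cases is tangled but the underlying idea is correct and matches the paper.

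The genuine gap is the supercuspidal case. The paper's proof rests on one external input you do not name: \emph{Tunnell's theorem} \cite[Prop.~3.4]{tunnell78}, which says that for a minimal supercuspidal $\pi_0$ one has $a(\mu\pi_0)=\max(a(\pi_0),2a(\mu))$. Writing $\pi=\chi_0\otimes\pi_0$ with $\pi_0$ minimal, and using that $m\le n_0$ for supercuspidals (so $l+m\le 2n_0\le n$ and hence $\max(n,l+m)=n$), both the upper bound and the counting estimate fall out immediately: the set $\{\mu:a(\mu\chi_0)\le j\}$ has size $\ll q^j$, and the factor $2$ in Tunnell's formula is precisely what produces the exponent $l-r/2$.

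Your option (a), extracting $a(\mu\pi)$ from \eqref{basicid}, does not work. For supercuspidal $\pi$ the right side of \eqref{basicid} collapses to the constant $\omega_\pi(-1)G(\varpi^{-l},\mu^{-1})$, since $W_\pi(a(\varpi^a))$ is supported at $a=0$ and both $L$-factors equal $1$. Matching powers of $q^{-s}$ then tells you only that $c_{t,l}(\mu)$ is supported at the single point $t=-a(\mu\pi)$; it does not tell you what that point \emph{is}. Determining that support is exactly what the present lemma is needed for in the proof of Proposition~\ref{whitsuppsizeram}, so this route is circular. Your option (b) is Tunnell's theorem with the key formula left as ``?''.
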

\begin{proof}If $\pi$ is supercuspidal we have $l+m \le n$. Writing $\pi$ as a twist of a minimal supercuspidal, the result follows from Tunnell's theorem~\cite[Prop. 3.4]{tunnell78} on conductors of twists of supercuspidal representations. If $\pi$ is principal series, then it follows from the well-known formula $a(\chi_1 \boxplus \chi_2) = a(\chi_1) + a(\chi_2).$ If $\pi$ is a twist of the Steinberg representations, it follows from the formula $a(\chi \rm{St}) = \max(2 a(\chi), 1)$.
\end{proof}

\subsection{The support and average size of $W_\pi$}\label{s:localwhit}
In this subsection, we will prove an important technical result (Proposition~\ref{whitsuppsizeram}) about the size and support of $W_\pi$. This will then be combined with the results of the previous subsection to deduce Proposition \ref{localwsupportfinal} which will be needed for our global application.  To motivate all these results, we first recall a certain conjecture made in~\cite{sahasupwhittaker}.

\begin{conjecture}[Local Lindelof hypothesis for Whittaker newforms]\label{locallindelof}Suppose that $a(\omega_\pi) \le n_1$ (i.e., $m_1 = 0$). Then $$1 \ll \sup_{g \in G} |W_\pi(g)| \ll_\eps q^{n\eps}.$$
\end{conjecture}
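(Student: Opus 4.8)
We sketch a plan of attack on Conjecture~\ref{locallindelof}; as will become clear, the reduction is essentially formal and the entire content is a single character-sum estimate.

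The lower bound is trivial since $W_\pi(1)=1$. For the upper bound, first use the decomposition~\eqref{e:coset} together with the right $K_1(\p^{n})$-invariance, the left $(N,\psi)$-equivariance, and the central-character equivariance of $W_\pi$ to reduce to bounding $|W_\pi(g_{t,l,v})|$ for $g_{t,l,v}=a(\varpi^t)wn(\varpi^{-l}v)$, $t\in\Z$, $0\le l\le n$, $v\in\OF^\times$. After a harmless unramified twist arranging $\omega_\pi(\varpi)=1$, Lemma~\ref{atkin-lehnerwhit} identifies $|W_\pi(g_{t,l,v})|$ for $l\ge n_1$ with $|W_{\tilde\pi}(g_{t',\,n-l,\,v'})|$; since $a(\tilde\pi)=n$ and $a(\omega_{\tilde\pi})=m$, the hypothesis $m_1=0$ is inherited by $\tilde\pi$, so it suffices to treat $0\le l\le n_0$. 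The cases $n\le1$ are handled directly (the newform's values are given by explicit classical formulas and are bounded), so assume $n\ge2$. The point of the hypothesis $m_1=0$ is that, together with $n\ge2$, a brief case check over the classification of generic unitary $\pi$ shows that $L(s,\pi)=1$; hence, by the standard description of the newform's Whittaker function on the diagonal torus in terms of $L(s,\pi)$ together with Lemma~\ref{whitsuppdiagtriv}, one has $W_\pi(a(\varpi^a))=\delta_{a,0}$.

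Feeding $W_\pi(a(\varpi^a))=\delta_{a,0}$ into the basic identity~\eqref{basicid} collapses its right-hand side to $\omega_\pi(-1)\,G(\varpi^{-l},\mu^{-1})\,L(1-s,\mu^{-1}\omega_\pi^{-1}\pi)^{-1}$. Since $G(\varpi^{-l},\mu^{-1})=0$ unless $a(\mu)=l$ (the case $a(\mu)=0$, $l\le1$ being bounded and harmless), only characters $\mu$ of exact conductor $\p^l$ survive in $W_\pi(g_{t,l,v})=\sum_{a(\mu)\le l}c_{t,l}(\mu)\mu(v)$. For all but $O(1)$ such $\mu$ per level $l$ — those ``untwisting'' the ramification of $\pi$, which are absent entirely when $\pi$ is supercuspidal — both $L(s,\mu\pi)$ and $L(1-s,\mu^{-1}\omega_\pi^{-1}\pi)$ equal $1$, and then~\eqref{basicid} forces $c_{t,l}(\mu)$ to be supported at the single index $t=-a(\mu\pi)$, with $|c_{t,l}(\mu)|=|G(\varpi^{-l},\mu^{-1})|=q^{-l/2}$. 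The $O(1)$ exceptional $\mu$ satisfy $\sum_t|c_{t,l}(\mu)|\ll_\eps q^{-l/2+\eps}$ (using that the Satake parameters of the relevant twists have absolute value $\le q^{1/2-\delta}$ for a fixed $\delta>0$), hence contribute $\ll_\eps q^{\eps}$ to any single value $W_\pi(g_{t,l,v})$. Combining this with Lemma~\ref{lemmatwist} — which, since $l+m\le n_0+n_1=n$ under $m_1=0$, gives $a(\mu\pi)\le n$ and, writing $r:=n+t\in[0,n]$, at most $q^{\,l-r/2}$ characters $\mu$ with $a(\mu)=l$ and $a(\mu\pi)=-t$ — we arrive, up to an error $\ll_\eps q^\eps$, at
\[
W_\pi(g_{t,l,v}) \;=\; q^{-l/2}\!\!\sum_{\substack{\mu\in\tilde X,\ a(\mu)=l\\ a(\mu\pi)=-t}}\!\!\! u_\mu\,\mu(v), \qquad u_\mu := \omega_\pi(-1)\,\varepsilon(\tfrac12,\mu\pi)^{-1}\,q^{l/2}\,G(\varpi^{-l},\mu^{-1}),
\]
where every $|u_\mu|=1$ and there are at most $q^{\,l-r/2}\le q^l$ terms. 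Thus Conjecture~\ref{locallindelof} would follow from square-root cancellation in this sum: if
\[
\Bigl|\sum_{\substack{\mu\in\tilde X,\ a(\mu)=l\\ a(\mu\pi)=-t}} \varepsilon(\tfrac12,\mu\pi)^{-1}\,q^{l/2}\,G(\varpi^{-l},\mu^{-1})\,\mu(v)\Bigr| \;\ll_\eps\; q^{\,l/2+\eps}
\]
uniformly in $0\le l\le n_0$, $t\in[-n,0]$, $v\in\OF^\times$, then $|W_\pi(g_{t,l,v})|\ll_\eps q^\eps$, which is even stronger than the claimed $\ll_\eps q^{n\eps}$.

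The displayed estimate is the crux, and it is where I expect the genuine difficulty to lie: it demands square-root cancellation in a sum over multiplicative characters $\mu$ of $F^\times$ of exact conductor $\p^l$, restricted to the sub-family on which the twisted conductor $a(\mu\pi)$ takes a prescribed value, and weighted by the unit-modulus $\varepsilon$-factors $\varepsilon(\tfrac12,\mu\pi)$ and the Gauss sums $G(\varpi^{-l},\mu^{-1})$. When $\pi$ is a twist of Steinberg or a principal series, $\varepsilon(\tfrac12,\mu\pi)$ factors through Gauss sums of the characters $\mu\chi_i$, and unfolding these by $p$-adic stationary phase turns the sum into a classical character sum over $(\OF/\p^l)^\times$ that is (generically) amenable to Weil-type bounds. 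When $\pi$ is supercuspidal, however, $\varepsilon(\tfrac12,\mu\pi)$ is accessible only through the local Langlands parameter — essentially through $\varepsilon$-factors of $\Ind_{W_E}^{W_F}\!\bigl(\theta\cdot(\mu\circ\Norm_{E/F})\bigr)$ for a ramified quadratic extension $E/F$ — so the sum becomes a sum of Gauss sums over $E^\times$ restricted to the ``diagonal'' family pulled back from $F^\times$, and square-root cancellation for such thin sums is exactly the sort of statement that seems beyond current reach; in fact it can genuinely fail for sufficiently deep supercuspidals, which is the reason behind the caveat attached to~\eqref{conjbd}. What the method does yield unconditionally is the weaker $L^2$-averaged statement: by Plancherel on $(\OF/\p^l)^\times$,
\[
\frac{1}{|\OF^\times/U_l|}\sum_{v}\bigl|W_\pi(g_{t,l,v})\bigr|^2 \;=\; \sum_{\mu}|c_{t,l}(\mu)|^2 \;\ll_\eps\; q^{-r/2}+q^{-l+\eps}\;\ll\; q^\eps,
\]
which uses only the termwise value $|c_{t,l}(\mu)|=q^{-l/2}$ and the counting of Lemma~\ref{lemmatwist}, with no cancellation whatsoever; it is this averaged bound that underlies Proposition~\ref{localwsupportfinal} and is what is actually needed for the global sup-norm estimate.
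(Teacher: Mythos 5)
The statement you were given is labelled a \emph{Conjecture} in the paper, and the paper does not prove it. The paper explicitly says it ``seems to be quite hard as it implies square-root cancelation in sums of twisted $\GL_2$-$\varepsilon$-factors,'' and an ``Added in proof'' footnote in the introduction even flags that the conjecture may genuinely fail in some powerful-level cases. So there is no proof to compare against, and you were right not to manufacture one. Your reduction --- coset decomposition \eqref{e:coset}, a harmless unramified twist to put $\omega_\pi\in\tilde X$, Lemma~\ref{atkin-lehnerwhit} to restrict to $0\le l\le n_0$, the observation that $m_1=0$ and $n\ge2$ give $m\le n_1<n$ and hence $L(s,\pi)=1$, and the extraction of the surviving $a(\mu)=l$ terms from the basic identity \eqref{basicid} --- is exactly the machinery the paper builds in Sections~2.4--2.5, except that the paper deploys it to prove the \emph{averaged} statement (Claim~2 in the proof of Proposition~\ref{whitsuppsizeram}) rather than the pointwise conjecture. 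Your diagnosis of the crux (square-root cancellation over the sub-family $\{\mu:a(\mu)=l,\ a(\mu\pi)=-t\}$ weighted by $\varepsilon(\tfrac12,\mu\pi)$ and a Gauss sum, with the supercuspidal case inaccessible because the $\varepsilon$-factor is only reachable through Weil-group Gauss sums over a ramified quadratic extension) matches the paper's stated obstruction and is consistent with the caveat attached to \eqref{conjbd}. Your concluding $L^2$-on-$\OF^\times$ paragraph --- Plancherel, the termwise size $|c_{t,l}(\mu)|\asymp q^{-l/2}$, and the count from Lemma~\ref{lemmatwist}, with no cancellation at all --- is precisely Proposition~\ref{whitsuppsizeram}/Proposition~\ref{localwsupportfinal}, which is all the paper proves and all the global argument uses.

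One small technical caveat worth noting: for the $O(1)$ exceptional $\mu$ with $L(1-s,\mu^{-1}\omega_\pi^{-1}\pi)\ne1$ you invoke a uniform spectral gap $|\alpha|\le q^{1/2-\delta}$ for the Satake parameters of the relevant twists. The paper avoids relying on such uniformity: it passes from $c_{t,l}(\mu)$ to the finitely-supported $d_{t,l}(\mu)$ via \eqref{basicid2}, bounds the $d$'s explicitly in each of the (at most two) exceptional cases by inspecting the degree-$\le2$ polynomial on the right of \eqref{basicid3}, and only then sums; this is more robust and is what you would want if you were to flesh out your sketch into the full averaged proof.
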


This conjecture (originally stated as \cite[Conjecture 2]{sahasupwhittaker}) seems to be quite hard as it implies square-root cancelation in sums of twisted $\GL_2-\varepsilon$-factors. However, for the purpose of this paper, we can prove a bound  that is (at least) as strong as the above conjecture on \emph{average}. This is achieved by the second part of the next proposition, which generalizes some results obtained in~\cite[Section 2]{NPS}, which considered the special case $\omega_\pi=1$.

\begin{proposition}\label{whitsuppsizeram}The following hold. \begin{enumerate}
\item If $W_\pi(g) \neq 0$, then  $t(g) \ge -\max(2l(g), l(g)+m, n)$.

\item Suppose $t(g) =-\max(2l(g), l(g)+m, n) +r$ where $r \ge 0$. Then we have $$\left(\int_{v \in \OF^\times} \left|W_\pi( a(v)g) \right|^2 d^\times v\right)^{1/2} \ll q^{-r/4}.$$

\end{enumerate}
\end{proposition}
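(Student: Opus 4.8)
\textbf{Proof plan for Proposition~\ref{whitsuppsizeram}.}

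The plan is to analyze the identity \eqref{basicid} together with the formula $W_\pi(g_{t,l,v}) = \sum_{\mu: a(\mu) \le l} c_{t,l}(\mu)\mu(v)$ from the previous subsection, reducing everything to the case $g = g_{t,l,v}$ via the double coset decomposition \eqref{e:coset} (after first twisting $\pi$ so that $\omega_\pi \in \tilde X$, which changes nothing). The two claims then become statements about the coefficients $c_{t,l}(\mu)$: claim (1) is a support statement (that $c_{t,l}(\mu) = 0$ for $t$ too negative), and claim (2), after Plancherel on $\OF^\times/U_l$, is the bound $\sum_{\mu: a(\mu)\le l} |c_{t,l}(\mu)|^2 \ll q^{-r/2}$ when $t = -\max(2l, l+m, n) + r$.

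First I would extract $c_{t,l}(\mu)$ from \eqref{basicid}. The left side is $\varepsilon(\tfrac12,\mu\pi)\bigl(\sum_t q^{(t+a(\mu\pi))(\frac12 - s)} c_{t,l}(\mu)\bigr) L(s,\mu\pi)^{-1}$; the right side involves $\sum_a W_\pi(a(\varpi^a)) q^{-a(\frac12 - s)} G(\varpi^{a-l}, \mu^{-1})$. The Gauss sum $G(\varpi^{a-l},\mu^{-1})$ vanishes unless $a - l \ge -a(\mu)$ with the top nonzero term at $a = l - a(\mu)$ (and is a single term when $a(\mu) \ge 1$, contributing $q^{-a(\mu)}\cdot(\text{Gauss sum of modulus }q^{a(\mu)/2})$ roughly), and by Lemma~\ref{whitsuppdiagtriv} we also need $a \ge 0$. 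Combined with the fact that $L(1-s,\mu^{-1}\omega_\pi^{-1}\pi)^{-1}$ is a polynomial in $q^{s}$ of degree at most the number of unramified "Satake-type" parameters (zero when $a(\mu^{-1}\omega_\pi^{-1}\pi) \ge 1$, degree $\le 1$ for Steinberg twists, degree $\le 2$ for unramified $\pi$), reading off the coefficient of a given power of $q^{-s}$ on the right bounds the lowest $t$ for which $c_{t,l}(\mu) \ne 0$: it forces $t + a(\mu\pi) \ge -(\text{finitely many }q^{a(\mu)/2}\text{-sized terms shifted by } L\text{-degree})$, which after using $a(\mu\pi) \le \max(n, l+m)$ from Lemma~\ref{lemmatwist} yields $t \ge -\max(2l, l+m, n)$. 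This is claim (1), up to carefully tracking the worst case $\mu$ (i.e.\ $a(\mu) = l$).

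For claim (2), I would square and sum. From the coefficient extraction, $|c_{t,l}(\mu)|$ is governed by $|W_\pi(a(\varpi^a))| \cdot |G(\varpi^{a-l},\mu^{-1})| \cdot q^{a(s-\text{shift})}$ with $a$ determined (up to $L$-degree ambiguity) by $t$, $l$, $a(\mu)$ and $a(\mu\pi)$. Using $\sum_a |W_\pi(a(\varpi^a))|^2 \le$ (a constant, since $W_\pi$ restricted to $A\cap K$ is part of an $L^2$-normalized-type local object, or more safely the known bound $|W_\pi(a(\varpi^a))| \ll q^{-a/2}$ up to the $L$-function poles), and the Gauss-sum modulus bound $|G(\varpi^{j},\mu^{-1})| \le q^{-\max(0, -j)/2}$-type estimates, the key input becomes Lemma~\ref{lemmatwist}: the number of $\mu$ with $a(\mu) = l$ and $a(\mu\pi) = \max(n, l+m) - r'$ is $\le q^{l - r'/2}$. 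Dyadically decomposing the $\mu$-sum over $r'$ and matching $t + a(\mu\pi) = -\max(\cdots) + r + (\text{stuff})$ against the allowed range, one finds $\sum_\mu |c_{t,l}(\mu)|^2 \ll \sum_{r' \ge 0} q^{l - r'/2} \cdot q^{-l} q^{-(r + r' + \cdots)} \ll q^{-r/2}$, where the $q^{-l}$ comes from normalizing $\mu(v)$ over $\OF^\times/U_l$ and the geometric series in $r'$ converges. Taking square roots gives the claimed $q^{-r/4}$.

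\textbf{Main obstacle.} The hard part will be the bookkeeping in the non-principal-series cases — specifically, handling $L(s,\mu\pi)^{-1}$ and $L(1-s,\mu^{-1}\omega_\pi^{-1}\pi)^{-1}$ when $\pi$ (or $\mu\pi$) is unramified or an unramified twist of Steinberg, so that these $L$-factors are nontrivial polynomials and the coefficient of $q^{-ts}$ on one side mixes several $a$-values on the other. One must check that the extra degrees (at most $2$) only shift things by $O(1)$ and never spoil the inequalities, and separately that supercuspidal $\pi$ (where $l + m \le n$, so $\max(2l, l+m, n) = \max(2l, n)$ and Tunnell's theorem via Lemma~\ref{lemmatwist} controls $a(\mu\pi)$) is handled uniformly. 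A secondary subtlety is ensuring the bound $|W_\pi(a(\varpi^a))| \ll q^{-a/2}$ (temperedness of the diagonal values, valid since $\pi$ is unitary generic, with the borderline complementary-series exponents absorbed into $q^\eps$) is invoked in the form actually needed, or else circumvented by using Plancherel for $W_\pi|_A$ directly as in \cite[Section 2]{NPS}.
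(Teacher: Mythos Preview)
Your overall strategy is the same as the paper's: reduce to $g = g_{t,l,v}$, translate both claims into statements about the coefficients $c_{t,l}(\mu)$, and control these via the identity \eqref{basicid} together with Lemma~\ref{lemmatwist}. However, there is one genuine gap.

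\textbf{The missing reduction to $l \le n_0$.} You invoke Lemma~\ref{lemmatwist} for arbitrary $l$ with $0 \le l \le n$, but that lemma is stated (and only holds) under the hypothesis $l \le n_0$. For $l > n_0$ the bound $a(\mu\pi) \le \max(n, l+m)$ is simply false in general: e.g.\ if $\pi = \chi_1 \boxplus \chi_2$ with $a(\chi_1) = a(\chi_2) = k$ and $a(\mu) = l > k$, then $a(\mu\pi) = 2l$, which exceeds $\max(n, l+m) \le 3k$ once $l$ is large. The same restriction applies to the counting assertion you use in part (2). The paper fixes this by first applying the local Atkin--Lehner relation, Lemma~\ref{atkin-lehnerwhit}, which exchanges $(\pi, l)$ with $(\tilde\pi, n-l)$ and thereby reduces to $0 \le l \le n_0$; this is the point at which the term $2l$ in $\max(2l, l+m, n)$ is absorbed (for $l \le n_0$ one has $2l \le n$, so the maximum simplifies to $\max(n, l+m)$ and Lemma~\ref{lemmatwist} applies directly). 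Your plan never mentions this step, and without it neither the support claim nor the $L^2$ bound goes through for $l > n_0$.

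\textbf{A secondary point.} The paper does not extract $c_{t,l}(\mu)$ directly from \eqref{basicid}. Instead it introduces auxiliary coefficients $d_{t,l}(\mu)$ defined by stripping off $L(s,\mu\pi)^{-1}$ on the left-hand side, proves the analogue of Claims 1 and 2 for the $d$'s (which is cleaner, since the $L$-factor on the right has degree $\le 2$ and contributes only bounded shifts), and then recovers the statement for the $c$'s via $c_{t,l}(\mu) = \sum_i \alpha_i d_{t-i,l}(\mu)$ with $|\alpha_i| \ll q^{-i/2}$. Your sketch acknowledges the $L$-factor issue but handles it only heuristically; the $d_{t,l}$ device is what makes the bookkeeping actually close. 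Once you insert the Atkin--Lehner reduction and this intermediate step, your argument becomes essentially the paper's proof.
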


\begin{proof} By twisting $\pi$ with a character of the form $| \ |^{ir}$ if necessary (which does not change $|W_\pi|$), we may assume $\omega_\pi \in \tilde{X}$. Also assume $n \ge 1$ as the case $n=0$ is trivial. Because of the coset decomposition from earlier, we may further assume that $g=g_{t,l,v}:= a(\varpi^t)wn(\varpi^{-l}v))$. Finally, because of Lemma \ref{atkin-lehnerwhit}, we can assume (by replacing $\pi$ by $\tilde{\pi}$ if necessary) that $0 \le l \le n_0$. The desired result then is the following:

\begin{itemize}
\item Let $0 \le l \le n_0$. If $W_\pi(g_{t,l,v}) \neq 0$, then  $t \ge -\max(n, l +m)$. Further if $t =  -\max(n, l +m)+r$ where $r \ge 0$ then $$\left(\int_{v \in \OF^\times} \left|W_\pi(g_{t,l,v}) \right|^2 d^\times v\right)^{1/2} \ll q^{-r/4}.$$

\end{itemize}

 In the notation of~\eqref{basicid}, the above is equivalent to:

\textbf{Claim 1:} Let $0 \le l \le n_0$. If there exists $\mu\in \tilde{X}$ such that $a(\mu)\le l$ and $c_{t,l}(\mu) \neq 0$ then  $t \ge -\max(n, l +m)$. Further if $t = -\max(n, l +m)+r$ where $r \ge 0$ then $\sum_{\substack{\mu \in \tilde{X} \\ a(\mu)\le l}} \left|c_{t,l}(\mu) \right|^2  \ll q^{-r/2}.$

 Define the quantities $d_{t,l}(\mu)$ via the following identity (of polynomials in $q^{\pm s}$).

\begin{equation}\label{basicid2}
\varepsilon(\frac12, \mu\pi)\left(\sum_{t=-\infty}^\infty q^{(t+a(\mu\pi))(\frac12-s)} c_{t,l}(\mu)\right) L(s, \mu \pi)^{-1} = \left(\sum_{t=-\infty}^\infty q^{(t+a(\mu\pi))(\frac12-s)} d_{t,l}(\mu)\right) \end{equation}

Note that (for fixed $l$ and $\mu$)  $d_{t,l}(\mu)$ is non-zero for only finitely many $t$. Furthermore, $c_{t,l}(\mu) = \sum_{i=0}^\infty \alpha_i d_{t-i,l}(\mu)$ where $|\alpha_0| =1$ and $|\alpha_i| \ll q^{-i/2}$. (In fact, if $\pi$ is supercuspidal, $\alpha_i = 0$ for all $i>0$). Hence it suffices to prove Claim 1 above for the quantities $d_{t,l}(\mu)$ rather than $c_{t,l}(\mu)$. Therefore using~\eqref{basicid} it suffices to prove the following:

\medskip

\textbf{Claim 2:} \emph{Let $0 \le l \le n_0$. Define the quantities $d_{t,l}(\mu)$ via the identity \begin{equation}\label{basicid3}\begin{split}&\left(\sum_{t=-\infty}^\infty q^{(t+a(\mu\pi))(\frac12-s)} d_{t,l}(\mu)\right) \\&= \omega_\pi(-1)\left(\sum_{a=0}^\infty W_\pi(a(\varpi^a)) q^{-a(\frac12-s)}G(\varpi^{a-l}, \mu^{-1})\right) L(1-s, \mu^{-1}\omega_\pi^{-1}\pi)^{-1}.\end{split}\end{equation} If there exists $\mu\in \tilde{X}$ such that $a(\mu)\le l$ and $d_{t,l}(\mu) \neq 0$ then  $t \ge -\max(n, l +m)$. Further if $t = -\max(n, l +m)+r$ where $r \ge 0$ then $\sum_{\substack{\mu \in \tilde{X} \\ a(\mu)\le l}} \left|d_{t,l}(\mu) \right|^2  \ll q^{-r/2}.$
}

\medskip
We only consider the case $L(s, \pi)=1$, as the case $L(s, \pi) \neq 1$ is similar but easier. (Note that $L(s, \pi) \neq 1$ iff either $m=n$ or $n=1$.)

Let $\mu\in \tilde{X}$ be such that $a(\mu)\le l$. As $L(s, \pi)=1$, we can use the well-known formulas stated in~\cite[Equation (6) and Lemma 2.5]{sahasupwhittaker} to deduce that the quantity on the RHS of~\eqref{basicid3} lying inside the bracket is a constant of absolute value $\ll q^{-l/2}$ if $a(\mu)=l$ or $a(\mu)=0, \ l=1$; and is equal to 0 otherwise. Furthermore, there are at most 2 characters $\mu \in \tilde{X}$ with $a(\mu) \le n_0$ and $L(s, \mu^{-1}\omega_\pi^{-1}\pi)\neq 1$ (this can be checked, for example, using the classification written down in~\cite[Sec. 2.2]{sahasupwhittaker}).

We henceforth assume that $a(\mu)=l$ or $a(\mu)=0, \ l=1$; else there is nothing to prove as $d_{t,l}(\mu)=0$. Suppose first that $L(s, \mu^{-1}\omega_\pi^{-1}\pi)=1$. Then by equating coefficients on both sides of~\eqref{basicid3}, we see that $d_{t,l}(\mu) \neq 0 \Rightarrow t = -a(\mu \pi) \ge -\max(n, a(\mu)+m) \ge -\max(n, l+m),$ using Lemma \ref{lemmatwist}.
Furthermore if $t = - \max(n, l+m) + r$, then
$$\sum_{\substack{\mu \in \tilde{X} \\ a(\mu)\in \{l,0\} \\ L(s, \mu\pi)=1}} \left|d_{t,l}(\mu) \right|^2  \ll \sum_{\substack{\mu \in \tilde{X} \\ a(\mu)\in \{l,0\} \\ a(\mu\pi)=\max(n, l+m) -r}} q^{-l} \ll q^{-r/2},$$ again using Lemma \ref{lemmatwist}.

Suppose next that $L(s, \mu^{-1}\omega_\pi^{-1}\pi)\neq 1$. In this case $\mu \ne 1$ so if $d_{t,l}(\mu)\ne 0$ we must have $a(\mu)=l$. Also, the right side of~\eqref{basicid3} is of the form $\alpha_0 + \alpha_1 q^{-1(\frac12 -s)} + \alpha_2 q^{-2(\frac12 -s)}$ with $\alpha_i \ll q^{-(l+i)/2}$. Furthermore if $\alpha_2 \neq 0$ then $a(\mu \pi)=0 \le n-2$ and if $\alpha_2=0$ then $\alpha_1 \neq 0$ and $a(\mu \pi) \le \max(n_0, m) \le n-1$. So again equating coefficients and using Lemma \ref{lemmatwist}, we see that  $d_{t,l}(\mu) \neq 0 \Rightarrow t \ge  -n \ge  -\max(n, l+m),$ and
furthermore if $t = - \max(n, l+m) + r$, then
$$\sum_{\substack{\mu \in \tilde{X} \\ a(\mu)= l \\ L(s, \mu\pi)\neq 1}} \left|d_{t,l}(\mu) \right|^2  \ll \sum_{i=0}^2\sum_{\substack{\mu \in \tilde{X} \\ a(\mu)= l \\ a(\mu\pi)=\max(n, l+m) -r-i}} q^{-l-i} \ll q^{-r/2},$$ again using Lemma \ref{lemmatwist}.
 Putting everything together, the proof of Claim 2 is complete. \end{proof}
Next, for any $g \in G$, define $$n_0(g) = \min(l(g), \ n-l(g)), \qquad q(g) = \max(n_0, \ n_0(g)  -n_1 +m).$$ We note the useful bounds $0 \le n_0(g) \le n_0$ and $n_0 \le q(g) \le n_0 + m_1$.

\begin{proposition}\label{localwsupportfinal}Suppose that $g \in K a(\varpi^{n_1})$. Assume further that either $n$ is even or $l(g) \le n_0$. Then the following hold \begin{enumerate}
\item If  for some $y \in F^\times$, we have $W_\pi(a(y)g) \neq 0$, then  $v(y) \ge -q(g)$.

\item Suppose $b = -q(g)+r$ where $r \ge 0$. Then we have $$\left(\int_{v \in \OF^\times} \left|W_\pi( a(\varpi^bv)g) \right|^2 d^\times v\right)^{1/2} \ll q^{-r/4}.$$

\end{enumerate}
\end{proposition}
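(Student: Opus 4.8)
The plan is to reduce Proposition \ref{localwsupportfinal} to Proposition \ref{whitsuppsizeram} by carefully translating the hypotheses on $g \in Ka(\varpi^{n_1})$ into the invariants $t(g)$ and $l(g)$, using the auxiliary Lemmas \ref{usefull}, \ref{lemma:ALnew} and \ref{lemmatg}. The key observation is that, since we are integrating $|W_\pi(a(\varpi^b v)g)|^2$ over $v \in \OF^\times$ and $W_\pi$ is left $N$-equivariant and $K_1(\p^n)$-invariant, only the double coset of $g$ matters, and the quantity $\int_{\OF^\times}|W_\pi(a(\varpi^b v)g)|^2 \, d^\times v$ is essentially the same as the quantity $\int_{\OF^\times}|W_\pi(g_{t,l,v})|^2\,d^\times v$ appearing in Proposition \ref{whitsuppsizeram}, with $t = t(a(\varpi^b)g)$ and $l = l(g)$. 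So the whole proof is bookkeeping with these invariants.

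\textbf{Step 1 (the even case).} Suppose $n$ is even, so $n_1 = n_0$ and $g \in Ka(\varpi^{n_0})$. By Lemma \ref{lemmatg}, $t(g) = \min(n_0 - 2l(g), -n_0) = n_0 - 2\max(l(g), n_0)$. Write $l = l(g)$; then $l$ can be anything in $\{0, \dots, n\}$, but by the symmetry $l \leftrightarrow n-l$ (via replacing $\pi$ by $\tilde\pi$, as in the proof of Proposition \ref{whitsuppsizeram}) I may as well track $n_0(g) = \min(l, n-l)$. For $y = \varpi^b v$, the matrix $a(y)g$ lies in the same coset as $g_{t',l,v'}$ with $t' = t(g) + b$ (multiplying on the left by $a(\varpi^b)$ shifts $t$ by $b$). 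By part (1) of Proposition \ref{whitsuppsizeram}, nonvanishing forces $t' \ge -\max(2l, l+m, n)$, i.e. $b \ge -\max(2l, l+m, n) - t(g)$. Now I compute $-\max(2l, l+m, n) - t(g) = -\max(2l, l+m, n) - n_0 + 2\max(l, n_0)$; a short case analysis (cases $l \le n_0$ and $l > n_0$, using $m \le n$ in some subcases and $n = 2n_0$) should show this equals exactly $-q(g) = -\max(n_0, n_0(g) - n_1 + m)$. Part (2) follows identically: with $r = b + q(g)$ one checks $t' = -\max(2l, l+m, n) + r$, and Proposition \ref{whitsuppsizeram}(2) gives the $q^{-r/4}$ bound directly.

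\textbf{Step 2 (the odd case).} Now $n$ is odd, $n_1 = n_0 + 1$, and the hypothesis is $g \in Ka(\varpi^{n_1})$ with $l(g) \le n_0$. By Lemma \ref{lemmatg}, $t(g) = \min(n_1 - 2l(g), -n_1) = -n_1$ since $l(g) \le n_0 < n_1$. I want to apply Proposition \ref{whitsuppsizeram}, but there the relevant constraint is still in terms of $l = l(g)$ and $m$. Writing $y = \varpi^b v$, again $a(y)g$ is in the coset of $g_{t(g)+b, l(g), v'}$. Since $l(g) \le n_0$ we are already in the range where Proposition \ref{whitsuppsizeram}'s reduction placed us (the bullet-point claim in its proof), so nonvanishing forces $t(g) + b \ge -\max(n, l(g) + m)$, i.e. $b \ge -\max(n, l(g)+m) + n_1$. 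It remains to verify $-\max(n, l(g)+m) + n_1 = -q(g)$ when $l(g) \le n_0$: here $n_0(g) = l(g)$, so $q(g) = \max(n_0, l(g) - n_1 + m)$, and $-\max(n, l(g)+m) + n_1 = \min(n_1 - n, n_1 - l(g) - m) = \min(-n_0, -(l(g) + m - n_1)) = -\max(n_0, l(g) + m - n_1) = -q(g)$, as desired (using $n_1 - n = -n_0$). Part (2) is again immediate from Proposition \ref{whitsuppsizeram}(2) with $r = b + q(g)$.

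\textbf{Main obstacle.} The conceptual content is entirely in Proposition \ref{whitsuppsizeram}, which is already proved; the work here is purely organizational. The one genuinely delicate point is handling the $l \leftrightarrow n-l$ symmetry cleanly in the even case: when $l(g) > n_0$ one must pass to $\tilde\pi$ (invoking Lemma \ref{atkin-lehnerwhit}, which preserves $|W_\pi|$ up to the $\eps$-factor of absolute value $1$) and re-express everything, and one must check that the formula for $t(g)$ from Lemma \ref{lemmatg} transforms correctly under this flip so that the final answer is still $-q(g)$ with $q(g)$ depending on $g$ only through $n_0(g)$. The role of Lemmas \ref{usefull} and \ref{lemma:ALnew} is presumably exactly to make this flip legitimate and to confirm that the hypothesis ``$l(g) \le n_0$'' in the odd case is the natural one (it is the condition that puts us in $wK^0(\p)$, where the reduction in Proposition \ref{whitsuppsizeram}'s proof applies without further twisting). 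I expect the bulk of the write-up to be a clean three- or four-line case table verifying the identity $-\max(2l, l+m, n) - t(g) = -q(g)$.
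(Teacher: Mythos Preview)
Your approach is correct and is exactly the paper's: the paper's proof reads, in full, ``This follows immediately by putting together Lemma~\ref{lemmatg} and Proposition~\ref{whitsuppsizeram}.'' Your Steps~1 and~2 carry out precisely this bookkeeping, and your case computations verifying $-\max(2l,l+m,n)-t(g)=-q(g)$ are accurate.

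One simplification: the ``main obstacle'' you flag is not actually present. Proposition~\ref{whitsuppsizeram} is stated and proved for \emph{all} $l(g)\in\{0,\dots,n\}$; the reduction to $l\le n_0$ via $\tilde\pi$ and Lemma~\ref{atkin-lehnerwhit} happens \emph{inside} its proof, not at the level of its statement. So when you invoke Proposition~\ref{whitsuppsizeram} here you may treat both $l\le n_0$ and $l>n_0$ uniformly, with no need to pass to $\tilde\pi$ yourself. Likewise, Lemmas~\ref{usefull} and~\ref{lemma:ALnew} play no role in this proposition (they are used later, in constructing the global generating domain); only Lemma~\ref{lemmatg} is needed.
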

\begin{proof}This follows immediately by putting together Lemma~\ref{lemmatg} and Proposition~\ref{whitsuppsizeram}.
\end{proof}

\begin{remark}Note that the map on $\OF^\times$ given by $v \mapsto |W_\pi(a(vy)g)|$ is $U_{n_0(g)}$ invariant for all $y \in F^\times$, $g\in G$. Hence the second part of the above proposition is equivalent to $$\frac{1}{|\OF^\times/ U_{n_0(g)}|} \sum_{v \in \OF^\times/U_{n_0(g)}} \left|W_\pi( a(\varpi^bv)g) \right|^2 d^\times v \ll q^{-r/2}.$$
\end{remark}

\subsection{Test functions}\label{s:mcbeg}
We now change gears and start looking at certain local test functions (related to matrix coefficients) that will be used later by us in the trace formula. We begin with some definitions. Let $C_c^\infty(G, \omega_\pi^{-1})$ be the space of functions $\kappa$ on $G$  with the following properties:

 \begin{enumerate}

 \item $\kappa(z(y)g) = \omega_\pi^{-1}(y)\kappa(g)$.

 \item $\kappa$ is locally constant.

 \item   $|\kappa|$ is compactly supported on $Z\bs G$.

 \end{enumerate}
  Given $\kappa_1$, $\kappa_2$ in $C_c^\infty(G, \omega_\pi^{-1})$ we define the convolution $\kappa_1 \ast \kappa_2\in C_c^\infty(G, \omega_\pi^{-1})$ via \begin{equation}(\kappa_1 \ast \kappa_2) (h) = \int_{Z\bs G} \kappa_1(g^{-1})\kappa_2(gh) dg, \end{equation}
which turns $C_c^\infty(G, \omega_\pi^{-1})$ into an associative algebra.

Next let $\sigma$ be a representation of $G$ with central character equal to $\omega_\pi$. Then, for any $\kappa \in C_c^\infty(G, \omega_\pi^{-1})$, and any vector $v \in \sigma$, we define $R(\kappa) v$ to be the vector in $\sigma$ given by
\begin{equation}R(\kappa)v = \int_{Z\bs G} \kappa(g)(\sigma(g)v)  \ dg. \end{equation}

Let $v_\pi$ be any \emph{newform} in the space of $\pi$, i.e., any non-zero vector fixed by $K_1(\p^n)$. Equivalently $v_\pi$ can be any vector in $\pi$  corresponding to $W_\pi$ under some isomorphism $\pi \simeq \mathcal{W}(\pi,\psi)$. Thus $v_\pi$ is unique up to multiples. Put $v_\pi' = \pi(a(\varpi^{n_1}))v_\pi$. Note that $v_\pi'$ is up to multiples the unique non-zero vector in $\pi$ that is invariant under the subgroup $a(\varpi^{n_1})K_1(\p^n)a(\varpi^{-n_1})$.

Let $\langle, \rangle$ be any $G$-invariant inner product on $\pi$ (this is also unique up to multiples).  Define a matrix coefficient $\Phi_\pi$ on $G$ as follows:

$$\Phi_\pi(g) =
\frac{\langle
v_\pi , \pi(g) v_\pi\rangle}{\langle v_\pi , v_\pi\rangle}$$ which is clearly independent of the choice of $v_\pi$ or the normalization of inner product.

Let $$K^0 := K^0(\p^{n_1-n_0}) = \begin{cases} K &\text{ if } n \text{ is even, } \\ K^0(\p) &\text{ if } n \text{ is odd. }\end{cases}$$ Put $$\Phi_\pi'(g) = \begin{cases} \Phi_\pi(a(\varpi^{-n_1})ga(\varpi^{n_1})) = \frac{\langle
v_\pi' , \pi(g) v_\pi'\rangle}{\langle v_\pi' , v_\pi'\rangle} & \text{ if } g \in ZK^0, \\ 0 & \text{ if } g \notin ZK^0. \end{cases}$$

Then it follows that $\Phi_\pi' \in C_c^\infty(G, \omega_\pi^{-1})$ and $\Phi_\pi'(g^{-1}) = \overline{\Phi_\pi'(g)}$. In particular, the operator $R(\Phi'_\pi)$ is self-adjoint.

\begin{proposition}\label{keymatrixprop}There exists a positive real constant $\delta_\pi$ depending only on $\pi$ and satisfying $\delta_\pi \gg q^{-n_1-m_1}$ such that the following hold.
\begin{enumerate}

\item $R(\Phi'_\pi)v_\pi' = \delta_\pi v_\pi'$,

\item $\Phi'_\pi \ast \Phi'_\pi = \delta_\pi \Phi'_\pi.$

\end{enumerate}

\end{proposition}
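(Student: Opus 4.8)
\textbf{Proof proposal for Proposition \ref{keymatrixprop}.}

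The plan is to exploit the fact that, up to the conjugation by $a(\varpi^{n_1})$ that defines $\Phi_\pi'$, the vector $v_\pi'$ is (up to scalars) the unique vector fixed by $K_1' := a(\varpi^{n_1})K_1(\p^n)a(\varpi^{-n_1})$, and that the test function $\Phi_\pi'$ is, by construction, essentially the matrix coefficient of $v_\pi'$ truncated to $ZK^0$. First I would observe that $K_1(\p^n) \subseteq K^0(\p^{n_1-n_0})$ is \emph{not} what we need; rather, conjugating by $a(\varpi^{n_1})$ we get $K_1' \subseteq K^0$ (this is where the asymmetric truncation $K^0(\p^{n_1-n_0})$ comes from — it is exactly the conjugate of $K$ or $K^0(\p)$ that still contains $K_1'$). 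Consequently $\Phi_\pi'$ is bi-$K_1'$-invariant in the appropriate twisted sense, and $R(\Phi_\pi')$ maps into the line $\C v_\pi'$: indeed $R(\Phi_\pi')$ commutes with the action of $K^0$ suitably and, more to the point, any vector in the image is $K_1'$-fixed because $\Phi_\pi'$ is left $K_1'$-invariant. Since $\pi$ has a one-dimensional space of $K_1(\p^n)$-fixed vectors (it is a newform, $n = a(\pi)$), the same holds for $K_1'$-fixed vectors, so $R(\Phi_\pi')$ has rank at most one and $R(\Phi_\pi')v_\pi' = \delta_\pi v_\pi'$ for some scalar $\delta_\pi$. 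Part (2), $\Phi_\pi' \ast \Phi_\pi' = \delta_\pi \Phi_\pi'$, then follows formally: one checks that $\Phi_\pi'$ acts on the Whittaker/Kirillov model as (a multiple of) the projection onto $\C v_\pi'$ composed with the matrix-coefficient reconstruction, so the convolution algebra element is essentially idempotent; concretely, $R(\Phi_\pi' \ast \Phi_\pi') = R(\Phi_\pi')^2 = \delta_\pi R(\Phi_\pi')$ on $\pi$, and since $\Phi_\pi'$ is determined by its action (matrix coefficient against $v_\pi'$), this lifts back to the stated identity.

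That $\delta_\pi$ is real and positive is immediate from self-adjointness of $R(\Phi_\pi')$ together with $\delta_\pi = \langle R(\Phi_\pi')v_\pi', v_\pi'\rangle / \langle v_\pi', v_\pi'\rangle$ and the fact that $\Phi_\pi'$ is (a truncation of) $g \mapsto \overline{\langle v_\pi', \pi(g)v_\pi'\rangle}/\langle v_\pi', v_\pi'\rangle$, so that $\delta_\pi = \vol(Z\bs ZK^0)^{-1}\int_{ZK^0} |\langle v_\pi', \pi(g)v_\pi'\rangle|^2/\langle v_\pi', v_\pi'\rangle^2\, dg$ after unwinding — manifestly positive. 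The real content is therefore the lower bound $\delta_\pi \gg q^{-n_1 - m_1}$. Here I would compute $\delta_\pi$ as an integral over $Z\bs ZK^0$ of $|\Phi_\pi(a(\varpi^{-n_1})ga(\varpi^{n_1}))|^2$ (up to normalization), i.e.\ an integral of $|\Phi_\pi|^2$ over the conjugated domain $a(\varpi^{-n_1})(ZK^0)a(\varpi^{n_1})$, and bound it below by restricting to the region where $\Phi_\pi$ is close to $1$ — namely a neighbourhood of the identity of the form $Z\cdot K_1(\p^n)$ or a slightly larger explicit compact set on which the matrix coefficient of the newform does not decay. The key input is that the newform matrix coefficient $\Phi_\pi$ is bounded below on a set of volume $\gg q^{-n_1-m_1}$ inside the relevant domain; equivalently, that $v_\pi'$ has a controlled "spread" — this is exactly the sort of estimate governed by how $K_1'$ sits inside $K^0$, and the index $[K^0 : K_1']$ is on the order of $q^{n_1 + m_1}$ (the $m_1$ appearing precisely through the central character contribution when $\omega_\pi$ is highly ramified). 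So one writes $\delta_\pi \geq \vol(ZK_1''\bs ZK^0)^{-1}$ for a suitable subgroup $K_1''$ on which $\Phi_\pi'$ is essentially constant of absolute value one, and estimates this index.

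I expect the main obstacle to be the clean verification of the lower bound $\delta_\pi \gg q^{-n_1-m_1}$, because it requires a genuine (not merely formal) estimate on the matrix coefficient $\Phi_\pi$ of the newform away from the identity — one must identify an explicit compact subgroup $K^\flat$ (with $K_1(\p^n) \subseteq K^\flat$ and $[K^\flat : K_1(\p^n)]$ controlled) such that $v_\pi$ is "almost fixed" by $K^\flat$ in the sense that $|\Phi_\pi(k)| \geq c > 0$ for $k \in K^\flat$, and then relate its index after conjugation to $q^{n_1+m_1}$. A natural candidate, following the structure of the problem, is to use $K^\flat$ related to $Z U_? \ltimes (\text{unipotent radical mod }\p^{n_1})$, and the nontrivial point is to show the newvector does not oscillate on it — this may need the local functional equation or the explicit Whittaker expansion (Lemma \ref{atkin-lehnerwhit} and the formulas feeding into Proposition \ref{whitsuppsizeram}) to control $\langle v_\pi, \pi(k)v_\pi\rangle$ via an integral of $W_\pi(a(y)k)\overline{W_\pi(a(y))}$ over $y$. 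Once that positivity-on-a-large-set statement is in hand, both parts of the proposition and the size bound follow by the rank-one / idempotency bookkeeping sketched above. The formal parts (rank one, real, positive, essential idempotency) are routine; the quantitative lower bound is where the work lies, and it is consistent in spirit with the averaged Ramanujan bound of Proposition \ref{whitsuppsizeram}, which controls the "mass" of $W_\pi$ and hence prevents $\delta_\pi$ from being too small.
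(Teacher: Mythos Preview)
Your treatment of the formal parts --- that $R(\Phi_\pi')$ has image in the one-dimensional $K_1'$-fixed line, hence $R(\Phi_\pi')v_\pi' = \delta_\pi v_\pi'$ for some scalar, that $\delta_\pi = \int_{K^0}|\Phi_\pi'|^2 > 0$, and that essential idempotency follows --- is correct and matches the paper's Lemmas~\ref{preplemma1} and the subsequent lemma exactly.

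The genuine gap is in the lower bound $\delta_\pi \gg q^{-n_1-m_1}$. Your key quantitative claim, that ``the index $[K^0:K_1']$ is on the order of $q^{n_1+m_1}$'', is false: since conjugation preserves indices, $[K:K_1'] = [K:K_1(\p^n)] \asymp q^{2n}$, so $[K^0:K_1'] \asymp q^{2n}$ as well (up to a bounded factor). Restricting the integral $\int_{K^0}|\Phi_\pi'|^2$ to the set $K_1'$ where $|\Phi_\pi'|=1$ therefore only yields $\delta_\pi \gg q^{-2n}$, which misses the target $q^{-n_1-m_1}$ by roughly $q^{n}$. Your fallback --- produce a larger explicit subgroup $K^\flat$ on which $|\Phi_\pi|\ge c$ with the right index --- is not carried out, and in fact is essentially equivalent in difficulty to the whole problem: you would need $\vol(K^\flat)\asymp q^{-n_1-m_1}$, and there is no obvious such subgroup with a pointwise lower bound on the matrix coefficient. (Also, the inequality ``$\delta_\pi \ge \vol(ZK_1''\backslash ZK^0)^{-1}$'' is written backwards; you want $\delta_\pi \ge \vol(K_1'')$.)

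The paper proceeds quite differently for the lower bound. By Schur orthogonality for the irreducible finite-dimensional $K^0$-representation $\pi'$ generated by $v_\pi'$, one has $\delta_\pi = ([K:K^0]\dim\pi')^{-1}$, so it suffices to prove $\dim\pi' \ll q^{n_0+m_1}$. This is done in two cases. If $\pi$ is non-supercuspidal, one embeds $\pi$ in a principal series $\chi_1\boxplus\chi_2$ and exhibits an explicit $K^0$-stable subspace $V_{K'}$ (defined by congruence conditions involving $n_0+m_1$) containing $v_\pi'$ with $\dim V_{K'} \ll q^{n_0+m_1}$, counted via the Iwasawa decomposition. If $\pi$ is supercuspidal (where automatically $m_1=0$), the paper instead derives an explicit closed formula for the matrix coefficient $\Phi_\pi$ in terms of Gauss sums and twisted $\varepsilon$-factors (Proposition~\ref{mcsupercusp}), and then evaluates $\int_{K^0}|\Phi_\pi'|^2$ directly; the diagonal Gauss-sum terms give the main contribution $\asymp q^{-n_1}$ and the cross terms vanish by character orthogonality. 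Neither of these two mechanisms is anticipated in your proposal, and the naive ``restrict to where $\Phi_\pi'\approx 1$'' strategy cannot recover them.
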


\begin{remark}The above Proposition is a refinement of a result of Marshall~\cite{marsh15}, who proved a similar result in the special case  $\omega_\pi=1$, using a slightly different test function which does not differentiate between $n$ odd and even.
\end{remark}

\begin{remark} In fact with some additional work one can prove $\delta_\pi \asymp q^{-n_1 - m_1}.$
\end{remark}
The rest of this section will be devoted to proving this proposition. We note a useful corollary.

\begin{corollary}\label{nonneg}Let $\sigma$ be a generic
irreducible admissible unitarizable
representation of $G$ such that $\omega_\sigma = \omega_\pi$ and let $v_\sigma$ be any vector in the space of $\sigma$. Suppose that $R(\Phi'_\pi)v_\sigma = \delta v_\sigma$ for some complex number $\delta$. Then $\delta \in \{0, \delta_\pi\};$ in particular, $\delta$ is a non-negative real number.
\end{corollary}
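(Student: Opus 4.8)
The plan is to exploit the idempotency relation $\Phi'_\pi \ast \Phi'_\pi = \delta_\pi \Phi'_\pi$ from Proposition \ref{keymatrixprop}(2), which forces the operator $R(\Phi'_\pi)$ to satisfy the polynomial identity $R(\Phi'_\pi)^2 = \delta_\pi R(\Phi'_\pi)$ on the space of \emph{any} representation $\sigma$ with $\omega_\sigma = \omega_\pi$. First I would record that, since $\delta_\pi \neq 0$, the operator $\delta_\pi^{-1} R(\Phi'_\pi)$ is idempotent, hence its only possible eigenvalues are $0$ and $1$; translating back, the only possible eigenvalues of $R(\Phi'_\pi)$ are $0$ and $\delta_\pi$. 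This is the essential algebraic content: if $R(\Phi'_\pi) v_\sigma = \delta v_\sigma$ with $v_\sigma \neq 0$, then applying $R(\Phi'_\pi)$ again gives $\delta^2 v_\sigma = \delta_\pi \delta v_\sigma$, so $\delta(\delta - \delta_\pi) = 0$, i.e.\ $\delta \in \{0, \delta_\pi\}$. Since $\delta_\pi$ was shown in Proposition \ref{keymatrixprop} to be a positive real constant, $\delta$ is in either case a non-negative real number.

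One point to be careful about is the possibility $v_\sigma = 0$, in which case the hypothesis $R(\Phi'_\pi) v_\sigma = \delta v_\sigma$ holds vacuously for every $\delta$ and the conclusion "$\delta \in \{0, \delta_\pi\}$" would be false. I would handle this by simply reading the statement as referring to a nonzero vector $v_\sigma$ (which is the only case of interest, and is presumably the intended reading); alternatively one notes that the claim is about eigenvalues, so the degenerate vector carries no information and should be excluded. Apart from this, the only thing to verify is that Proposition \ref{keymatrixprop}(2) — which is stated there for the convolution algebra $C_c^\infty(G,\omega_\pi^{-1})$ — indeed implies the operator identity $R(\Phi'_\pi)^2 = \delta_\pi R(\Phi'_\pi)$ on $\sigma$; this follows from the fact that $\kappa \mapsto R(\kappa)$ is an algebra homomorphism from $C_c^\infty(G,\omega_\pi^{-1})$ to $\mathrm{End}(\sigma)$, i.e.\ $R(\kappa_1 \ast \kappa_2) = R(\kappa_1) R(\kappa_2)$, which is immediate from the definitions of the convolution and of $R$ by Fubini (the compact support modulo $Z$ makes the interchange of integrals legitimate).

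I do not anticipate any real obstacle here: the corollary is a purely formal consequence of the idempotency already proved, and the only substantive input — that $\delta_\pi > 0$ — is part of the statement of Proposition \ref{keymatrixprop}. The "main step" is therefore just the one-line algebraic manipulation $\delta^2 = \delta_\pi \delta$; everything else is bookkeeping (checking $R$ is multiplicative, noting $\delta_\pi \neq 0$, and observing positivity). If anything needs emphasis in the writeup, it is that the conclusion holds for an \emph{arbitrary} such $\sigma$ and not only for $\pi$ itself, which is exactly what makes the corollary useful on the spectral side of the pre-trace formula: every representation occurring there contributes a non-negative amount.
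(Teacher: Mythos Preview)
Your proposal is correct and follows exactly the paper's own argument: apply $R(\Phi'_\pi)$ twice to $v_\sigma$, use $\Phi'_\pi \ast \Phi'_\pi = \delta_\pi \Phi'_\pi$ and the multiplicativity of $R$ to obtain $\delta^2 v_\sigma = \delta_\pi \delta v_\sigma$, and conclude $\delta \in \{0,\delta_\pi\}$. The paper's proof is the one-line chain $\delta\delta_\pi v_\sigma = \delta_\pi R(\Phi'_\pi)v_\sigma = R(\Phi'_\pi \ast \Phi'_\pi)v_\sigma = R(\Phi'_\pi)^2 v_\sigma = \delta^2 v_\sigma$, so your plan matches it essentially verbatim (your remarks on $v_\sigma=0$ and on $R$ being a homomorphism are reasonable bookkeeping but not needed beyond what the paper assumes implicitly).
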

\begin{proof}We have $$\delta \delta_\pi v_\sigma = \delta_\pi R(\Phi'_\pi)v_\sigma = R(\Phi'_\pi \ast \Phi'_\pi)v_\sigma = R(\Phi'_\pi) R(\Phi'_\pi)v_\sigma=\delta^2 v_\sigma,$$ implying that $\delta \in \{0, \delta_\pi\}.$
\end{proof}

\subsection{Some preparatory lemmas}

\begin{lemma}\label{preplemma1}Consider the representation $\pi|_{K^0}$ of $K^0$ and let $\pi'$ be the subrepresentation of $\pi|_{K^0}$ generated by $v_\pi'$. Then $\pi'$ is a finite dimensional irreducible representation of $K^0$.
\end{lemma}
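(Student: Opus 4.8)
The plan is to understand the structure of $\pi|_{K^0}$ restricted to a neighborhood of the identity and then use the fact that $v_\pi'$ is fixed by a specific open compact subgroup. First I would recall that $v_\pi'$ is, up to scalars, the unique nonzero vector invariant under $J := a(\varpi^{n_1})K_1(\p^n)a(\varpi^{-n_1})$, and note that $J$ is an open compact subgroup of $K^0$ (one checks directly that conjugating $K_1(\p^n)$ by $a(\varpi^{n_1})$ yields a subgroup still contained in $K^0(\p^{n_1-n_0})$, using $n_1 - n_0 \in \{0,1\}$). Since $J$ is open, the subspace $\pi^J$ of $J$-fixed vectors is finite dimensional — in fact here it is one-dimensional — and $v_\pi' \in \pi^J$.

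Next I would show $\pi'$, the $K^0$-subrepresentation generated by $v_\pi'$, is finite dimensional. The key input is that $\pi$ is admissible, so $\pi^{J'}$ is finite dimensional for every open compact $J' \le K^0$; but more directly, since $J$ is an open compact subgroup of the compact group $K^0$, it has finite index, say $[K^0 : J] = d$. Then $\pi'$ is spanned by the finitely many translates $\pi(k_i) v_\pi'$ as $k_i$ ranges over coset representatives of $J$ in $K^0$ (because $v_\pi'$ is $J$-fixed, $\pi(k)v_\pi'$ depends only on the coset $kJ$), so $\dim \pi' \le d < \infty$.

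Finally, for irreducibility: $\pi'$ is finite dimensional and $K^0$ is a compact (hence reductive, in the sense that its continuous finite-dimensional representations are semisimple) group, so $\pi'$ decomposes as a direct sum of irreducible $K^0$-representations. The point is to show there is only one summand. Each irreducible summand, being a subrepresentation of $\pi'$ which is generated by $v_\pi'$, must contain a vector in the line $\mathbb{C}v_\pi'$ — indeed the projection of $v_\pi'$ to each summand is nonzero (else $v_\pi'$ would lie in a proper sub, contradicting that it generates $\pi'$), and this projection, being $J$-fixed and lying in that irreducible component, shows the component has nonzero $J$-fixed space. Since $\dim \pi^J = 1$, the $J$-fixed vector $v_\pi'$ cannot have nonzero projection onto two distinct summands without violating that the total $J$-fixed space is one-dimensional. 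Hence there is a single summand, i.e., $\pi'$ is irreducible.

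The main obstacle is making the last uniqueness argument airtight: one must genuinely use that $\dim_{\mathbb C} \pi^J = 1$ (the global statement of local newform uniqueness, applied to the conjugated subgroup $J$), and correctly pass between "$v_\pi'$ generates $\pi'$" and "$v_\pi'$ has nonzero component in every irreducible constituent," which requires complete reducibility of finite-dimensional $K^0$-representations together with the observation that the $J$-fixed subspace of $\pi'$ is exactly $\mathbb{C}v_\pi'$. Everything else (finite index of $J$ in $K^0$, $J \subseteq K^0$, admissibility) is routine and can be quoted from the setup.
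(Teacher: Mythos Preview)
Your proof is correct and follows essentially the same approach as the paper: both arguments decompose $\pi'$ into irreducibles using compactness of $K^0$, then invoke newform uniqueness (the one-dimensionality of the $J$-fixed subspace, $J = a(\varpi^{n_1})K_1(\p^n)a(\varpi^{-n_1})$) to rule out multiple summands. Your treatment is slightly more explicit in verifying $J \subseteq K^0$ and in deducing finite-dimensionality from the finite index $[K^0:J]$ rather than appealing to admissibility, but the substance is the same.
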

\begin{proof}We know that $\pi'$ is isomorphic to a direct sum of irreducible
representations of $K^0$. However if there were more than one summand in the
decomposition of $\pi'$, then the representation $\pi|_{K^0}$  (and hence the representation $\pi$) would contain a
$a(\varpi^{n_1})K_1(\p^n)a(\varpi^{-n_1})$-fixed subspace of dimension greater than one; by
newform theory this is impossible. Hence $\pi'$ is irreducible. The finite dimensionality of $\pi'$ follows from the admissibility of $\pi$.
\end{proof}

\begin{lemma}Let $\pi'$ be as in the above Lemma. Then both the claims of Proposition~\ref{keymatrixprop} hold with the quantity $\delta_\pi$ defined as follows: $$\delta_\pi = \ \int_{Z\bs G} |\Phi_\pi'(g)|^2 dg = \ \int_{K^0} |\Phi_\pi'(g)|^2 dg = \  \frac{1}{[K:K^0] \ \mathrm{dim}(\pi')}.$$
\end{lemma}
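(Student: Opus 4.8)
The plan is to prove the three-way identity for $\delta_\pi$ by a standard Schur-orthogonality argument applied to the finite-dimensional irreducible representation $\pi'$ of the compact group $K^0$, and then to deduce the two claims of Proposition~\ref{keymatrixprop} from it. First I would record that since $\Phi_\pi'$ is supported on $ZK^0$ and satisfies $\Phi_\pi'(z(y)g) = \omega_\pi^{-1}(y)\Phi_\pi'(g)$, the integral $\int_{Z\bs G}|\Phi_\pi'(g)|^2\,dg$ reduces to an integral over $K^0$ (with respect to the probability Haar measure on $K^0$, suitably normalized against the fixed measure on $Z\bs G$; one has to be a little careful about the normalization constant $[K:K^0]$, which enters because $dg$ on $Z\bs G$ restricted to $ZK^0/Z \cong K^0$ is $[K:K^0]^{-1}$ times the probability measure on $K^0$). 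By definition $\Phi_\pi'(g) = \langle v_\pi', \pi(g)v_\pi'\rangle/\langle v_\pi', v_\pi'\rangle$ for $g\in K^0$, and since $v_\pi'$ generates the irreducible $K^0$-subrepresentation $\pi'$ (Lemma~\ref{preplemma1}), this is a diagonal matrix coefficient of $\pi'$.

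Next I would invoke Schur orthogonality for the compact group $K^0$: for an irreducible unitary representation $\rho$ of dimension $d$ on a space with invariant inner product, $\int_{K^0} |\langle u, \rho(k)v\rangle|^2\,dk = \frac{1}{d}\langle u,u\rangle\langle v,v\rangle$ (probability Haar measure). Applying this with $u = v = v_\pi'$ and $\rho = \pi'$ gives $\int_{K^0}|\Phi_\pi'(k)|^2\,dk = 1/\dim(\pi')$ with the probability measure, hence $\int_{Z\bs G}|\Phi_\pi'(g)|^2\,dg = \frac{1}{[K:K^0]\dim(\pi')}$ after accounting for the normalization. This simultaneously establishes the middle and last expressions for $\delta_\pi$.

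Then I would turn to claims (1) and (2). For (1), $R(\Phi_\pi')v_\pi' = \int_{Z\bs G}\Phi_\pi'(g)\pi(g)v_\pi'\,dg = \int_{K^0}\overline{\langle v_\pi', \pi(g)v_\pi'\rangle}\,\pi(g)v_\pi'\,dg$ up to the normalization constant; expanding $v_\pi'$ in an orthonormal basis of $\pi'$ and using Schur orthogonality again (this time in the form that picks out the reproducing/averaging projection onto the line spanned by $v_\pi'$ within $\pi'$), one finds that the integral equals $\frac{1}{[K:K^0]\dim(\pi')}v_\pi' = \delta_\pi v_\pi'$ — here I would use that $\Phi_\pi'$ is self-adjoint as a kernel (already noted in the excerpt: $\Phi_\pi'(g^{-1}) = \overline{\Phi_\pi'(g)}$), so $R(\Phi_\pi')$ is the orthogonal projection onto $\C v_\pi'$ inside $\pi'$ scaled by $\delta_\pi$. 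For (2), since $R(\Phi_\pi'\ast\Phi_\pi') = R(\Phi_\pi')\circ R(\Phi_\pi')$ as operators on any representation with central character $\omega_\pi$, and $R(\Phi_\pi')$ is (a scalar times) a rank-one projection on $\pi$ while being zero on the orthogonal complement of $\pi'$, one gets $R(\Phi_\pi')\circ R(\Phi_\pi') = \delta_\pi R(\Phi_\pi')$ on $\pi$; the identity $\Phi_\pi'\ast\Phi_\pi' = \delta_\pi\Phi_\pi'$ then follows from the fact that a function in $C_c^\infty(G,\omega_\pi^{-1})$ of this type is determined by its action on $\pi$ — more honestly, one checks the convolution identity directly by writing $(\Phi_\pi'\ast\Phi_\pi')(h)$ as an integral of products of matrix coefficients over $K^0$ and collapsing it via Schur orthogonality, which is the cleaner route and avoids any appeal to faithfulness.

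The main obstacle I anticipate is purely bookkeeping: getting the measure-normalization constant $[K:K^0]$ exactly right when passing between $\int_{Z\bs G}$ with the fixed Haar measure $dg = d_Lb\,dk$ and $\int_{K^0}$ with probability Haar measure, and making sure the same convention is used consistently in claims (1), (2), and the formula for $\delta_\pi$. The representation-theoretic content — Schur orthogonality on the compact group $K^0$ and the identification of $R(\Phi_\pi')$ with a scaled rank-one projection — is routine once Lemma~\ref{preplemma1} has supplied the irreducibility of $\pi'$. Note that the lower bound $\delta_\pi \gg q^{-n_1-m_1}$ asserted in Proposition~\ref{keymatrixprop} is \emph{not} part of this lemma; it will require a separate estimate for $[K:K^0]\dim(\pi')$, presumably carried out later in the section.
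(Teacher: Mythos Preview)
Your proposal is correct and follows essentially the same Schur-orthogonality approach as the paper. The only minor difference is in claim~(1): the paper notes that $R(\Phi_\pi')v_\pi'$ is $a(\varpi^{n_1})K_1(\p^n)a(\varpi^{-n_1})$-invariant and hence a scalar multiple of $v_\pi'$ by newform uniqueness, then identifies the scalar as $\delta_\pi$ by pairing with $v_\pi'$ --- slightly slicker than your direct basis-expansion computation, but with the same content.
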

\begin{proof}
Note that $\langle , \rangle$ is an invariant inner product for $\pi'$. It follows immediately (from the orthonormality of matrix coefficients) that the last two quantities are equal. The equality of the middle two quantities is immediate from our normalization of Haar measures.

We now show that this quantity satisfies the claims of Proposition~\ref{keymatrixprop}. First of all,  $R(\Phi_\pi')v_\pi'$ is a vector in $\pi$ that is invariant under the subgroup $a(\varpi^{n_1})K_1(\p^n)a(\varpi^{-n_1})$. It follows that $R(\Phi_\pi')v_\pi' = \delta v_\pi'$ for some constant $\delta.$ Taking inner products with $v_\pi'$ immediately shows that $\delta = \delta_\pi$. This proves the first assertion of the Proposition. The second assertion is a standard property of convolutions of matrix coefficients.
\end{proof}

\subsubsection*{Proof of Proposition~\ref{keymatrixprop} in the case of non-supercuspidal representations}
We now prove Proposition~\ref{keymatrixprop} for all non-supercuspidal representations $\pi$. It suffices to show that $$\dim(\pi') \ll q^{n_0+m_1},$$ where $\pi'$ is as in Lemma~\ref{preplemma1}.

We can embed $\pi$ inside a representation $\chi_1 \boxplus \chi_2$, consisting of
smooth functions $f$ on $G$ satisfying $$f\left(\mat{a}{b}{0}{d} g\right) =
|a/d|^{\frac12} \chi_1(a) \chi_2(d)  f(g).$$ Here $\chi_1$ and $\chi_2$ are two (not necessarily unitary) characters. Let $f'$ be the function in
$\chi_1 \boxplus \chi_2$ that corresponds to $v_\pi'$. Let $K'$ be the (normal) subgroup of $K^0$ consisting of matrices $\mat{a}{b}{c}{d}$ such that $a \equiv d \equiv 1 \pmod{\p^{n_0+m_1}}$, $b \equiv 0 \pmod{\p^{n_1+m_1}}$, $c \equiv 0 \pmod{\p^{n_0+m_1}}$. Let $V_{K'}$ be the
subspace of $\chi_1 \boxplus \chi_2$ consisting of the
functions $f$ that satisfy $f(gk) =  \omega_\pi(a) f(g)$ for all $k=\mat{a}{b}{c}{d} \in K'$. Then $f' \in V_{K'}$. Moreover (and this is the key fact!) if $k \in K^0$ and $k' \in K'$, then the top left entries of $k'$ and $kk'k^{-1}$ (both these matrices are elements of $K'$) are equal modulo $\p^{m}$. Hence the space $V_{K'}$ is  stable under the action of $K^0$. So it suffices to prove that
$\mathrm{dim}(V_{K'}) \ll q^{n_0+m_1}$.

Using the Iwasawa decomposition, it follows easily that $|B(F)\bs
G(F)/K'| \asymp q^{n_0+m_1}.$ Fix a set of double coset representatives $S$ for $B(F)\bs
G(F)/K'$. Since any element of $V_{K'}$ is uniquely
determined by its values on $S$, it follows that  $\mathrm{dim}(V_{K'}) \ll q^{n_0+m_1}$. The proof
is complete.

\subsection{Proof of Proposition~\ref{keymatrixprop} in the case of supercuspidal representations} \label{s:endmc}

We now assume that $\pi$ is supercuspidal. In this case, $m \le n_0$, hence $m_1=0$. So, we suffices to prove that \begin{equation} \label{reqprove}\int_{K^0} |\Phi_\pi(a(\varpi^{-n_1})ga(\varpi^{n_1}))|^2 dg \gg q^{-n_1}.\end{equation}

The next Proposition gives a formula for $\Phi_\pi$, which may be of independent interest.

\begin{proposition}\label{mcsupercusp} For $0 \le l<n$, we have
\begin{equation}\begin{split}\Phi_\pi(n(x)&g_{t,l,v})  =
G(-\varpi^{l-n}, 1)G(\varpi^{t+l}v^{-1}-x, 1 )\omega_\pi(-v) \delta_{t,-2l}
\\&+ \eps(\frac12, \pi) \omega_\pi(v)\sum_{\substack{\mu \in
\tilde{X} \\ a(\mu) =n-l\\  \ a(\mu \tilde{\pi}) = n-2l-t}}
G(\varpi^{l-n}, \mu)G(vx - \varpi^{t+l},  \mu)\varepsilon(\frac12,
\mu \tilde{\pi}) .\end{split}\end{equation}

\end{proposition}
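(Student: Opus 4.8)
The plan is to compute the matrix coefficient $\Phi_\pi(g)$ by realizing $\pi$ in its Whittaker model $\mathcal{W}(\pi,\psi)$ and expressing the invariant inner product as an integral of Whittaker functions. Since $\pi$ is supercuspidal, the standard fact is that for $W_1, W_2 \in \mathcal{W}(\pi,\psi)$ one has $\langle W_1, W_2\rangle = \int_{F^\times} W_1(a(y)) \overline{W_2(a(y))}\, d^\times y$ (the integral converges because supercuspidal Whittaker functions are compactly supported on $A$ modulo the obvious support condition from Lemma~\ref{whitsuppdiagtriv}, and in fact $W_\pi(a(\varpi^a))$ is supported on finitely many $a\ge 0$). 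Thus
$$\Phi_\pi(g) = \frac{1}{\langle W_\pi, W_\pi\rangle}\int_{F^\times} W_\pi(a(y)g)\, \overline{W_\pi(a(y))}\, d^\times y,$$
and the first reduction is to show $\langle W_\pi, W_\pi\rangle$ is a constant of size $\asymp 1$ (in fact one can normalize it away, or track it explicitly using the formula $W_\pi(a(\varpi^a))$ is supported on $a=0$ when $L(s,\pi)=1$, which holds for supercuspidals). Since $\overline{W_\pi(a(y))}$ forces $y \in \OF^\times$ (again Lemma~\ref{whitsuppdiagtriv} together with supercuspidality), we are left to evaluate $\int_{\OF^\times} W_\pi(a(v)n(x)g_{t,l,v'})\,\overline{\omega_\pi}(v)\, d^\times v$ after using $W_\pi(a(v)) = \omega_\pi(v) W_\pi(1) = \omega_\pi(v)$ on $\OF^\times$.

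The second step is to plug in $g = n(x) g_{t,l,v'}$ and move the $n(x)$ and the $a(v)$ inside using the Whittaker transformation property and the known formula for $W_\pi(g_{t,l,v})$ from Prop.~2.23 of~\cite{sahasupwhittaker} (the identity \eqref{basicid}), or more directly from the decomposition $W_\pi(g_{t,l,v}) = \sum_{\mu: a(\mu)\le l} c_{t,l}(\mu)\mu(v)$. One computes $a(v)n(x)g_{t,l,v'} = n(vx)a(v)a(\varpi^t)wn(\varpi^{-l}v') = n(vx)\,a(\varpi^t v)\,w\,n(\varpi^{-l}v')$; bringing this to the form $a(\varpi^{t})w n(\varpi^{-l}v'')$ times a unit-diagonal twist requires a short manipulation with $w$, $a$, $n$, absorbing the unit $v$ into a shift $w n(\varpi^{-l}v') \mapsto a(v^{-1}) w n(\varpi^{-l}v' v)\cdots$ modulo $K_1(\p^n)$. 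After carrying this out, the integral over $v\in\OF^\times$ against $\overline{\omega_\pi}(v)$ picks out the Fourier coefficients, and the Gauss sums $G(\cdot,\mu)$ appear precisely as the Mellin/Fourier transforms of the additive-character factors $\psi(vx - \cdots)$. The delta term $\delta_{t,-2l}$ comes from the $L(s,\pi)=1$ piece / the "trivial" Kirillov-model contribution where $W_\pi(a(\varpi^a))$ is nonzero only at $a=0$, and the sum over $\mu$ with $a(\mu)=n-l$, $a(\mu\tilde\pi)=n-2l-t$ comes from equating coefficients in \eqref{basicid} (the conductor constraints are exactly the support conditions on the $c_{t,l}(\mu)$, cf. the proof of Proposition~\ref{whitsuppsizeram}), with the $\varepsilon(\tfrac12,\pi)$ and $\varepsilon(\tfrac12,\mu\tilde\pi)$ factors being tracked through the functional-equation identity \eqref{basicid}. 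The $\omega_\pi(-v)$ and $\omega_\pi(v)$ prefactors arise from the $\omega_\pi(-1)$ in \eqref{basicid} and from shifting $v''$ back to $v'$.

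The main obstacle I expect is the bookkeeping in the second step: correctly tracking the $\varepsilon$-factors, the powers of $\omega_\pi(-1)$, and the exact shift $n-2l-t$ in the conductor condition, since the identity \eqref{basicid} relates $W_\pi$ and $W_{\tilde\pi}$ and one must pass through the contragredient (via Lemma~\ref{atkin-lehnerwhit}) to land on $\tilde\pi$ as stated. A secondary technical point is justifying the inner-product formula and the convergence/normalization of $\langle W_\pi, W_\pi\rangle$ in the supercuspidal case — but this is standard (e.g. it follows from the compact support of supercuspidal matrix coefficients modulo the center) and I would cite it rather than reprove it. Once the algebra is organized, matching both sides is a finite check: the $t=-2l$ term and the ramified-twist sum are the only two sources of nonvanishing, exactly as in the support analysis of Section~\ref{s:localwhit}, so the structure of the answer is forced and only the constants need care.
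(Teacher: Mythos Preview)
Your approach is essentially the same as the paper's. The paper writes the matrix coefficient as an integral in the Whittaker model, uses supercuspidality to restrict to $\OF^\times$, obtains exactly your integral (written there as $\Phi_\pi(n(x)g_{t,l,v})=\int_{\OF^\times}\psi(-ux)\omega_\pi(u)\overline{W_\pi(g_{t,l,vu^{-1}})}\,d^\times u$), and then inserts an explicit formula for $W_\pi(g_{t,l,v})$.

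The only substantive difference is that the paper quotes \cite[Prop.~2.30]{sahasupwhittaker} directly (together with Lemma~\ref{atkin-lehnerwhit}) to get the explicit formula for $W_\pi(g_{t,l,v})$ with the sum over $a(\mu)=n-l$, whereas you propose to extract it from \eqref{basicid} and then pass to the contragredient; since Prop.~2.30 is itself derived from \eqref{basicid} via the functional equation, this is the same computation with one step unpacked. Your matrix manipulation is slightly off in the details (the clean identity is $a(u)g_{t,l,v}\in g_{t,l,vu^{-1}}K_1(\p^n)$, with $vu^{-1}$ rather than $vu$), but you correctly flagged this bookkeeping as the place needing care.
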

\begin{proof}
Using the usual inner product in the Whittaker model, and the fact that $W_\pi(a(t))$ is supported on $t \in \OF^\times$, (as $\pi$ is supercuspidal) it follows that \begin{equation}\label{phiform1}\Phi_\pi(n(x) g_{t,l,v})= \int_{\OF^\times} \psi(-ux) \omega_\pi(u) \overline{W_\pi(g_{t,l,vu^{-1}})} d^\times u.\end{equation}

On the other hand, by the formula~\cite[Prop. 2.30]{sahasupwhittaker} for $W_\pi$, and using Proposition \ref{atkin-lehnerwhit} we have

\begin{align*}W_{\pi}(a(\varpi^t)&wn(\varpi^{-l}v)) = \omega_\pi(-v^{-1})\psi(-\varpi^{t+l}v^{-1})  G(\varpi^{l-n}, 1) \delta_{t,-2l} \\&  +  \eps(1/2,  \tilde{\pi}) \omega_\pi(v^{-1})\psi(-\varpi^{t+l}v^{-1}) \  \sum_{\substack{\mu \in \tilde{X} \\ a(\mu) =n-l\\  \ a(\mu \pi) = n-t-2l}} G(\varpi^{l-n}, \mu^{-1}) \ \varepsilon(1/2, \mu^{-1} \pi) \mu(-v).
\end{align*}

Substituting this into~\eqref{phiform1}, we immediately get the required result.
\end{proof}

To obtain~\eqref{reqprove}, we will need to substitute the formula from the above proposition and integrate. The following elementary lemma (which is similar to Lemma 2.6 of \cite{yueke}) will be useful; we omit its proof.

\begin{lemma}\label{integlemma}Let $f$ be a function on $G$ that is right $K_1(\p^n)$-invariant.
Then $$\int_{ G} f(g) dg = \sum_{k=0}^{n}A_k \int_{ B} f(bwn(\varpi^{-k}))db,$$
where $A_0 = (1+q^{-1})^{-1}$, $A_n = q^{n}(1+q^{-1})^{-1}$, and for $0<k<n$,
$A_k=q^{k}(1-q^{-1})(1+q^{-1})^{-1}.$
\end{lemma}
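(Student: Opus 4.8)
\emph{Proof plan.} The plan is to verify the asserted identity on a spanning set of test functions and thereby reduce it to a short list of Haar--measure computations. Both sides are linear in $f$, and every locally constant right $K_1(\p^{n})$-invariant function supported on finitely many right cosets is a finite combination of indicators $\mathbf{1}_{gK_1(\p^{n})}$, $g\in G$ (the general $f$ follows by a routine truncation/approximation); so it suffices to take $f=\mathbf{1}_{gK_1(\p^{n})}$. Since $G$ is unimodular, $dg$ is bi-invariant, and by our normalization $\int_G h\,dg=\int_B\int_K h(bk)\,dk\,d_Lb$ with $\vol(B(\OF),d_Lb)=1$, so for $S\subseteq K$ one has $\vol(S,dg)=\vol(S,dk)$. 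Hence the left-hand side equals $\vol(gK_1(\p^{n}),dg)=\vol(K_1(\p^{n}),dk)=[K:K_1(\p^{n})]^{-1}$, independent of $g$.

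For the right-hand side I would invoke the Bruhat-type decomposition $G=\bigsqcup_{k=0}^{n}B\,wn(\varpi^{-k})\,K_1(\p^{n})$, which is exactly \eqref{e:coset} once the torus direction $A$ is absorbed into $B$. Write $\gamma_k=wn(\varpi^{-k})$. For a given $g$, the set $\{b\in B:b\gamma_k\in gK_1(\p^{n})\}$ is empty unless $g$ lies in the unique piece $B\gamma_{k_0}K_1(\p^{n})$ containing it; in that case, taking the representative $g=\gamma_{k_0}$ (harmless by left-invariance of $d_Lb$), it equals the compact open subgroup $B\cap \gamma_{k_0}K_1(\p^{n})\gamma_{k_0}^{-1}$. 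Thus the lemma is equivalent to the family of scalar identities
\[
A_k=\frac{\vol(K_1(\p^{n}),\,dk)}{\vol\!\bigl(B\cap\gamma_kK_1(\p^{n})\gamma_k^{-1},\ d_Lb\bigr)},\qquad 0\le k\le n,
\]
and it remains to compute the denominators.

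To evaluate them I would factor $\gamma_k=\beta_k\eta_k$ with $\beta_k=\mat{-\varpi^{k}}{1}{0}{-\varpi^{-k}}\in B$ and $\eta_k=\mat{1}{0}{\varpi^{k}}{1}\in K$, so that $\gamma_kK_1(\p^{n})\gamma_k^{-1}=\beta_k\bigl(\eta_kK_1(\p^{n})\eta_k^{-1}\bigr)\beta_k^{-1}$ and, since $\beta_k$ normalizes $B$, $B\cap\gamma_kK_1(\p^{n})\gamma_k^{-1}=\beta_k\bigl(B\cap\eta_kK_1(\p^{n})\eta_k^{-1}\bigr)\beta_k^{-1}$. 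Left-invariance of $d_Lb$ together with the modular character $\mat{a}{b}{0}{d}\mapsto|d/a|$ of $B$ (which scales $d_Lb$ under right translation with our normalization) gives $\vol(B\cap\gamma_kK_1(\p^{n})\gamma_k^{-1})=q^{-2k}\vol(B\cap\eta_kK_1(\p^{n})\eta_k^{-1})$. Because $\eta_k\in K$, the subgroup $B\cap\eta_kK_1(\p^{n})\eta_k^{-1}$ lies in $B(\OF)$ and is cut out by explicit congruences on the entries of $\mat{\alpha}{\beta}{0}{\delta}$, so its volume against $d_Lb=d^{\times}\!\delta\,d\beta\,d^{\times}\!\alpha$ is elementary: for $k=0$ one has $n(1)\in K_1(\p^{n})$, so the conjugate subgroup is $wK_1(\p^{n})w^{-1}$ and the volume is $(q^{2n-1}(q-1))^{-1}$; for $k=n$ one has $\eta_n\in K_1(\p^{n})$, so the conjugate is just $\beta_nK_1(\p^{n})\beta_n^{-1}$; and for $0<k<n$ the congruences give volume $q^{\,k-2n+2}(q-1)^{-2}$. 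Substituting into the displayed formula and simplifying produces $A_0=(1+q^{-1})^{-1}$, $A_n=q^{n}(1+q^{-1})^{-1}$, and $A_k=q^{k}(1-q^{-1})(1+q^{-1})^{-1}$ for $0<k<n$ (I have checked $k=0,1,n$ by hand and the pattern is uniform).

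The only genuine work, and the main source of potential slips, is the bookkeeping in this last step: pinning down the direction in which the modular character of $B$ scales $d_Lb$ against the chosen Haar normalizations, and correctly describing the congruence subgroup $B\cap\eta_kK_1(\p^{n})\eta_k^{-1}$ in the intermediate range $0<k<n$, where $\eta_k$ does not normalize $K_1(\p^{n})$. The underlying group theory — that the cosets $B\gamma_kK_1(\p^{n})$ partition $G$ — needs nothing beyond \eqref{e:coset}.
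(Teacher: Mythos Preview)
Your approach is correct and essentially parallel to, though not identical with, the paper's own (commented-out) proof. Both arguments start from the disjoint decomposition $G=\bigsqcup_{k=0}^{n}B\,\gamma_k\,K_1(\p^{n})$ with $\gamma_k=wn(\varpi^{-k})$, which immediately yields the existence of constants $A_k$ making the identity hold, and then determine the $A_k$ by evaluating on well-chosen test functions.

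Where they differ is in the choice of test function. You take $f=\mathbf{1}_{gK_1(\p^{n})}$, which isolates a single $k$ on the right-hand side and gives each $A_k$ directly as the quotient $\vol(K_1(\p^{n}))/\vol\bigl(B\cap\gamma_kK_1(\p^{n})\gamma_k^{-1}\bigr)$; the factorization $\gamma_k=\beta_k\eta_k$ and the modular-character scaling then reduce this to an elementary congruence computation inside $B(\OF)$. The paper instead tests on $f_j=\mathbf{1}_{K_0(\p^{j})}$ for $1\le j\le n$: one finds $\int_B f_j(b\gamma_k)\,d_Lb=q^{-2k}$ for $k\ge j$ and $0$ otherwise, so the identity becomes the triangular system $\dfrac{1}{(q+1)q^{\,j-1}}=\sum_{k=j}^{n}A_kq^{-2k}$, which is then solved by downward induction on $j$. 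Your route avoids the induction at the cost of a slightly more delicate volume computation (the modular-function bookkeeping you flag); the paper's route trades that for an explicit description of $\{b\in B:b\gamma_k\in K_0(\p^{j})\}$ and a short linear-algebra step. Both are equally elementary and of comparable length.
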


We now complete the proof of \eqref{reqprove}. Using Lemma \ref{integlemma}, it suffices to prove that \begin{equation} \label{reqprove2}\int_{\substack{b\in B\\bwn(\varpi^{-n_1}) \in a(\varpi^{-n_1})K^0a(\varpi^{n_1})}} |\Phi_\pi(bwn(\varpi^{-n_1}))|^2 db \gg q^{-2n_1}.\end{equation}

Now, note that the quantity $z(u)n(x)a(y)wn(\varpi^{-n_1})$ lies in
$a(\varpi^{-n_1})K^0a(\varpi^{n_1})$ if and only if :

$$u = \varpi^{n_1}u', \ y =\varpi^{-2n_1}y', \ x = \varpi^{-n_1} x', \ y' \in \OF^\times, u' \in \OF^\times, \ x'\in \OF, \ y' - x' \in \p^{n_1-n_0}.$$

Hence the left side of \eqref{reqprove2} is equal to \begin{equation}\label{reqprove3}q^{-n_1}\int_{\substack{y' \in \OF^\times, \ x' \in \OF \\ x'\in y' + \p^{n_1-n_0}}} |\Phi_\pi(n(\varpi^{-n_1} x')g_{-2n_1, -n_1, y'^{-1}})|^2 dx' d^\times y'  \end{equation}

Now, we can exactly evaluate the integral in \eqref{reqprove3} using    Proposition \ref{mcsupercusp}. We expand out $|\Phi_\pi(n(\varpi^{-n_1} x')g_{-2n_1, n_1, y'^{-1}})|^2$ and observe that the main (diagonal) terms are simple to evaluate as we know the modulus-squared of Gauss sums. Indeed, the contribution to \eqref{reqprove3} from the diagonal terms is simply $$ q^{-n_1}\int_{\substack{y' \in \OF^\times, \ x' \in \OF \\ x'\in y' + \p^{n_1-n_0}}} \sum_{\substack{\mu \in \tilde{X} \\ a(\mu)=n_0 \\ a(\mu \tilde{\pi}) = n}} q^{-2n_0} \asymp q^{-2n_1}.$$ On the other hand, the contribution from the cross terms is zero. Indeed, each cross term involves an integral like $\int_{\substack{y' \in \OF^\times, \ x' \in \OF \\  y'^{-1}x'-1 \in \p^{n_1-n_0}\OF^\times}} \mu_1^{-1}\mu_2(( y'^{-1}x'-1 ) \varpi^{n_0-n_1})$ which equals 0 because of the orthogonality of characters. This completes the proof of \eqref{reqprove2}.

\section{Supnorms of global newforms}\label{s:global}
From now on, we move to a global setup and consider newforms on $\GL_2(\A)$ where $\A$ is the ring of adeles over $\Q$. For any place $v$ of $\Q$, we will use the notation $X_v$ for each \emph{local object} $X$ introduced in the previous section. The corresponding global objects will be typically denoted without the subscript $v$. The archimedean place will be denoted by $v=\infty$. We will usually denote a non-archimedean place $v$ by $p$ where $p$ is a rational prime. The set of all non-archimedean places (primes) will be denoted by $\f$.

We fix measures on all our adelic groups (like $\A$, $\GL_2(\A)$, etc.) by taking the product of the local measures
over all places (for the non-archimedean places, these local measures were normalized in Section~\ref{sec:2-notations}; at the archimedean place we fix once and for all a suitable Haar measure). We normalize the Haar measure on $\R$ to be the usual Lebesgue measure. We give all discrete groups the
counting measure and thus obtain a measure on the appropriate quotient groups.

\subsection{Statement of result}\label{s:globalstatement} As usual, let $G=\GL_2$. Let $\pi = \otimes_v \pi_v$ be an irreducible, unitary, cuspidal automorphic
representation of $G(\A)$ with central character $\omega_\pi = \prod_v
\omega_{\pi_v}$. For each prime $p$, let the integers $n_p$, $n_{1,p}$, $n_{0,p}$, $m_p$, $m_{1,p}$ be defined as in Section~\ref{s:repnew}. We put $N=\prod_p p^{n_p}$, $N_0=\prod_p p^{n_{0,p}}$, $N_1=\prod_p p^{n_{1,p}},$  $M = \prod_p p^{m_p}$, $M_1 = \prod_p p^{m_{1,p}}$.
 Thus, $N$ is the conductor of $\pi$, $M$ is the conductor of $\omega_\pi$, $N_0$ is the largest integer such that $N_0^2 |N$, and $N_1=N/N_0$ is the smallest integer such that $N|N_1^2$. Let $N_2=N_1/N_0 = N/N_0^2$. Note that $N_2$ is a squarefree integer and is the product of all the primes $p$ such that $p$ divides $N$ to an odd power. If $N$ is squarefree, then $N_2=N_1=N$ and $N_0 = 1$ while if $N$ is a perfect square then $N_0 = N_1 = \sqrt{N}$ and $N_2=1$. Note also that $M_1 = M /\gcd(M, N_1)$.

We assume that $\pi_\infty$ is a spherical principal series representation whose central character is trivial on $\R^+$. This means that $\pi_\infty \simeq \chi_1 \boxplus \chi_2,$\footnote{For two characters $\chi_1$, $\chi_2$ on $\R^\times$, we let $\chi_1
\boxplus \chi_2$ denote the principal series representation on
$G(\R)$
that is unitarily induced from the corresponding representation of  $B(\R)$; this consists of smooth functions $f$ on $G(\R)$ satisfying $$f\left(\mat{a}{b}{0}{d} g\right) = |a/d|^{\frac12} \chi_1(a) \chi_2(d)  f(g).$$} where for $i=1, 2$, we have $\chi_1 = |y|^{it} \sgn(y)^{m}$, $\chi_2 = |y|^{-it} \sgn(y)^{m}$, with $m \in \{0,1\}$,  $t \in \R \cup (-\frac{i}{2},\frac{i}{2})$.

Let $K_1(N) = \prod_{p \in \f} K_{1,p}(p^{n_p}) = \prod_{p \nmid N} G(\Z_p) \prod_{p|N}  K_{1,p}(p^{n_p})$ be the standard congruence subgroup of $G(\hat{\Z}) = \prod_{p \in \f}G(\Z_p)$; note that $K_1(N)G(\R)^+ \cap G(\Q)$ is equal to the standard congruence subgroup $\Gamma_1(N)$ of $\SL_2(\Z)$. Let $K_\infty = \SO_2(\R)$ be the maximal connected compact subgroup of $G(\R)$ (equivalently, the maximal compact subgroup of $G(\R)^+$). We say that a non-zero automorphic form $\phi \in V_\pi$ is a \emph{newform} if $\phi$
is $K_1(N)K_{\infty}$-invariant.
It is well-known that a newform $\phi$ exists and is unique up to multiples, and
corresponds to a factorizable vector $\phi= \otimes_v \phi_v$. We define
$\|\phi\|_2 = \int_{Z(\A)G(F)\bs G(\A)} |\phi(g)|^2 dg.$

\begin{remark}\label{r:adelic}If $\phi$ is a newform, then the function $f$ on $\H$ defined by $f(g(i)) = \phi(g)$ for each $g \in \SL_2(\R)$ is a Hecke-Maass cuspidal newform of level $N$ (and character $\omega_\pi$). Precisely, it satisfies the relation \begin{equation}\label{masform} f\left(\mat{a}{b}{c}{d} z\right) = \left(\prod_{p|N}\omega_{\pi, p}(d) \right)f(z) \quad \text{ for all }\mat{a}{b}{c}{d} \in \Gamma_0(N).\end{equation}  The Laplace eigenvalue $\lambda$ for $f$ is given by $\lambda = \frac{1}{4} + t^2$ where $t$ is as above. (Note that $\lambda \asymp (1+|t|)^2.$)

Furthermore, any Hecke-Maass cuspidal newform $f$ is obtained in the above manner from a newform $\phi$ in a suitable automorphic representation $\pi$. The newform $\phi$ can be directly constructed from $f$ via strong approximation. It is clear that $\sup_{g \in G(\A)} |\phi(g) | = \sup_{z\in \Gamma_0(N) \bs \H} |f(z)|$.
\end{remark}

Our main result is as follows.

\begin{theorem}\label{t:globalmain}Let $\pi$ be an irreducible, unitary, cuspidal automorphic
representation of $G(\A)$ such that $\pi_\infty \simeq \chi_1 \boxplus \chi_2$, where for $i=1, 2$, we have $\chi_1 = |y|^{it} \sgn(y)^{m}$, $\chi_2 = |y|^{-it} \sgn(y)^{m}$, with $m \in \{0,1\}$, $t \in \R \cup (-\frac{i}{2},\frac{i}{2})$. Let the integers $N_0$, $N_1$, $M_1$ be defined as above and let $\phi \in V_\pi$ be a newform satisfying $\|\phi\|_2 = 1$. Then $$\sup_{g \in G(\A)} |\phi(g)| \ll_\eps  N_0^{1/6 + \eps} N_1^{1/3+\eps} M_1^{1/2} \ (1 +|t|)^{5/12+\eps}.$$

\end{theorem}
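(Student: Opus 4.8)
The plan is to combine two distinct inputs: a bound coming from the Whittaker (Fourier) expansion, which is strong when the relevant point on a well-chosen generating domain has large imaginary part, and a bound coming from an amplified pre-trace formula, which is strong when the imaginary part is small. First I would set up a \emph{generating domain} $\mathfrak{D} \subset G(\A)$ for $|\phi|$: exploiting invariance under $Z(\A)$, $G(\Q)$, the congruence subgroup $K_1(N)K_\infty$, and the Atkin--Lehner elements at primes dividing $N$ to an odd power, I would take the finite part of $g$ to lie in $\prod_{p|N} K_p a(\varpi_p^{n_{1,p}})$ (adjusted by Atkin--Lehner at primes in $N_2$ so that $l(g_p)\le n_{0,p}$ there), and the archimedean part to be $n(x)a(y)$ with $-1/2\le x\le 1/2$ and $y \ge c N_0/N_1$ for a suitable constant $c$. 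Checking that this is a genuine generating domain is a routine strong-approximation/Iwasawa argument; the point is that it makes the finite components of $g$ land precisely in the set $\J = K_p a(\varpi_p^{n_{1,p}})$ (with $l(g_p)\le n_{0,p}$ when $n_p$ odd) to which Proposition~\ref{localwsupportfinal} applies.

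Next I would run the Whittaker expansion $\phi(g) = \sum_{q\in\Q^\times} W_\phi(a(q)g)$, factoring $W_\phi = \prod_v W_{\pi_v}$. Proposition~\ref{localwsupportfinal}(1) applied at each $p|N$ tells us the sum is supported on $q$ whose $p$-denominator is bounded by $q(g_p) \le n_{0,p}+m_{1,p}$ in valuation, i.e.\ the global denominator divides $Q := N_0 M_1$; at the archimedean place the $K$-Bessel function forces $|q| \ll (1+|t|)/y$ up to negligible tail. Writing $q = n/Q$ and applying Cauchy--Schwarz over $n$ in the range $|n| \ll Q(1+|t|)/y$, the archimedean Bessel integral contributes the usual $(1+|t|)^{1/2+\eps}$-type factor, and the local $L^2$-averages are controlled by Proposition~\ref{localwsupportfinal}(2), which gives geometric decay $q_p^{-r/4}$ off the edge of the support and hence an $L^2$-bound that is $O(q_p^{\eps})$ on average (this is the ``Ramanujan on average'' input). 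The upshot is $|\phi(g)| \ll_\eps (Q(1+|t|)/y)^{1/2+\eps} = (N_0 M_1 (1+|t|)/y)^{1/2+\eps}$; at the worst point $y \asymp N_0/N_1$ this is $M_1^{1/2} N_1^{1/2+\eps}(1+|t|)^{1/4+\eps}$, so we still need to save $(N_1/N_0)^{1/6}(1+|t|)^{1/24}$.

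That saving comes from amplification. I would form the pre-trace formula for the operator $R(\Phi')$ where $\Phi' = \bigl(\prod_{p|N}\Phi'_{\pi_p}\bigr)\cdot(\text{unramified amplifier})\cdot(\text{archimedean kernel } k_\infty)$, with $\Phi'_{\pi_p}$ the ramified test function of Section~\ref{s:mcbeg}, the unramified amplifier the standard Iwaniec--Sarnak construction using $T(\ell)$, $T(\ell^2)$ for primes $\ell$ in a dyadic range $L$, and $k_\infty$ a point-pair-invariant kernel concentrating at spectral parameter $t$ (as in Iwaniec--Sarnak, this is where the $\lambda^{5/24}$ exponent ultimately enters). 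Proposition~\ref{keymatrixprop} guarantees that $a(\varpi^{n_1})\cdot W_{\pi_p}$ is an eigenvector of $R(\Phi'_{\pi_p})$ with eigenvalue $\delta_{\pi_p}\gg q_p^{-n_{1,p}-m_{1,p}}$, so on the spectral side the contribution of $\phi$ itself is bounded below by $|\phi(g)|^2$ times $\prod_{p|N}\delta_{\pi_p} \gg (N_1 M_1)^{-1-\eps}$ times the amplifier length squared $\asymp L^{1-\eps}$; Corollary~\ref{nonneg} ensures all other spectral terms are non-negative, so they can be dropped. On the geometric side, $\Phi'$ is supported on $\prod_{p|N_2} K_p^0(\p)$ times the full maximal compact at primes in $N_0$, so the lattice-point count is governed only by the squarefree integer $N_2 = N_1/N_0$; I would invoke verbatim the counting estimates of Harcos--Templier~\cite{harcos-templier-2} and Templier~\cite{templier-sup-2} for the number of $\gamma \in G(\Q)$ with the required determinant conditions close to $g_\infty$, getting a geometric-side bound of the shape $L^{1+\eps}(1+|t|)^{\eps} + (\text{terms involving } y, L, N_2, (1+|t|))$. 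Balancing $L$ and optimizing against the Whittaker bound over the range of $y$ yields the extra $(N_1/N_0)^{1/6}(1+|t|)^{1/24}$ saving and hence the stated exponents.

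The main obstacle is bookkeeping the interaction between the ramified test function's support and the lattice-point count: one must verify that shrinking the local support at $N_2$-primes to $K_p^0(\p)$ (rather than all of $K_p$) produces exactly the congruence conditions needed for the Harcos--Templier count to apply with parameter $N_2$, while the eigenvalue lower bound $\delta_{\pi_p}\gg q_p^{-n_{1,p}-m_{1,p}}$ does not degrade. A secondary technical point is the treatment of the peaks of the local Whittaker and $K$-Bessel functions, which contribute an error term to the Cauchy--Schwarz step; this must be checked to be of smaller order than the main term, using again the support statement of Proposition~\ref{localwsupportfinal}(1) together with standard bounds on $K_{it}$ near its transition range. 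Once these are in hand, combining the two regimes (large $y$: Whittaker; small $y$: amplification) and taking the worst case over the generating domain gives the theorem.
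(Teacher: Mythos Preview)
Your proposal is correct and follows essentially the same approach as the paper: the generating domain you describe is the paper's $\J_N \times \F_{N_2}$ (built via strong approximation plus Atkin--Lehner at the $N_2$-primes, yielding $y \gg 1/N_2 = N_0/N_1$), the Whittaker bound is the paper's Proposition~\ref{fourierboundprop} proved exactly as you outline from Proposition~\ref{localwsupportfinal}, and the amplification bound is the paper's Proposition~\ref{prop:ampl} with the same ramified test function $\Phi'_{\pi_p}$, the same positivity via Corollary~\ref{nonneg}, and the same reduction of the geometric side to the Harcos--Templier count with parameter $N_2$. The paper concludes by choosing the amplifier length $\Lambda = T^{1/6}N_2^{1/3}$ and splitting at $y = T^{1/4}N_2^{-1/6}$, which is the explicit form of the balancing you sketch; the ``secondary technical point'' you flag about the Bessel transition range is exactly the extra term $(N_0 T^{1/3})^{1/2}$ in Proposition~\ref{fourierboundprop}.
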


\begin{remark} Assume that $\pi$ has trivial central character and $N_1 \asymp \sqrt{N}$ (this is the case whenever $N$ is sufficiently ``powerful"). Then we  get $\sup_{g \in G(\A)} |\phi(g)| \ll_{t, \eps} N^{\frac{1}{4}+\eps} ,$ which is a considerable improvement of the best previously known result $\sup_{g \in G(\A)} |\phi(g)| \ll_{t,\eps} N^{\frac{5}{12}+\eps}$ due to the author~\cite{sahasuplevel}.
\end{remark}

\subsection{Atkin-Lehner operators and a generating domain}\label{s:strategy}
Let $\pi$ be as in Section \ref{s:globalstatement} and $\phi \in V_\pi$ a newform. In order to prove Theorem \ref{t:globalmain} we will restrict the variable $g$ to a carefully chosen generating domain inside $G(\A)$. In order to do this, we will have to consider the newform $\phi$ along with some of its Atkin-Lehner translates. The object of this section is to explain these ideas and  describe our generating domain. The main result in this context is Proposition \ref{p:funddom} below.

We begin with some definitions.
For any integer $L$, let $\P(L)$ denote the set of distinct primes dividing $L$. For any subset $S$ of $\P(N)$, let $\eta_S, h_S \in G(\A_\f)$ be defined as follows: $\eta_{S,p} = \mat{}{1}{p^{n_p}}{}$ if $p\in S$, $\eta_{S,p} =1$ otherwise; $h_{S,p} = a(p^{n_{1,p}})$ if $p\in S$, $h_{S,p} =1$ otherwise. Define  $$K_S = \prod_{p \in S}G(\Z_p) \subset G(\A_\f), \quad J_S = K_Sh_S \subset  G(\A_\f).$$ Finally, define $$\J_S = \{g \in J_S: l(g_p)\le n_{0,p} \text{ for all } p \in S \text{ such that } n_p \text{ is odd}\}.$$ Using Lemma \ref{usefull}, we see that $g \in \prod_{p \in S}G(\Q_p)$ belongs to $\J_S$ iff $g_p \in wK_p^0(p)a(p^{n_{1,p}})$ for all $p \in S$ for which $n_{p}$ is odd. If $L$ divides $N$, we abuse notation by denoting $$h_L = h_{\P(L)}, \quad K_L = K_{\P(L)}, \quad J_L = J_{\P(L)}, \quad \J_L= \J_{\P(L)}.$$ For any $0<c < \infty$, let $D_{c}$ be the subset of $B_1(\R)^+ \simeq \H$ defined by $D_{c}:= \{n(x)a(y): x\in \R, \ y\ge c\}.$ Finally, for $L>0$, define $$\F_{L} = \{n(x)a(y) \in D_{\sqrt{3}/(2L)}: z = x+iy \text{ satisfies } |cz+d|^2 \ge 1/L \quad \forall \ (0,0) \neq (c,d) \in \Z^2\}.$$

Next, for any subset $S$ of $\P(N)$, let $\omega_{\pi}^{S} = \prod_v \omega_{\pi, v}^{S} $ be the unique character\footnote{The existence, as well as uniqueness, of the character $\omega_{\pi}^{S}$ follows from the identity $\A^\times = \Q^\times \R^+ \prod_p \Z_p^\times$.} on $\Q^\times \bs \A^\times$ with the following properties:

\begin{enumerate}
\item $\omega_{\pi, \infty}^{S}$ is trivial on $\R^+$.
\item $\omega_{\pi,p}^{S}|_{\Z_p^\times}$ is trivial if $p \in S$ and equals $\omega_{\pi,p}|_{\Z_p^\times}$ if $p \notin S$.
\end{enumerate}

Note that $\omega_{\pi}^{\P(N)} = 1$, $\omega_{\pi}^{\emptyset} = \omega_\pi$, and for each $S$, $\omega_{\pi}^{S}$ has conductor $\prod_{p \notin S}p^{m_p}$. Define the irreducible, unitary, cuspidal, automorphic representation $\pi^{S}$ by $\pi^{S} = \tilde{\pi} \otimes \omega_\pi^{S} = \pi \otimes (\omega_\pi^{-1} \omega_\pi^S).$  A key observation is that for every $S$, the representation $\pi^{S}$ has conductor $N$ and its central character $\omega_{\pi^{S}} = \omega_\pi^{-1} (\omega_{\pi}^{S})^2 $ has conductor $M$. We have $\pi^{\emptyset} = \pi$ and $\pi^{\P(N)} = \tilde{\pi}.$

\begin{lemma}The function $\phi^S$ on $G(\A)$ given by $\phi^S(g) := (\omega_\pi^{-1} \omega_\pi^S)(\det(g))\phi(g \eta_S)$ is a newform in $\pi^{S}$.

\end{lemma}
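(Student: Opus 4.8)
The plan is to build $\phi^S$ out of two moves, each easy to control. Write $\mu := \omega_\pi^{-1}\omega_\pi^S$, so that $\pi^S = \pi\otimes\mu$ by definition. First, right translation by the finite idele $\eta_S \in G(\A_\f)\subset G(\A)$ keeps us inside $V_\pi$, since $V_\pi$ is stable under the right regular action of $G(\A)$; thus $g\mapsto\phi(g\eta_S)$ again lies in $V_\pi$. Second, multiplication by $g\mapsto\mu(\det g)$ is precisely the canonical $G(\A)$-equivariant isomorphism $V_\pi \xrightarrow{\sim} V_{\pi\otimes\mu} = V_{\pi^S}$: it carries automorphic forms to automorphic forms because $\mu$ is trivial on $\Q^\times$ while $\det\gamma\in\Q^\times$ for $\gamma\in G(\Q)$, and one checks directly that it intertwines the twisted action $h\mapsto\mu(\det h)\,\pi(h)$ on $V_\pi$ with right translation on $V_{\pi^S}$. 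Composing, $\phi^S = \mu(\det(\cdot))\cdot(\pi(\eta_S)\phi)$ lies in $V_{\pi^S}$ and is nonzero; in particular it is automatically cuspidal and has the stated central character $\omega_{\pi^S} = \omega_\pi^{-1}(\omega_\pi^S)^2$.

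It remains to verify that $\phi^S$ is a \emph{newform} of $\pi^S$, i.e.\ $K_1(N)K_\infty$-invariant (recall $\pi^S$ also has conductor $N$). Invariance under $K_\infty = \SO_2(\R)$ is immediate: $\eta_S$ is a finite idele, hence commutes with $K_\infty$, and $\det$ is trivial on $\SO_2(\R)$, so the $K_\infty$-invariance of $\phi$ transfers to $\phi^S$. For $k = (k_p)_p \in K_1(N) = \prod_p K_{1,p}(p^{n_p})$ I would compute
$$\phi^S(gk) = \mu(\det g)\,\mu(\det k)\,\phi\bigl(g\eta_S\cdot(\eta_S^{-1}k\eta_S)\bigr),$$
and then handle $k' := \eta_S^{-1}k\eta_S$ place by place. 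For $p\notin S$, $\eta_{S,p}=1$ so $k'_p = k_p\in K_{1,p}(p^{n_p})$. For $p\in S$, conjugating $k_p = \mat{a}{b}{c}{d}$ by $\eta_{S,p} = \mat{}{1}{p^{n_p}}{}$ produces $\mat{d}{p^{-n_p}c}{p^{n_p}b}{a}$; since $\det k_p\in\Z_p^\times$ and $a\equiv 1$, $c\equiv 0\pmod{p^{n_p}}$ force $d\in\Z_p^\times$, this factors as $k'_p = z(d)\,\tilde k_p$ with $z(d)$ central and $\tilde k_p = \mat{1}{p^{-n_p}c/d}{p^{n_p}b/d}{a/d}\in K_{1,p}(p^{n_p})$. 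Letting $z$ be the central idele $\prod_{p\in S}z(d_p)$ and using the central-character equivariance and $K_1(N)$-invariance of $\phi$, we get $\phi(g\eta_S\cdot k') = \omega_\pi(z)\,\phi(g\eta_S)$, hence $\phi^S(gk) = \mu(\det g)\,\mu(\det k)\,\omega_\pi(z)\,\phi(g\eta_S)$. The argument then closes via the identity $\mu(\det k)\,\omega_\pi(z) = 1$: indeed $\mu_p|_{\Z_p^\times}$ is trivial for $p\notin S$ and equals $\omega_{\pi,p}^{-1}$ for $p\in S$, so $\mu(\det k)\,\omega_\pi(z) = \prod_{p\in S}\omega_{\pi,p}\bigl(d_p/\det k_p\bigr)$, and since $\det k_p\equiv d_p\pmod{p^{n_p}}$ while $\omega_{\pi,p}$ has conductor exponent $m_p\le n_p$ (because $M|N$), each factor is $1$.

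The only genuinely nontrivial point is the one just flagged: $\eta_S$ does \emph{not} normalize $K_1(N)$ on the nose — conjugation introduces a central factor $z(d_p)$ governed by the lower-right entries of $k$ — and one must see that this central discrepancy is cancelled exactly by the determinant-twist character $\mu$. This cancellation is essentially the reason $\pi^S$ is defined with the twist $\omega_\pi^{-1}\omega_\pi^S$ rather than some other character, and it uses precisely the conductor inequality $m_p\le n_p$. Everything else in the argument is formal bookkeeping with the definitions of $\eta_S$, $\omega_\pi^S$, and the newform $\phi$.
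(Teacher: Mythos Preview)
Your proof is correct and follows exactly the approach the paper has in mind: the paper's own proof just says ``It is clear that $\phi^S$ is a vector in $\pi^{S}$, and one can easily check from the defining relation that it is $K_1(N)K_{\infty}$ invariant,'' and you have carried out that check in full. Your identification of the only subtle point---that conjugation by $\eta_S$ perturbs $K_1(N)$ by a central factor which is killed exactly by the twist $\mu=\omega_\pi^{-1}\omega_\pi^S$ together with $m_p\le n_p$---is spot on.
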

\begin{proof}It is clear that $\phi^S$ is a vector in $\pi^{S}$, and one can easily check from the defining relation that it is $K_1(N)K_{\infty}$ invariant.
\end{proof}
\begin{remark} In the special case $\omega_\pi = 1$, one has $\pi^S = \pi$ for every subset $S$ of $\P(N)$. In this case, for each $S$, the involution $\pi(\eta_S)$ on  $V_\pi$ corresponds to a classical Atkin-Lehner operator, and $\phi^S = \pm \phi$ with the sign equal to the Atkin-Lehner eigenvalue. We will call the natural map on $Z(\A) G(\Q) \bs G(\A) /K_1(N)K_\infty$ induced by $g \mapsto g \eta_S$ the adelic Atkin-Lehner operator associated to $S$.
\end{remark}

Recall that $\J_{N} = \prod_{p|N_2}wK_p^0(p) a(p^{n_{1,p}}) \prod_{p|N, \ p\nmid N_2} G(\Z_p) a(p^{n_{1,p}}) \subset G(\A_\f)$. The next Proposition tells us that any point in $Z(\A) G(\Q) \bs G(\A) /K_1(N)K_\infty$ can be moved by an adelic Atkin-Lehner operator to a point whose finite part lies in $\J_N $ and whose infinite component lies in $\F_{N_2}$.

\begin{proposition}\label{p:funddom} Suppose that $g \in G(\A).$ Then there exists a subset $S$ of $\P(N_2)$ such that $$g \in Z(\A) G(\Q) \left(\J_N  \times \F_{N_2}\right) \eta_S K_1(N)K_\infty.$$
\end{proposition}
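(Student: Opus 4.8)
The plan is to use strong approximation to reduce the global statement to local statements at the primes dividing $N$, then dispatch each local factor using the double coset structure recorded in Section~\ref{sec:localcalcs}. First I would fix $g \in G(\A)$. By the Iwasawa decomposition at the archimedean place together with $G(\R)^+$ acting transitively on $\H$, and using that $Z(\A)$ and $G(\Q)$ absorb all of $\det$, I may assume $g_\infty \in B_1(\R)^+$, i.e.\ $g_\infty = n(x)a(y)$ with $y>0$. At each finite place $p \nmid N$, the component $g_p$ can be absorbed into $K_1(N) = \prod_{p\nmid N} G(\Z_p) \prod_{p|N} K_{1,p}(p^{n_p})$ after multiplying by an element of $G(\Q)$ (ordinary strong approximation for $\GL_2$, using that the class number of $\Q$ is one); so the real content is at the primes $p \mid N$.

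Next I would handle the primes $p \mid N$ dividing $N$ to an \emph{even} power (i.e.\ $p \mid N_0$, $p \nmid N_2$). Here $n_p = 2n_{0,p} = 2n_{1,p}$, so $J_p := G(\Z_p)a(p^{n_{1,p}})$, and I claim $g_p \in Z(\Q_p) G(\Q_p)\text{-nothing}$ --- more precisely, I want to show $g_p \in Z(\Q_p)\, b_p\, J_p\, K_{1,p}(p^{n_p})$ for a suitable $b_p$ that can be globalized. The cleanest route: by the double coset decomposition \eqref{e:coset} applied with $n = n_p$, every element of $G(\Q_p)$ lies in $Z N a(\varpi^{t})wn(\varpi^{-l}v)K_{1}(\p^{n_p})$; now observe that $a(\varpi^{t})wn(\varpi^{-l}v)$ with $0 \le l \le n$ lies, after multiplying on the left by an element of $B(\Q_p)$ and on the right by $K_{1,p}(p^{n_p})$, in $G(\Z_p) a(\varpi^{n_{1,p}})$ --- this is essentially the content underlying Lemma~\ref{lemmatg}, and the leftover $B(\Q_p)$ part is a unipotent-times-torus element whose diagonal can be pushed into $Z(\A)G(\Q)$ using $\A^\times = \Q^\times \R^+ \prod_p \Z_p^\times$ (this is exactly the same mechanism used to construct $\omega_\pi^S$). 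The unipotent part at $p$ gets collected, across all $p$, into a single rational $n(x_0)$ which then combines with the archimedean $n(x)$; since $|\phi|$ is $n(\Z)$-invariant on the left (it descends to $\Gamma_0(N)\bs\H$), we may shift $x$ to lie in $[-1/2,1/2]$, hence inside $\F_{N_2}$ for the $x$-coordinate.

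For primes $p \mid N_2$ (dividing $N$ to an \emph{odd} power), the new feature is that we are allowed to use the Atkin--Lehner translate $\eta_S$. Here $n_p$ is odd, $n_{1,p} = n_{0,p}+1$, and $\J_p = wK^0_p(p)a(p^{n_{1,p}})$. Running the same double-coset reduction as above lands $g_p$ in $k_p a(p^{n_{1,p}}) K_{1,p}(p^{n_p})$ modulo $Z(\Q_p)$ and a leftover $B(\Q_p)$; by Lemma~\ref{usefull}, either $l(k_pa(p^{n_{1,p}})) \le n_{0,p}$ already (put $p \notin S$) or $k_p \in N(\OF_p)K^0_p(p)$, in which case $l(k_pa(p^{n_{1,p}})) \ge n_{1,p}$, and I apply the right multiplication by $\eta_{S,p} = \mat{}{1}{p^{n_p}}{}$ (put $p \in S$): Lemma~\ref{lemma:ALnew} is precisely the statement that $k_p \eta_{S,p} w a(p^{n_{1,p}})$ --- wait, I need the version that $k_p g w a(\varpi^{n_1})$ with $g \in \{1, \mat{}{1}{\varpi}{}\}$ returns to $l \le n_0$; after adjusting by the rational $w$ absorbed into $G(\Q)$ this gives $g_p\eta_{S,p} \in \J_p \cdot K_{1,p}(p^{n_p})$ modulo $Z$ and $B(\Q_p)$. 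Collecting $S = \{p \mid N_2 : \text{the } l \ge n_{1,p} \text{ case occurs}\}$, one gets $S \subseteq \P(N_2)$ as required.

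Finally I would assemble: the leftover torus/unipotent pieces at all finite places, times the archimedean $n(x)a(y)$, get combined; the diagonal torus part is swallowed by $Z(\A)G(\Q)$ (changing $y$ by a rational square times a unit --- this is where the bound $y \ge \sqrt{3}/(2N_2)$ rather than $y \ge \sqrt{3}/2$ comes from, since we can only adjust $y$ by factors built from the $p^{n_{1,p}}$, $p\mid N_2$, and the resulting worst-case is $N_2$ in the denominator); the unipotent part is pushed into $n(\Z)$; and then the classical reduction theory for $\SL_2(\Z)$ acting on $\H$ (together with the constraint $y \ge \sqrt{3}/(2N_2)$) places the archimedean component inside $\F_{N_2}$, the two defining conditions of $\F_{N_2}$ being exactly the output of Siegel-domain reduction scaled by $N_2$. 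I expect the main obstacle to be bookkeeping: tracking which rational matrices are being inserted, verifying they simultaneously land every local component where it should go (the $\eta_S$ must be chosen globally but affects only places in $S$), and getting the exact constant $\sqrt 3/(2N_2)$ and the exact shape of $\F_{N_2}$ out of the reduction --- none of these steps is deep, but the interplay between the global rational adjustments and the per-place local normal forms is delicate and is where an error would most likely creep in.
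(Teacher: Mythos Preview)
Your proposal has a genuine gap: the choice of $S$ and the archimedean reduction into $\F_{N_2}$ cannot be carried out independently in the order you describe. You fix $S$ first, by inspecting the local invariant $l(k_p a(p^{n_{1,p}}))$ at each $p\mid N_2$, and then propose to use ``classical reduction theory for $\SL_2(\Z)$'' to move the archimedean component into $\F_{N_2}$. But left multiplication by a general $\gamma\in\SL_2(\Z)$ does \emph{not} preserve the finite-place condition at primes $p\mid N_2$: the set $\J_p = wK^0_p(p)a(p^{n_{1,p}})$ is not left-$G(\Z_p)$-invariant (only left-$K^0_p(p)$-invariant, roughly), so your carefully arranged $l\le n_{0,p}$ condition would be destroyed. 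Conversely, if you restrict yourself to $\gamma$ that do preserve the finite conditions, you are essentially acting by $\Gamma_0(N_2)$, whose fundamental domain touches the real line and does not sit inside $\F_{N_2}$. Your explanation of the bound $y\ge\sqrt{3}/(2N_2)$ as coming from rescaling $y$ by the $p^{n_{1,p}}$ is not correct either; that bound is a genuine feature of the Atkin--Lehner fundamental domain for squarefree level $N_2$.

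The paper proceeds in the opposite order and uses an essential external input you have not mentioned. First strong approximation is applied once, giving $g \in Z(\A)G(\Q)\,g_\infty^+\, w_N h_N K_1(N)$ (so the finite part is already in a uniform shape $w_N h_N K_1(N)$, with no case distinction yet). Then Lemma~1 of Harcos--Templier \cite{harcos-templier-1} is invoked: for the point $g_\infty^+\cdot i$ it produces a divisor $N_3\mid N_2$ and an integral matrix $W$ of determinant $N_3$, satisfying specific congruences mod $N_3$ and mod $N_2$, such that $W_\infty g_\infty^+ \in \F_{N_2}K_\infty$. The set $S$ is then \emph{defined} to be $\P(N_3)$, and the congruences on $W$ ensure that at each $p\mid N_2$ the local component $W_p$ lies in $K_{0,p}(p)$ or $K_{0,p}(p)\begin{bmatrix}0&1\\p&0\end{bmatrix}$; Lemma~\ref{lemma:ALnew} is what then puts the finite part into $\J_N\,\eta_S$. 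So $S$ is dictated by the archimedean reduction, not by local finite data, and the Harcos--Templier lemma is precisely the device that makes the archimedean and finite constraints compatible.
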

\begin{proof}  Let $w_N$ be the diagonal embedding of $w=\mat{0}{1}{-1}{0}$ into $K_N$. The determinant map from $w_N h_{N}K_1(N)h_{N}^{-1}w_N^{-1}$ is surjective onto $\prod_p \Z_p^\times$. Hence by strong approximation for $gh_{N}^{-1}w_N^{-1}$, we can write $gh_{N}^{-1}w_N^{-1} = z g_\Q g_\infty^+ (w_N h_{N}k h_{N}^{-1}w_N^{-1})$ where $z \in Z(\A)$, $g_\Q \in G(\Q)$, $g_\infty^+ \in G(\R)^+$, $k \in K_1(N)$. In other words, \begin{equation}\label{e:aleq1}g \in Z(\A)G(\Q) g_\infty^+ w_N h_{N}K_1(N).\end{equation} Using Lemma 1 from \cite{harcos-templier-1}, we can find a divisor $N_3$ of $N_2$, and a matrix $W \in M_2(\Z)$ such that $$W \equiv \begin{bmatrix}0&*\\0&0\end{bmatrix} \bmod{N_3}, \ W \equiv \begin{bmatrix}*&*\\0&*\end{bmatrix} \bmod{N_2}, \ \det(W)=N_3,  \ W_\infty g_\infty^+ \in \F_{N_2}K_\infty.$$ Above, $W_\infty$ denotes the element $W$ considered as an element of $G(\R)^+$. Let $S$ be the set of primes dividing $N_3$. Note that $W_p \in K_{0,p}(p)\mat{0}{1}{p}{0}$ if $p \in S$, $W_p \in K_{0,p}(p)$ if $p|N_2$ but $p \notin S$, and $W_p \in G(\Z_p)$ if $p \nmid N_2$. Since $W \in G(\Q)$, it follows from the above and from \eqref{e:aleq1} that \begin{align*}g &\in Z(\A)G(\Q) \F_{N_2}K_\infty  \left(\prod_{p \in  S} K_{0,p}(p)\mat{0}{1}{p}{0}  w \right)\left(\prod_{\substack{p|N_2 \\ p\notin S}} K_{0,p}(p)w\right) \left(\prod_{\substack{p|N \\ p \nmid N_2}} G(\Z_p)\right) h_{N}K_1(N) \\ &= Z(\A) G(\Q) \left(\J_N  \times \F_{N_2}\right) \eta_S K_1(N)K_\infty,\end{align*} where in the last step we have used Lemma \ref{lemma:ALnew}.
\end{proof}

\begin{corollary}\label{cor:supnorm}Let $\pi$, $\phi$ be as in Theorem \ref{t:globalmain}. Suppose that for all subsets $S$ of $\P(N_2)$ and all $g \in \J_N $, $n(x)a(y) \in \F_{N_2}$, we have $$|\phi^S(g n(x)a(y))| \ll_\eps N_1^{1/2+\eps} M_1^{1/2} N_2^{-1/6} \ (1 +|t|)^{5/12+\eps}.$$ Then the conclusion of Theorem \ref{t:globalmain} is true.
\end{corollary}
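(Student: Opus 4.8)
The plan is to derive the Corollary as a formal consequence of Proposition~\ref{p:funddom} together with the bookkeeping identity $N_1 = N_0 N_2$. Proposition~\ref{p:funddom} has already carried out all of the geometry: it reduces the problem of bounding $|\phi|$ on all of $G(\A)$ to bounding, for each subset $S\subseteq\P(N_2)$, the Atkin--Lehner translate $\phi^S$ on the small explicit set $\J_N\times\F_{N_2}$, which is exactly the quantity controlled by the hypothesis of the Corollary. So the deduction is essentially a matter of tracking invariance properties and simplifying exponents.

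In detail, fix $g\in G(\A)$. Since $\omega_\pi$ is unitary, $|\phi|$ is left-invariant under $Z(\A)G(\Q)$, and $\phi$ is right-invariant under $K_1(N)K_\infty$ by definition of a newform. Applying Proposition~\ref{p:funddom}, choose $S\subseteq\P(N_2)$ together with $z\in Z(\A)$, $\gamma\in G(\Q)$, $g_0\in\J_N$, $n(x)a(y)\in\F_{N_2}$ and $k\in K_1(N)K_\infty$ with $g = z\gamma\,\big(g_0\,n(x)a(y)\big)\,\eta_S\,k$. Writing $u = g_0\,n(x)a(y)$, the left $Z(\A)G(\Q)$-invariance of $|\phi|$ and the right $K_1(N)K_\infty$-invariance of $\phi$ give $|\phi(g)| = |\phi(u\eta_S)|$. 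By the definition $\phi^S(u) = (\omega_\pi^{-1}\omega_\pi^S)(\det u)\,\phi(u\eta_S)$, and since $\omega_\pi^{-1}\omega_\pi^S$ is a unitary character (its square $\omega_\pi^{-1}\omega_{\pi^S}$ is, being a product of central characters of unitary representations), we obtain $|\phi(g)| = |\phi^S(u)| = |\phi^S(g_0\,n(x)a(y))|$. Note that $\|\phi^S\|_2 = \|\phi\|_2 = 1$ by right-translation invariance of Haar measure, so the hypothesis is indeed applicable to $\phi^S$.

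Now invoke the hypothesis: $|\phi^S(g_0\,n(x)a(y))| \ll_\eps N_1^{1/2+\eps}M_1^{1/2}N_2^{-1/6}(1+|t|)^{5/12+\eps}$, with an implied constant independent of $g$ (and of $S$, of which there are only boundedly many). Since $N_1 = N_0 N_2$, we have $N_1^{1/2}N_2^{-1/6} = N_1^{1/2}(N_1/N_0)^{-1/6} = N_0^{1/6}N_1^{1/3}$, so the bound becomes $|\phi(g)| \ll_\eps N_0^{1/6}N_1^{1/3+\eps}M_1^{1/2}(1+|t|)^{5/12+\eps} \ll_\eps N_0^{1/6+\eps}N_1^{1/3+\eps}M_1^{1/2}(1+|t|)^{5/12+\eps}$. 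Taking the supremum over $g\in G(\A)$ yields precisely the conclusion of Theorem~\ref{t:globalmain}. There is no genuine obstacle in this step; the only care needed is in the invariance bookkeeping and in noticing that the set $\J_N\times\F_{N_2}$ on which the hypothesis is stated is exactly the one produced by Proposition~\ref{p:funddom}. The real content of the theorem lies elsewhere — in Proposition~\ref{p:funddom} and in establishing the hypothesis itself via the Whittaker expansion (Proposition~\ref{localwsupportfinal}) and the amplified pre-trace formula (Proposition~\ref{keymatrixprop}).
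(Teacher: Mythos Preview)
Your proof is correct and follows essentially the same route as the paper: apply Proposition~\ref{p:funddom}, use the identity $|\phi^S(u)|=|\phi(u\eta_S)|$ (which the paper states as the entire proof), and simplify via $N_1=N_0N_2$. You have simply unpacked the invariance bookkeeping and the exponent arithmetic that the paper leaves implicit. One minor remark: your parenthetical justification for the unitarity of $\omega_\pi^{-1}\omega_\pi^S$ is a bit garbled (its ``square'' is not really the relevant point); it suffices to note that $\omega_\pi$ is unitary and that $\omega_\pi^S$, being a Hecke character trivial on $\R^+$, is of finite order.
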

\begin{proof}This follows from the above Proposition and the fact $|\phi^S(g n(x)a(y))| = |\phi(g n(x)a(y) \eta_S)|.$
\end{proof}

\subsection{Sketch of proof modulo technicalities}\label{s:proofsketch}
In this subsection, we prove Theorem \ref{t:globalmain} assuming some key bounds whose proofs will take the rest of this paper. For brevity we put $T=1+|t|$. Also, recall that $N_2=N_1/N_0$. We need to show that for each $g \in G(\A)$, $$|\phi(g)| \ll_\eps N_1^{1/2+\eps} M_1^{1/2} N_2^{-1/6} \ T^{5/12+\eps}.$$ By letting $\phi$ run over all its various Atkin-Lehner translates $\phi^S$, $S \subseteq \P(N_2)$,  we may assume (by Corollary \ref{cor:supnorm}) that $g  \in \J_N \F_{N_2}$. Therefore in what follows, we will not explicitly keep track of the set $S$, but instead prove the following: \emph{Given an  automorphic representation $\pi$ as in Section \ref{s:globalstatement} (with associated quantities $N_1$, $N_2$, $T$, $M_1$ as defined earlier), a newform $\phi \in V_\pi$ satisfying $\|\phi\|_2 =1$, and elements $g \in \J_N $, $n(x)a(y) \in \F_{N_2}$, we have}
\begin{equation}\label{e:reqbdmain}|\phi(g n(x)a(y))| \ll_\eps N_1^{1/2+\eps} M_1^{1/2} N_2^{-1/6} \ T^{5/12+\eps}.\end{equation}

As noted, the above statement implies Theorem \ref{t:globalmain}. Implicit here is the fact that  we are letting $\pi$ vary among the various $\pi^S$, which all have exactly the same values of $N, N_1, N_2, M_1, T$ as $\pi$ does, and moreover the corresponding newforms $\phi^S$ all satisfy $\|\phi^S\|_2 = \|\phi\|_2$.

We will prove \eqref{e:reqbdmain} by a combination of two methods. First, in Proposition \ref{fourierboundprop}, we will use the Whittaker expansion to bound this quantity. Precisely, we will prove the following bound:
\begin{equation}\label{finalfourierbd}|\phi(g n(x)a(y))| \ll_\eps  (NT)^\eps \left( \left(\frac{N_1 M_1 T}{N_2y} \right)^{1/2} + \left(\frac{N_1 T^{1/3}}{N_2} \right)^{1/2} \right).   \end{equation}
To prove the above bound, we will rely on Proposition \ref{localwsupportfinal}. Next, in Proposition \ref{prop:ampl}, we will use the amplification method to bound this quantity.  We will prove that for each $\Lambda \ge 1$, we have
\begin{equation}\label{finalamplbd}|\phi(g n(x)a(y))|^2 \ll_\eps (NT\Lambda)^\eps \ N_1 M_1 \left[\frac{T + N_2^{1/2}T^{1/2}y}{\Lambda}   + \Lambda^{1/2}T^{1/2}(N_2^{-1/2} + y)  + \Lambda^2 T^{1/2}N_2^{-1}  \right].   \end{equation}
The proof of this bound will rely on Proposition \ref{keymatrixprop} and some counting arguments due to Harcos and Templier. Combining the two bounds will lead to Theorem~\ref{t:globalmain}, as we explain now.

Choose $\Lambda = T^{1/6}N_2^{1/3}$. Then~\eqref{finalamplbd} becomes \begin{equation}\begin{split}\label{finalamplbd2}
|\phi(g n(x)a(y))|^2 \ll_\eps (NT)^\eps \ N_1 M_1 &\bigg[T^{5/6}N_2^{-1/3}   + T^{7/12}N_2^{-1/6}y   \bigg]\end{split}.   \end{equation}

If $y \le  T^{1/4}N_2^{-1/6}$, then we use \eqref{finalamplbd2} to immediately deduce~\eqref{e:reqbdmain}. If $y \ge T^{1/4}N_2^{-1/6}$, then we use~\eqref{finalfourierbd} to obtain the  bound
\begin{equation}\label{finalfourierbd2}|\phi(g n(x)a(y))| \ll_\eps (NT)^\eps M_1^{1/2}N_1^{1/2}N_2^{-5/12} T^{3/8}  \end{equation} which is much stronger than~\eqref{e:reqbdmain}! This completes the proof.

\subsection{The bound via the Whittaker expansion}\label{s:fourierglobal}
Let $\pi$, $\phi$ be as in Section \ref{s:globalstatement} with $\| \phi \|_2=1$. The object of this section is to prove the following result.

\begin{proposition}\label{fourierboundprop}Let $x \in \R$, $y \in \R^+$, $g \in \J_N $. Then $$|\phi(g n(x)a(y))| \ll_\eps (NT)^\eps \left( \left(\frac{N_0 M_1 T}{y} \right)^{1/2} + \left(N_0 T^{1/3} \right)^{1/2} \right).                     $$

\end{proposition}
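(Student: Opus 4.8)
The plan is to bound $|\phi(g n(x)a(y))|$ by expanding $\phi$ in its Whittaker expansion and controlling the number of nonzero terms together with their average size. First I would fix the adelic setup: since $\phi$ is a newform in a cuspidal representation, it admits the Whittaker expansion $\phi(h) = \sum_{q \in \Q^\times} W_\phi(a(q)h)$, where $W_\phi = \prod_v W_{\phi_v}$ factors as a product of local Whittaker functions, with $W_{\phi_\infty}$ the archimedean Whittaker function (a $K$-Bessel function up to normalization) and $W_{\phi_p} = W_{\pi_p}$ the normalized local Whittaker newform at each finite $p$ (up to the correction that the newform is normalized by $\|\phi\|_2 = 1$ rather than $W_\phi(1)=1$; the discrepancy is $(N_1 M_1)^{o(1)}$-controlled, essentially by the Rankin--Selberg computation of $\langle\phi,\phi\rangle$ in terms of $L(1,\pi,\mathrm{ad})$, which I would cite or reproduce briefly). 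Writing $h = g\, n(x) a(y)$ with $g \in \J_N$ finite and $n(x)a(y)$ archimedean, the sum over $q$ decouples place by place.

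Next I would pin down the support in $q$. At a finite prime $p \mid N$, since $g_p \in K_p a(p^{n_{1,p}})$ with $l(g_p) \le n_{0,p}$ when $n_p$ is odd, Proposition~\ref{localwsupportfinal}(1) tells us $W_{\pi_p}(a(u)g_p) \neq 0$ forces $v_p(u) \ge -q(g_p)$, and $q(g_p) \le n_{0,p} + m_{1,p}$; at $p \nmid N$ the Whittaker newform is unramified and $W_{\pi_p}(a(u)) \neq 0$ forces $v_p(u) \ge 0$. Hence the denominator of $q$ divides $\prod_{p\mid N} p^{n_{0,p}+m_{1,p}} = N_0 M_1$, so we may write $q = r/(N_0 M_1)$ with $r \in \Z_{\neq 0}$. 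At the archimedean place, $W_{\phi_\infty}(a(q)n(x)a(y)) = e(qx) W_{\phi_\infty}(a(qy))$ and the $K$-Bessel asymptotics give exponential decay once $|qy| \gg T = 1+|t|$; thus effectively $|r| \ll N_0 M_1 T / y$. So $\phi$ is a sum of $O_\eps((NT)^\eps \cdot N_0 M_1 T/y)$ terms, after cutting off the tail at negligible cost.

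Then I would estimate the sum by Cauchy--Schwarz, separating the archimedean Bessel factor from the finite Whittaker factors. The key input is Proposition~\ref{localwsupportfinal}(2): for each $p$, summing $|W_{\pi_p}(a(\varpi^b v)g_p)|^2$ over $v \in \OF_p^\times/U_{n_0(g_p)}$ is $\ll q_p^{-r_p/2}$ when $b = -q(g_p) + r_p$, i.e.\ the local Whittaker newform satisfies a Ramanujan-on-average bound, and the total $\ell^2$-mass over all $b$ is bounded (geometric series in $q_p^{-1/2}$). Globally this yields $\sum_{r, |r| \ll N_0 M_1 T/y} |W_{\phi,\fin}(a(r/(N_0M_1))g)|^2 \ll (NT)^\eps$ roughly — more precisely the $\ell^2$ mass is $\ll (NT)^\eps$ per ``block'' and the number of blocks is controlled — so that
\begin{equation}
|\phi(g n(x)a(y))| \ll_\eps (NT)^\eps \Big(\tfrac{N_0 M_1 T}{y}\Big)^{1/2} \sup_{r} |W_{\phi_\infty}(a(ry/(N_0M_1)))| + (\text{peak terms}).
\end{equation}
Using the uniform bound $|W_{\phi_\infty}(a(u))| \ll \min(|u|^{1/2}, \dots)$ and $|u| \le T$ in the relevant range, the main term is $\ll (NT)^\eps (N_0 M_1 T/y)^{1/2}$. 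The secondary term $(N_0 T^{1/3})^{1/2}$ comes from the region where $y$ is large so that only few $r$ survive but the archimedean Bessel function has its transitional peak of size $\asymp T^{1/6}$ near $|u| \asymp T$; carefully, this contributes $\ll (N_0)^{1/2} T^{1/6} \cdot (NT)^\eps = (N_0 T^{1/3})^{1/2}(NT)^\eps$.

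The main obstacle is the second step made quantitative: showing that the total $\ell^2$-mass of the finite Whittaker coefficients over the full effective range of $q$ is only $(NT)^\eps$, not larger. This requires combining Proposition~\ref{localwsupportfinal}(2) across all $p \mid N$ simultaneously (a product over local $\ell^2$-estimates, each a convergent geometric series), and handling the ``peak'' contributions where a local Whittaker function or the $K$-Bessel function is abnormally large — these are precisely the lower-order terms alluded to in the introduction's footnote, and bounding them uses Proposition~\ref{localwsupportfinal}(1) to limit how often peaks can occur. Once the $\ell^2$ bookkeeping is done, assembling the two regimes ($y$ small: first term dominates; $y$ large: second term from the Bessel peak) gives exactly the stated bound.
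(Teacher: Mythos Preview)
Your overall strategy---Whittaker expansion, support via Proposition~\ref{localwsupportfinal}(1), truncation via Bessel decay, Cauchy--Schwarz, and the $\ell^2$-bound from Proposition~\ref{localwsupportfinal}(2)---is exactly the paper's. But there is a genuine gap and a structural difference worth flagging.

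\textbf{The gap: unramified Hecke eigenvalues.} You never mention the contribution from primes $p\nmid N$. After factoring $W_{\phi,\fin}$, the unramified part is (up to a factor $|n_0|^{-1/2}$) the Hecke eigenvalue $\lambda_\pi(n_0)$ with $n_0 = n/(n,N^\infty)$. These are \emph{not} pointwise bounded, and Proposition~\ref{localwsupportfinal} says nothing about them. Controlling them requires the Rankin--Selberg input $\sum_{n\le X}|\lambda_\pi(n)|^2 \ll X(NTX)^\eps$ (Lemma~\ref{rholambdalemma}(3) in the paper). Without this, your claim that $\sum_{|r|\ll N_0M_1T/y}|W_{\phi,\fin}|^2 \ll (NT)^\eps$ cannot be justified; and in fact it is not true in the form you state, since even in the case $N=1$ the sum carries a factor $e^{\pi t}$ from $|\rho_\phi(1)|^2$ which only cancels against the archimedean side.

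\textbf{The Cauchy--Schwarz split.} You separate (all finite Whittaker) from (archimedean). The paper instead writes the summand as $(n,N^\infty)^{1/2}\rho_\phi\bigl(n/(n,N^\infty)\bigr)\cdot \lambda_{\pi_N}(n;g)K_{it}(\cdot)$ and applies Cauchy--Schwarz to separate the \emph{unramified Hecke factor} from the \emph{ramified Whittaker times Bessel factor}. This is the key maneuver: it puts $|\lambda_\pi|^2$ into one sum (handled by Rankin--Selberg, Lemma~\ref{l:secondbd}) and $|\lambda_{\pi_N}(n;g)K_{it}|^2$ into the other (handled by Proposition~\ref{localwsupportfinal}(2) block-by-block together with the Bessel bound $te^{\pi t}|K_{it}(u)|^2\ll \min(T^{1/3},|u/T-1|^{-1/2})$, Lemma~\ref{l:firstbound}). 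Your split leaves the two types of factors entangled, and your displayed inequality---$(\ell^2\text{-mass})^{1/2}\cdot(\#\text{terms})^{1/2}\cdot\sup|W_\infty|$---is not Cauchy--Schwarz but Cauchy--Schwarz followed by the trivial $\ell^2\le\ell^\infty\cdot\sqrt{\#}$, which is wasteful and does not directly yield the stated bound.

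Once you separate $\lambda_\pi$ from $\lambda_{\pi_N}\cdot K_{it}$, the two terms $(N_0M_1T/y)^{1/2}$ and $(N_0T^{1/3})^{1/2}$ fall out of the single Lemma~\ref{l:firstbound} computation; they are not two regimes to be patched but the two terms of one integral.
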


 \begin{remark}If we assume Conjecture~\ref{locallindelof} stated earlier, then we can improve the bound in Proposition~\ref{fourierboundprop} to $(NT)^\eps \left( \left(\frac{N_0 M_1 T}{y} \right)^{1/2} + \left( T^{1/3} \right)^{1/2} \right).$
 \end{remark}

 We now  begin the proof of Proposition~\ref{fourierboundprop}. One has the usual Fourier expansion at infinity \begin{equation}\label{fourier}\phi(n(x)a(y)) = y^{1/2} \sum_{n \in \Z_{\ne 0}} \rho_{\phi}(n)K_{it}(2 \pi |n| y) e(nx).\end{equation}

The next Lemma notes some key properties about the Fourier coefficients appearing in the above expansion.

\begin{lemma}\label{rholambdalemma}The Fourier coefficients $\rho_{\phi}(n)$ satisfy the following properties.

\begin{enumerate}
\item $|\rho_{\phi}(n)| =|\rho_{\phi}(1)\lambda_\pi(n)|$ where $\lambda_\pi(n)$ are the coefficients of the $L$-function of $\pi$.

\item   $|\rho_{\phi}(1)| \ll_\eps   (NT)^\eps e^{\pi t/2}.$

\item $\sum_{1 \le |n| \le X} |\lambda_\pi(n)|^2 \ll X (NTX)^\eps$.

\end{enumerate}

\end{lemma}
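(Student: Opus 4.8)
All three assertions are standard facts about Fourier coefficients of Hecke--Maass newforms; the plan is to reduce to the classical picture of Remark~\ref{r:adelic} and then assemble known ingredients, so I expect the eventual write-up to be short and largely a pointer to the literature.

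For part (1) I would pass to the classical newform $f$ on $\H$ and use that $f$ is a simultaneous eigenform of all the Hecke operators $T(\ell)$ ($\ell\nmid N$), of the operators $U(p)$ ($p\mid N$), and of the Atkin--Lehner operators. Substituting these eigenvalue relations into the Fourier expansion $f(x+iy)=\sqrt y\sum_{n\neq0}\rho_\phi(n)K_{it}(2\pi|n|y)e(nx)$ yields, by the standard argument deriving the Hecke relations for Fourier coefficients from the eigenform property, the identity $\rho_\phi(n)=\rho_\phi(1)\lambda_\pi(n)$ for every integer $n\ge1$, where $L(s,\pi)=\sum_{n\ge1}\lambda_\pi(n)n^{-s}$ by definition. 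Adelically this is the factorization $W_\phi=\prod_v W_{\phi_v}$ together with the fact that $\sum_{k\ge0}W_{\phi_p}(a(\varpi^k))X^k$ is the normalized local $L$-factor of $\pi_p$, valid also at $p\mid N$; the $q^{-k/2}$ appearing in the Casselman--Shalika formula cancels against the $|n|^{1/2}$ relating $\rho_\phi(n)$ to $W_{\phi_\infty}$. Finally the reflection $z\mapsto-\bar z$ (or complex conjugation) multiplies $\rho_\phi(n)$ by a constant of modulus $1$ when $n$ changes sign, so $|\rho_\phi(-n)|=|\rho_\phi(n)|$ (and I set $\lambda_\pi(n):=\lambda_\pi(|n|)$ for $n<0$), completing (1).

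For part (3) I would use Rankin--Selberg theory. The Dirichlet series $\sum_{n\ge1}|\lambda_\pi(n)|^2n^{-s}$ has non-negative coefficients and, by the Euler product and the Hecke relations, factors as $L(s,\pi\times\tilde\pi)\,\zeta(2s)^{-1}H(s)$, where $H$ is an Euler product over $p\mid N$ that converges absolutely and is bounded above and below by $(NT)^{\pm\eps}$ on $\Re(s)\ge1$. Since $L(s,\pi\times\tilde\pi)$ is holomorphic on $\Re(s)\ge1$ except for a simple pole at $s=1$ whose residue is $L(1,\mathrm{ad}\,\pi)$ times a bounded ramified factor and hence is $\ll_\eps(NT)^\eps$, the full Dirichlet series has a simple pole at $s=1$ with residue $\ll_\eps(NT)^\eps$; Landau's Tauberian theorem for Dirichlet series with non-negative coefficients then gives $\sum_{1\le n\le X}|\lambda_\pi(n)|^2\ll_\eps X(NTX)^\eps$, and the negative indices contribute the same, giving (3).

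Part (2) is the only genuinely analytic point, and the plan is a Rankin--Selberg unfolding computing $\|\phi\|_2^2=1$. Pairing the level-$N$ Eisenstein series $E(z,s)$ against $|f|^2$ and taking residues at $s=1$ of both sides: the geometric side gives $\mathrm{Res}_{s=1}E(z,s)\cdot\int_{Y_0(N)}|f|^2\,d\mu$, which is an absolute constant since $\|f\|_2=1$ with the volume of $Y_0(N)$ normalized to $1$; the spectral side gives $|\rho_\phi(1)|^2$ times the residue at $s=1$ of $\sum_n|\lambda_\pi(n)|^2n^{-s}$ times the value at $s=1$ of the archimedean Mellin transform $\int_0^\infty K_{it}(2\pi y)^2y^{s-1}\,dy$. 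The last integral at $s=1$ is a constant multiple of $|\Gamma(\tfrac12+it)|^2=\pi/\cosh(\pi t)$, which is $\asymp e^{-\pi|t|}$ for $t$ real (and bounded above and below by absolute constants for $t$ imaginary, using a nontrivial bound towards Selberg's conjecture), while the residue of the Dirichlet series at $s=1$ lies between $(NT)^{-\eps}$ and $(NT)^\eps$ by the discussion of (3) together with the Hoffstein--Lockhart lower bound $L(1,\mathrm{ad}\,\pi)\gg_\eps(NT)^{-\eps}$. Solving for $|\rho_\phi(1)|^2$ gives $|\rho_\phi(1)|^2\ll_\eps(NT)^\eps e^{\pi|t|}$, and since $K_{it}=K_{-it}$ we may take $t\ge0$, which is exactly (2). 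I expect the only real work here to be bookkeeping --- tracking the archimedean Gamma factors, the normalizations of the Eisenstein series and of $\|\cdot\|_2$, and the ramified Euler factors at $s=1$ --- rather than anything conceptually new; all these ingredients are classical and appear in the work of Iwaniec--Sarnak and of Harcos--Templier.
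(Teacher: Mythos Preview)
Your proposal is correct and follows the same approach as the paper, which simply records that (1) is the standard Hecke-eigenvalue/Fourier-coefficient relation, (2) is Hoffstein--Lockhart, and (3) comes from the analytic properties of the Rankin--Selberg $L$-function; you have merely unpacked these citations. One small caveat on (3): invoking ``Landau's Tauberian theorem'' yields an asymptotic without a uniform error term in $N,T$, so to obtain the stated bound $\ll X(NTX)^\eps$ you should instead use the elementary estimate $\sum_{n\le X}|\lambda_\pi(n)|^2 \le X^{1+1/\log X}\,D(1+1/\log X)$ together with a bound $D(\sigma)\ll_\eps (NT)^\eps(\sigma-1)^{-1}$ near $\sigma=1$ (or a short Perron contour shift), which is what the cited literature does.
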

\begin{proof}All the parts are standard. The first part is a basic well-known relation between the Fourier coefficients and Hecke eigenvalues. The second part is due to Hoffstein-Lockhart~\cite{HL94}. The last part follows from the analytic properties of the Rankin-Selberg $L$-function (e.g., see~\cite{harcos-michel}).
\end{proof}

The Fourier expansion~\eqref{fourier} is a special case of the more general Whittaker expansion that we describe now. Let $g_\f \in G(\A_\f)$. Then the Whittaker expansion for $\phi$ says that \begin{equation}\label{whittakerexp}
\phi(g_\f n(x)a(y) )=
\sum_{q \in \mathbb{Q}_{\neq 0}}
W_\phi(a(q) g_\f n(x)a(y))\end{equation} where $W_\phi$ is a global Whittaker newform
corresponding to $\phi$ given explicitly by
$$W_\phi(g) = \int_{x \in \mathbb{A} / \mathbb{Q}} \phi(n(x) g)
\psi(-x) \, d x.$$ Putting $g_\f =1$ in \eqref{whittakerexp} gives us the expansion~\eqref{fourier}. On the other hand, the function $W_\phi$ factors as $W_\phi(g) = c \prod_v W_v(g_v)$ where
\begin{enumerate}
\item $W_p =  W_{\pi_p}$ at all finite primes $p$.

\item $|W_\infty(a(q)n(x)a(y))| = |qy|^{1/2} |K_{it}(2 \pi |q| y)|$

\end{enumerate}
The constant $c$ is related to $L(1, \pi, \mathrm{Ad})$; for further details on this constant, see \cite[Sec. 3.4]{sahasupwhittaker}.

For any $g = \prod_{p|N} g_p \in \J_N $, define

$$N_0^g = \prod_{p|N}p^{n_0(g_p)}, \quad Q^g = \prod_{p|N}p^{q(g_p)}$$
where the integers $n_0(g_p)$, $q(g_p)$ are as defined just before Proposition \ref{localwsupportfinal}. Note that the ``useful bounds" stated there imply that $N_0^g | N_0$ and $Q^g |N_0M_1$.

\begin{lemma}Suppose that  $g \in \J_N $ and $W_\phi(a(q) g n(x)a(y)) \neq 0$ for some $q \in \Q$. Then we have $q = \frac{n}{Q^g}$ for some $n \in \Z$.
\end{lemma}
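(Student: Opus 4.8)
The statement asserts that the Whittaker expansion of $\phi$ at a point $gn(x)a(y)$ with $g \in \J_N$ is supported on fractions $q$ whose denominator divides $Q^g$. The strategy is to reduce this global claim about the support of $q \mapsto W_\phi(a(q)gn(x)a(y))$ to the local statements already established, place by place. Since $W_\phi$ factors as $W_\phi(h) = c\prod_v W_v(h_v)$, and since $W_\phi(a(q)gn(x)a(y))$ is a product over all places, it is enough to identify, for each finite prime $p$, the set of $q \in \Q_{\ne 0}$ for which $W_{\pi_p}(a(q)g_p)$ can be nonzero (note that the archimedean factor is nonzero for all $q$, and that the element $n(x)a(y)$ lives at the archimedean place and does not interact with the finite components $g_p$). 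Write $q = \prod_p p^{v_p(q)}$; we need to show $v_p(q) \ge -q(g_p)$ for every $p$, since that is exactly the condition $q \in \tfrac{1}{Q^g}\Z$ once we recall $Q^g = \prod_{p|N} p^{q(g_p)}$ and that $q(g_p) = n_0(g_p) = 0$, hence $p^{q(g_p)}=1$, for $p \nmid N$.

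First I would dispose of the primes $p \nmid N$: there $\pi_p$ is unramified, $g_p \in G(\Z_p)$, and Lemma~\ref{whitsuppdiagtriv} (applied after absorbing $g_p$, which is $K_1(\p^{n_p})=G(\Z_p)$-invariant, into the Whittaker argument via the Iwasawa/Cartan decomposition) forces $v_p(q) \ge 0$; this matches $q(g_p)=0$. For the primes $p \mid N$, the key input is Proposition~\ref{localwsupportfinal}: by construction of $\J_N$, each component $g_p$ lies in $\J_{S,p} \subset K_p a(\varpi^{n_{1,p}})$, and moreover $l(g_p) \le n_{0,p}$ whenever $n_p$ is odd, so the hypotheses of Proposition~\ref{localwsupportfinal} are exactly met. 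Part (1) of that proposition says that if $W_{\pi_p}(a(\varpi^b v)g_p) \ne 0$ for some unit $v$ then $b \ge -q(g_p)$. Writing $q = \varpi^{v_p(q)} u$ with $u \in \Z_p^\times$, we get $v_p(q) \ge -q(g_p)$, which is precisely what is needed.

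Assembling these local constraints: $W_\phi(a(q)gn(x)a(y)) = c\, W_\infty(a(q)n(x)a(y)) \prod_{p} W_{\pi_p}(a(q)g_p)$, and this vanishes unless $v_p(q) \ge -q(g_p)$ for all finite $p$ (and $v_p(q)\ge 0$ for $p\nmid N$, already subsumed). Equivalently, $q Q^g \in \Z$, i.e. $q = n/Q^g$ for some $n \in \Z$ (and $n \ne 0$ since $q \ne 0$). This completes the argument.

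The proof is essentially bookkeeping once Proposition~\ref{localwsupportfinal} is in hand; the only point requiring a little care is checking that the element $g \in \J_N$ genuinely satisfies the hypotheses of Proposition~\ref{localwsupportfinal} at every prime, i.e. that $\J_N = \prod_{p|N_2} wK_p^0(p)a(p^{n_{1,p}}) \prod_{p|N,\, p\nmid N_2} G(\Z_p)a(p^{n_{1,p}})$ does land inside $\{g_p \in K_p a(\varpi^{n_{1,p}}) : l(g_p) \le n_{0,p} \text{ if } n_p \text{ odd}\}$ — and this is exactly the content of the remark following Lemma~\ref{usefull} together with the definition of $\J_S$. I do not anticipate a genuine obstacle here; the substantive work has already been done in Section~\ref{sec:localcalcs}.
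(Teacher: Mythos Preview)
Your proof is correct and follows essentially the same approach as the paper: factor $W_\phi$ place by place, apply Lemma~\ref{whitsuppdiagtriv} at primes $p\nmid N$ (where $g_p=1$, so no Iwasawa/Cartan manipulation is even needed), and apply Proposition~\ref{localwsupportfinal}(1) at primes $p\mid N$ after checking that membership in $\J_N$ guarantees its hypotheses. The paper's own proof is a two-line version of exactly this argument.
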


\begin{proof} We have $W_{\pi_p}(a(q)g_p) \neq 0$ for each $p|N$ and $W_{\pi_p}(a(q)) \neq 0$ for each $p \nmid N$.  Now the result follows from Proposition~\ref{localwsupportfinal} and Lemma \ref{whitsuppdiagtriv}.
\end{proof}

Henceforth we fix some $g \in \J_N $. By comparing the expansion~\eqref{whittakerexp} for $g_\f = g$ with the trivial case $g_\f =1$, we conclude that
\begin{align}
\nonumber &\phi(g n(x)a(y) )= \sum_{n \in \Z_{\ne 0}}
W_\phi(a(n/Q^g) g n(x)a(y))\\ \nonumber&=\sum_{n \in \Z_{\ne 0}}
\left(\prod_{p|N} W_{\pi_p}(a(n/Q^g) g)\right) \left( c \prod_{p\nmid N}W_{\pi_p}(a(n) )\right) W_\infty(a(n/Q^g)n(x)a(y))\\
&\label{fourierexp2}=\left(\frac{y}{Q^g}\right)
^{1/2}\sum_{n \in \Z_{\ne 0}}  (|n|, N ^\infty)^{1/2}\rho_{\phi}\left(\frac{n}{(|n|, N ^\infty)}\right)\lambda_{\pi_{N}}(n; g)K_{it}\left(\frac{2 \pi |n|y }{Q^g}\right) \chi_n
\end{align}

where $\chi_n$  is some complex number of absolute value 1, and for each non-negative integer $n$ we define

$$\lambda_{\pi_{N}}(n; g) := \prod_{p|N}W_{\pi_p}\left(a(n p^{-q(g_p)})g_p\right).$$

The tail of the sum \eqref{fourierexp2} consisting of the terms with $2 \pi |n|y/Q^g > T + T^{1/3 + \eps}$ is negligible because of the exponential decay of the Bessel function. Put $$R = Q^g(\frac{T + T^{1/3 + \eps}}{2\pi  y}) \asymp \frac{Q^g T}{y}.$$ Using the Cauchy-Schwarz inequality and Lemma~\ref{rholambdalemma}, we therefore have
\begin{equation}\label{keyeqfourier}\begin{split}
|\phi(g n(x)& a(y) )|^2 \ll_\eps (NT)^\eps e^{\pi t} \left(\frac{y}{Q^g}\right)   \times \\ &\left(\sum_{0 < n  < R}(|n|, N ^\infty)\left|\lambda_{\pi}\left(\frac{n}{(|n|, N ^\infty)}\right)\right|^2 \right)\left(\sum_{0 < n  < R}\left|\lambda_{\pi_{N}}(n; g)K_{it}\left(\frac{2 \pi |n|y }{Q^g}\right)\right|^2 \right)\end{split}
\end{equation}

\begin{lemma}\label{lambdapinlemma}The function $\lambda_{\pi_{N}}(n; g)$ satisfies the following properties.
\begin{enumerate}
\item Suppose that $n_1$ is a positive integer such that $n_1 |N ^\infty$, and $n_0, n_0'$ are two integers coprime to $N$ such that $n_0 \equiv n_0' \pmod{N_0^g }$. Then $$|\lambda_{\pi_{N
    }}(n_0n_1; g)| = |\lambda_{\pi_{N}}(n_0'n_1; g)| .$$

\item For any integer $r$, and any $n_1 |N ^\infty$, $$\sum_{\substack{r N_0^g  \le |n_0| < (r+1)N_0^g  \\ (n_0, N)=1}} |\lambda_{\pi_{N}}(n_0n_1; g)|^2  \ll N_0^g  n_1^{-1/2}.$$
\end{enumerate}

\end{lemma}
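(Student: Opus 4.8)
The plan is to reduce both statements to the local facts proved in Proposition~\ref{whitsuppsizeram} (or its repackaging Proposition~\ref{localwsupportfinal}), place by place. Since $\lambda_{\pi_N}(n;g) = \prod_{p|N} W_{\pi_p}(a(np^{-q(g_p)})g_p)$ factors over the primes dividing $N$, and each factor only depends on $n$ through its $p$-adic valuation and its class modulo $U_{n_0(g_p)}$ in $\OF_p^\times$, the whole analysis is by the Chinese Remainder Theorem a product of local analyses. First I would make this factorization explicit: writing $n = \prod_{p|N} p^{v_p(n)} \cdot n'$ with $(n', N) = 1$, each local factor $W_{\pi_p}(a(np^{-q(g_p)})g_p)$ equals $W_{\pi_p}(a(p^{v_p(n) - q(g_p)} u_p)g_p)$ where $u_p$ is the unit part of $np^{-v_p(n)}$, so it depends only on $v_p(n)$ and on $u_p \bmod U_{n_0(g_p)}$.

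For part (1): if $n_0 \equiv n_0' \pmod{N_0^g}$ with both coprime to $N$, then for every $p|N$ we have $n_0 \equiv n_0' \pmod{p^{n_0(g_p)}}$, hence $n_0/n_0' \in U_{n_0(g_p)}$ in $\Z_p^\times$; combined with the fact that $n_1 | N^\infty$ contributes the same valuation at each $p$ regardless of whether we multiply by $n_0$ or $n_0'$, the remark after Proposition~\ref{localwsupportfinal} (that $v \mapsto |W_{\pi_p}(a(vy)g_p)|$ is $U_{n_0(g_p)}$-invariant) gives $|W_{\pi_p}(a(n_0 n_1 p^{-q(g_p)})g_p)| = |W_{\pi_p}(a(n_0' n_1 p^{-q(g_p)})g_p)|$ for every $p|N$. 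Taking the product over $p|N$ yields the claimed equality.

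For part (2): fix $n_1 | N^\infty$ and write $n_1 = \prod_{p|N} p^{e_p}$. The sum over $n_0$ in a window of length $N_0^g$ coprime to $N$ is, by part (1) and CRT, a sum over residue classes $v \in (\Z/N_0^g)^\times$, and the summand is the product over $p|N$ of $|W_{\pi_p}(a(p^{e_p - q(g_p)} v_p) g_p)|^2$ where $v_p$ is the image of $v$ in $(\Z/p^{n_0(g_p)})^\times \subset \OF_p^\times/U_{n_0(g_p)}$ (the window has exactly $N_0^g / \prod_p p^{n_0(g_p)} \cdot \prod_p p^{n_0(g_p)}$... more precisely each class mod $\prod_p p^{n_0(g_p)} = N_0^g$ occurs once, and the residue mod $p^{n_0(g_p)}$ of $v$ ranges uniformly). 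So the whole sum factors as
\[
\sum_{\substack{rN_0^g \le |n_0| < (r+1)N_0^g\\ (n_0,N)=1}} |\lambda_{\pi_N}(n_0 n_1;g)|^2 = \prod_{p|N} \left( \sum_{v_p \in (\OF_p^\times/U_{n_0(g_p)})} |W_{\pi_p}(a(p^{e_p - q(g_p)} v_p)g_p)|^2 \right) \cdot \frac{1}{\text{(overcounting factor)}},
\]
and one bounds each local sum. Here the key input is Proposition~\ref{localwsupportfinal}(2): for $g_p \in K_p a(p^{n_{1,p}})$ satisfying the parity/level hypothesis, with $b = e_p - q(g_p) = -q(g_p) + e_p$, if $e_p \ge 0$ is interpreted as $r_p := e_p$ in that proposition (the exponent is at least $-q(g_p)$ precisely when $e_p \ge 0$, which holds since $n_1 | N^\infty$ forces $e_p \ge 0$), we get $\frac{1}{|\OF_p^\times/U_{n_0(g_p)}|} \sum_{v_p} |W_{\pi_p}(a(p^{e_p - q(g_p)} v_p)g_p)|^2 \ll q_p^{-e_p/2} = p^{-e_p/2}$. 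Multiplying over $p|N$ gives $\prod_p p^{-e_p/2} = n_1^{-1/2}$, and the measure normalization contributes the factor $|(\Z/N_0^g)^\times| \le N_0^g$ (the window of length $N_0^g$ contains $\asymp N_0^g$ such residues, up to the $\varphi$-factor which is absorbed). Hence the sum is $\ll N_0^g \, n_1^{-1/2}$.

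The main obstacle, and the only place requiring care, is the bookkeeping of the normalizations in the window-vs.-residue-class count: one must check that summing $|n_0|$ over an interval of length $N_0^g$ coprime to $N$ genuinely produces each relevant residue class mod $N_0^g$ with the right multiplicity, and that the measure $d^\times v$ normalization (volume $1$ on $\OF_p^\times$) in Proposition~\ref{localwsupportfinal} matches the counting measure on residue classes used here — i.e., that $\frac{1}{|\OF_p^\times/U_{n_0(g_p)}|}\sum_{v_p}$ in the remark is exactly $\int_{\OF_p^\times}\cdots d^\times v$. Once the normalizations are lined up, parts (1) and (2) are immediate consequences of the factorization and Proposition~\ref{localwsupportfinal}; no new analytic estimate is needed beyond what Section~\ref{sec:localcalcs} already supplies.
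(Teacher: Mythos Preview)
Your proposal is correct and follows essentially the same route as the paper: part (1) via the $U_{n_0(g_p)}$-invariance of $|W_{\pi_p}(a(\,\cdot\,)g_p)|$ at each $p\mid N$, and part (2) via the Chinese Remainder Theorem to factor the residue-class sum into a product of local averages, each bounded by Proposition~\ref{localwsupportfinal}(2). The only difference is presentational: the paper writes the CRT step as an equality between $\frac{1}{N_0^g}\sum_{n_0 \bmod N_0^g,\,(n_0,N)=1}|\lambda_{\pi_N}(n_0n_1;g)|^2$ and $\prod_{p\mid N}\int_{\Z_p^\times}|W_{\pi_p}(a(n_1 v p^{-q(g_p)})g_p)|^2\,d^\times v$ and then invokes the proposition directly, whereas you spell out the window-versus-residue bookkeeping more explicitly (and rightly flag the harmless $\varphi(N_0^g)$ versus $N_0^g$ discrepancy that the paper absorbs into the implied constant).
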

\begin{proof}Let $p|N$ and $u_1, u_2 \in \Z_p^\times$. Then by~\eqref{e:coset}, it follows that for all $w \in \Q_p^\times$, $$|W_{\pi_p}\left(a(wu_1)g_p\right)| = |W_{\pi_p}\left(a(wu_2)g_p\right)|$$ whenever $u_1 \equiv u_2 \mod(p^{n_0(g_p)}).$ It follows that if $n_1| N ^\infty$, then \begin{equation}\label{whitinv}|\lambda_{\pi_{N}}(n_0n_1; g)| = |\lambda_{\pi_{N}}(n_0'n_1; g)| \qquad \text{ if } \ n_0 \equiv n_0' \pmod{N_0^g }.\end{equation}

Furthermore using the above and the Chinese remainder theorem, $$\frac{1}{ N_0^g }\sum_{\substack{n_0 \bmod N_0^g   \\ (n_0, N)=1}} |\lambda_{\pi_{N}}(n_0n_1; g)|^2 = \prod_{p|N} \left(\int_{\Z_p^\times} \left|W_{\pi_p}\left(a(n_1 vp^{-q(g_p)})g_p\right)\right|^2 d^\times v\right)$$ and hence by Proposition~\ref{localwsupportfinal}, $\frac{1}{ N_0^g }\sum_{\substack{n_0 \bmod N_0^g   \\ (n_0, N)=1}} |\lambda_{\pi_{N}}(n_0n_1; g)|^2   \ll n_1^{-1/2}.$
\end{proof}
\begin{lemma}\label{l:firstbound}We have $$\sum_{0 < n  < R}t e^{\pi t}\left|\lambda_{\pi_{N}}(n; g)K_{it}\left(\frac{2 \pi |n|y }{Q^g}\right)\right|^2 \ll_{\epsilon}(NT)^\eps \left(N_0^g  T^{1/3} + \frac{Q^gT}{y}  \right).$$
\end{lemma}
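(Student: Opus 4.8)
The plan is to split the sum dyadically according to the size of $n$, and within each dyadic block split $n$ as $n = n_0 n_1$ where $n_1 = (n, N^\infty)$ is the $N$-part and $n_0$ is coprime to $N$. The factor $|\lambda_{\pi_N}(n;g)|^2$ is then controlled by Lemma \ref{lambdapinlemma}(2), which gives $\sum_{rN_0^g \le |n_0| < (r+1)N_0^g,\ (n_0,N)=1} |\lambda_{\pi_N}(n_0 n_1;g)|^2 \ll N_0^g n_1^{-1/2}$, while the Bessel factor $t e^{\pi t} |K_{it}(u)|^2$ is bounded using standard estimates: it is $\ll_\eps u^{-\eps} \cdot (\text{something})$, more precisely one uses that $t e^{\pi t}|K_{it}(u)|^2$ is bounded by $O(1)$ on average over intervals and decays exponentially once $u \gg T^{1+\eps}$, with the well-known local bound $te^{\pi t}|K_{it}(u)|^2 \ll \min(1, (T/u)^{?})$ and in particular $\ll T^{1/3}$ uniformly together with rapid decay for $u > T + T^{1/3+\eps}$.

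First I would record the pointwise/averaged bound for the archimedean Bessel factor: for $u = 2\pi |n| y / Q^g$, one has $t e^{\pi t} |K_{it}(u)|^2 \ll_\eps (Tu)^\eps \big(T^{1/3} \cdot \mathbf{1}_{u \le 2T} + \mathbf{1}_{u > 2T}\cdot(\text{negligible})\big)$ away from the transition range, and near $u \asymp T$ one still has the uniform bound $\ll T^{1/3+\eps}$; this is the same input used in Iwaniec--Sarnak and in Templier's work. Then, abbreviating $K(n) := t e^{\pi t} |K_{it}(2\pi n y/Q^g)|^2$, I would write the sum over $0 < n < R$ as $\sum_{n_1 | N^\infty,\ n_1 < R} \sum_{0 < n_0 < R/n_1,\ (n_0,N)=1} |\lambda_{\pi_N}(n_0 n_1; g)|^2 K(n_0 n_1)$. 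Breaking the inner $n_0$-sum into blocks of length $N_0^g$ and using that $K(n)$ is essentially constant (up to $(NT)^\eps$) on each such block when $N_0^g \le$ the relevant scale — or more safely, bounding $K(n_0 n_1)$ on the block by its maximum and using that the number of nonzero blocks is $\ll R/(n_1 N_0^g) + 1$ — Lemma \ref{lambdapinlemma}(2) gives that each block contributes $\ll N_0^g n_1^{-1/2} \cdot \max_{\text{block}} K$.

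Summing the block contributions: the blocks with $u \le T$ contribute $\ll (NT)^\eps T^{1/3} \sum_{n_1|N^\infty, n_1<R} n_1^{-1/2} \big( \text{number of blocks} \big) N_0^g$, where the number of blocks for a fixed $n_1$ is $\ll R/(n_1 N_0^g) + 1$; this yields $\ll (NT)^\eps T^{1/3}\big( R \sum n_1^{-3/2} + N_0^g \sum n_1^{-1/2}\big)$. Since $R \asymp Q^g T/y$ and $\sum_{n_1 | N^\infty} n_1^{-1/2} \ll (N)^\eps$ (a convergent Euler product over $p | N$, up to $N^\eps$), this collapses to $\ll (NT)^\eps\big( T^{1/3} Q^g T/y + N_0^g T^{1/3}\big)$; wait — one must be careful that the term $T^{1/3} R$ is actually $T^{1/3} Q^g T / y$, which exceeds the claimed $Q^g T/y$ by a factor $T^{1/3}$. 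This is the point where the extra decay of the Bessel function matters: for $n$ with $u = 2\pi n y/Q^g$ ranging over $[1, R]$, the "generic" range $u \asymp T$ has only $\asymp Q^g/y$ values of $n$ (a block of length $\ll Q^g/y$ in the $n$-variable sitting at $u \asymp T$), not the full range up to $R$; one uses the finer bound $K(n) \ll (Tu)^\eps (T/u)$ for $u \ge T$ — wait, actually the standard bound is more subtle. So the hard part will be extracting from the Bessel estimates that $\sum_{0<n<R} K(n) \ll (NT)^\eps (T^{1/3} \cdot (Q^g/y) \cdot \mathbf{1}_{Q^g T/y \ge \text{something}} + \ldots)$ correctly, i.e. that the portion of the $n$-sum where $K(n) \asymp T^{1/3}$ is a window of length $O(Q^g/y)$ in $n$ (so contributes $T^{1/3} Q^g/y$-ish — but we want $T^{1/3} N_0^g$, so actually the window length in $n$ is $O(Q^g/y)$ but we need $N_0^g$ nonzero $\lambda$-blocks, hence the window must be compared to $N_0^g$; since $Q^g \le N_0 M_1$ and the window is about $Q^g/y$... ). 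Concretely: partition $[1,R)$ into the "bulk" $n < R_0 := Q^g T^{1/3}/y$-ish where we do not yet have enough Bessel decay (contributing, via $K \ll T^{1/3+\eps}$ and Lemma \ref{lambdapinlemma}(2), at most $\ll (NT)^\eps T^{1/3}\big( R_0/N_0^g + \sum_{n_1} 1\big) N_0^g \cdot (\ldots) \ll (NT)^\eps (T^{1/3} Q^g/y + N_0^g T^{1/3})$), wait this still has a $Q^g/y$ not $Q^g T/y$ — good, this matches the second claimed term $Q^g T/y$ only if — hmm, $Q^g/y \le Q^g T/y$, so it is dominated. And the "tail" $n \ge R_0$ where $K(n) \ll (T/u)^{1/2}(NT)^\eps \cdot u^{-\eps}$-type decay makes $\sum K(n) \ll (NT)^\eps Q^g T/y$. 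The two ranges together give $\ll (NT)^\eps (N_0^g T^{1/3} + Q^g T/y)$, and since the lemma statement has $Q^g$ — but wait, the lemma as stated has $\frac{Q^gT}{y}$ in the second term and $N_0^g T^{1/3}$ in the first, which is exactly what we get. The main obstacle is thus the careful bookkeeping of the Bessel function's transition range together with the dyadic/arithmetic decomposition, ensuring the $n_1$-sums converge (Euler product over $p|N$) and the block count is handled correctly; all of this is routine but delicate, and closely follows Templier \cite{templier-sup-2} with the only new ingredient being the average bound from Proposition \ref{localwsupportfinal} repackaged as Lemma \ref{lambdapinlemma}(2).
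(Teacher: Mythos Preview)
Your overall strategy matches the paper's: write $n = n_0 n_1$ with $n_1 = (n,N^\infty)$, break the $n_0$-range into blocks of length $N_0^g$, apply Lemma~\ref{lambdapinlemma}(2) on each block, then control the Bessel weight. The gap is in your treatment of the Bessel factor.

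The paper does not split into a ``bulk'' and ``tail''. It uses the single pointwise bound
\[
t e^{\pi t}|K_{it}(u)|^2 \ll f(u) := \min\bigl(T^{1/3},\, |u/T - 1|^{-1/2}\bigr)
\]
from \cite[(3.1)]{templier-sup-2}. After blocking, one needs $\sum_r \max_{\text{block}} f$. Since $f$ has $O(1)$ turning points on $[0,T+T^{1/3+\eps}]$, this sum is bounded by the peak value $T^{1/3}$ plus the Riemann-sum estimate
\[
\frac{1}{(\text{block width})}\int_0^{T+T^{1/3+\eps}} f(s)\,ds \ll \frac{Q^g}{N_0^g n_1 y}\cdot T,
\]
since $\int |s/T-1|^{-1/2}\,ds \ll T$. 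Multiplying by $N_0^g n_1^{-1/2}$ and summing over $n_1 \mid N^\infty$ (at cost $N^\eps$) gives the claim immediately.

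Your bulk/tail cut at $R_0 = Q^g T^{1/3}/y$ corresponds to $u \asymp T^{1/3}$, which is not the transition point of $K_{it}$ (that is at $u \asymp T$). More seriously, your tail bound ``$K(n) \ll (T/u)^{1/2}$'' is false near $u = T$, where the truth is $K \asymp T^{1/3}$ rather than $O(1)$; you correctly sensed this (``the standard bound is more subtle'') but did not resolve it. The fix is exactly the $f(u)$ bound and the turning-points trick above: replace your two-range dichotomy by ``peak term plus integral'' and the argument goes through cleanly.
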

\begin{remark}If we assume Conjecture~\ref{locallindelof}, then the bound on the right side can be improved to $(NT)^\eps \left( T^{1/3} + \frac{Q^gT}{y}  \right).$

\end{remark}

\begin{proof}Let $f(y) = \min(T^{1/3}, \left| \frac{y}{T} - 1\right|^{-1/2}).$ Then it is known  that $t e^{\pi t} |K_{it}(y)|^2 \ll f(y)$; see, e.g., \cite[(3.1)]{templier-sup-2}. Using the previous lemma, we may write \begin{align*}&t e^{\pi t}\sum_{0 < n  < R}\left|\lambda_{\pi_{N}}(n; g)K_{it}\left(\frac{2 \pi |n|y }{Q^g}\right)\right|^2  \ll  \sum_{\substack{1 \le n_1 \le R \\ n_1 |N ^\infty}} \sum_{\substack{1 \le |n_0| \le \frac{R}{n_1} \\ (n_0, N)=1}}  |\lambda_{\pi_{N}}(n_0n_1; g)|^2f\left(\frac{2 \pi |n_0n_1|y }{Q^g}\right)\\ &\ll  \sum_{\substack{1 \le n_1 \le R \\ n_1 |N ^\infty}}  \sum_{0 \le r \le \lfloor \frac{R}{n_1 N_0^g}\rfloor} \sum_{\substack{r N_0^g \le |n_0| \le (r+1) N_0^g  \\ (n_0,
N)=1}}|\lambda_{\pi_{N}}(n_0n_1; g)|^2f\left(\frac{2 \pi |n_0n_1|y }{Q^g}\right)\\ &\ll N_0^g \sum_{\substack{1 \le n_1 \le R \\ n_1 |N ^\infty}} n_1^{-1/2} \sum_{0 \le r \le \lfloor\frac{R}{n_1 N_0^g}\rfloor} f\left(\frac{2 \pi |n_0^{(r)}n_1|y }{Q^g}\right)  \quad \text{ \big[where } n_0^{(r)} \in [r N_0^g, (r+1) N_0^g] \\ &\text{is the point where } f(2 \pi n_0^{(r)} n_1 y/ Q^g) \text{ is maximum, and where we have used Lemma \ref{lambdapinlemma}\big]}\\
&\ll \sum_{\substack{1 \le n_1 \le R \\ n_1 |N ^\infty}} n_1^{-1/2}N_0^g \left( T^{1/3} + \int_0^{\frac{R}{N_0^g n_1}} f\left(\frac{2 \pi N_0^g r n_1y }{Q^g}\right) dr \right) \quad \text{[as $f$ has $\ll 1$ turning points]} \\&\ll \sum_{\substack{1 \le n_1 \le R \\ n_1 |N ^\infty}} \left( N_0^g  T^{1/3}n_1^{-1/2} + n_1^{-3/2} \frac{Q^g}{y}\int_0^{T+T^{1/3 + \eps}} \left| \frac{s}{T} - 1\right|^{-1/2} ds \right) \\&\ll_\eps (NT)^\eps \left(N_0^g  T^{1/3} + \frac{Q^gT}{y}  \right).
\end{align*}
\end{proof}

\begin{lemma}\label{l:secondbd}For all $X >0$, we have
$$\sum_{0 < n  < X}(|n|, N ^\infty)\left|\lambda_{\pi}\left(\frac{n}{(|n|, N ^\infty)}\right)\right|^2\ll_{\epsilon}  (NTX)^\epsilon X.$$

\end{lemma}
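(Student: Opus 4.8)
\textbf{Proof proposal for Lemma~\ref{l:secondbd}.}

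The plan is to reduce the claimed estimate to the standard Rankin--Selberg bound of Lemma~\ref{rholambdalemma}(3) by decomposing each integer $n$ according to the largest divisor $n_1$ of $n$ with $n_1 \mid N^\infty$. Write $n = n_0 n_1$ with $n_1 \mid N^\infty$ and $(n_0, N) = 1$; this decomposition is unique and $(|n|, N^\infty) = n_1$. So the left-hand side becomes
\[
\sum_{\substack{n_1 \ge 1 \\ n_1 \mid N^\infty}} n_1 \sum_{\substack{1 \le |n_0| < X/n_1 \\ (n_0, N) = 1}} |\lambda_\pi(n_0)|^2.
\]
For each fixed $n_1$, Lemma~\ref{rholambdalemma}(3) gives $\sum_{1 \le |n_0| < X/n_1} |\lambda_\pi(n_0)|^2 \ll (NTX)^\eps (X/n_1)$, so the inner sum over $n_0$ contributes $\ll (NTX)^\eps X/n_1$. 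Hence the whole expression is bounded by $(NTX)^\eps X \sum_{n_1 \mid N^\infty,\ n_1 \le X} 1$.

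It remains to bound $\#\{n_1 \le X : n_1 \mid N^\infty\}$. This is a standard divisor-type estimate: the number of $N$-smooth integers (in fact, integers all of whose prime factors divide $N$) up to $X$ is $\ll_\eps (NX)^\eps$. Indeed, writing $n_1 = \prod_{p \mid N} p^{e_p}$, each exponent $e_p$ satisfies $p^{e_p} \le X$, so $e_p \le \log X / \log p \le \log X$, giving at most $(1 + \log X)^{\omega(N)}$ choices; since $\omega(N) \ll \log N / \log\log N$ and $(1+\log X)^{\log N/\log\log N} \ll_\eps (NX)^\eps$, the claim follows. Absorbing this factor into the $(NTX)^\eps$ completes the proof.

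The argument is entirely routine; there is no real obstacle. The only mild subtlety is making sure the bound on the number of $N$-smooth integers up to $X$ is genuinely of size $(NX)^\eps$ (not, say, $X^\eps$ with an $N$-dependent constant), but this is handled by the explicit exponent count above. Everything else is Cauchy--Schwarz-free bookkeeping plus a single invocation of the Rankin--Selberg bound.
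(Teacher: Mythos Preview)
Your decomposition $n=n_0 n_1$ and application of Lemma~\ref{rholambdalemma}(3) are exactly right, and this is the argument the paper has in mind. However, your justification of the final counting step contains a genuine error. You claim that $(1+\log X)^{\log N/\log\log N}\ll_\eps (NX)^\eps$, but this is false: take $X=N$ equal to the product of the first $k$ primes. Then $\omega(N)=k\sim \log N/\log\log N$, and
\[
\log\bigl((1+\log N)^{\omega(N)}\bigr)\sim \frac{\log N}{\log\log N}\cdot\log\log N=\log N,
\]
so your upper bound is of size $N^{1+o(1)}$, whereas $(NX)^\eps=N^{2\eps}$. Thus the crude product bound $\prod_{p\mid N}(1+\lfloor\log_p X\rfloor)$ is simply too weak to yield $(NX)^\eps$. (There is also the minor slip that $\log X/\log p\le \log X$ fails for $p=2$, but that is harmless.)

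The claim $\#\{n_1\le X:n_1\mid N^\infty\}\ll_\eps (NX)^\eps$ is nevertheless \emph{true}; one just needs a better argument. The cleanest fix is to avoid the counting altogether: retain the factor $n_1^{-\eps}$ coming from the Rankin--Selberg bound, i.e.\ use $\sum_{|n_0|<X/n_1}|\lambda_\pi(n_0)|^2\ll (NT)^\eps (X/n_1)^{1+\eps}$, so that after multiplying by $n_1$ and summing you are left with
\[
(NT)^\eps X^{1+\eps}\sum_{n_1\mid N^\infty} n_1^{-\eps}=(NT)^\eps X^{1+\eps}\prod_{p\mid N}(1-p^{-\eps})^{-1}\ll_\eps (NTX)^\eps X,
\]
the last inequality because $-\log(1-p^{-\eps})\le \eps\log p$ once $p$ exceeds a threshold depending only on $\eps$, and the finitely many smaller primes contribute $O_\eps(1)$. (Equivalently, Rankin's trick with this same Euler product gives the counting bound directly.) This is also how the analogous sum $\sum_{n_1\mid N^\infty}n_1^{-1/2}$ is handled in the proof of Lemma~\ref{l:firstbound}.
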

\begin{proof}This follows from the last part of Lemma~\ref{rholambdalemma} using a similar (but simpler) argument as in the above Lemma.
\end{proof}

Finally, by combining~\eqref{keyeqfourier}, Lemma~\ref{l:firstbound}, and Lemma~\ref{l:secondbd}, we get the bound
\begin{equation}\label{strongerfourierbd}|\phi(g n(x) a(y) )|^2 \ll_\eps (NT)^\eps \left(\frac{Q^gT}{y} + N_0^g T^{1/3} \right).\end{equation} Taking square roots, and using that $ Q^g \le N_0M_1$, $N_0^g \le N_0$, we get the conclusion of Proposition~\ref{fourierboundprop}.

\subsection{Preliminaries on amplification}\label{s:amplglobal}

Our aim for the rest of this paper is to prove the following proposition. As explained in Section \ref{s:proofsketch}, this will complete the proof of our main result.

\begin{proposition}\label{prop:ampl} Let $\Lambda \ge 1$ be a real number. Let $n(x)a(y) \in \F_{N_2}$, $g  \in \J_N $. Then \begin{equation}\label{finalamplbdnew}
|\phi(g n(x)a(y))|^2 \ll_\eps (\Lambda NT)^\eps \ N_1 M_1 \left[\frac{T + N_2^{1/2}T^{1/2}y}{\Lambda}   + \Lambda^{1/2}T^{1/2}(N_2^{-1/2} + y)  + \Lambda^2 T^{1/2}N_2^{-1}  \right].   \end{equation}

\end{proposition}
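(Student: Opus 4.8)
The strategy is the classical amplification method, executed adelically so that the ramified places are handled place-by-place via the local test function $\Phi'_\pi$ of Section~\ref{s:mcbeg}. First I would set up the amplified pre-trace formula: pick a test function $f = f_\infty \cdot f_{\text{ram}} \cdot f_{\text{ampl}}$ on $G(\A)$, where $f_\infty$ is the standard spherical archimedean bump concentrated near the point $n(x)a(y)$ (the usual Iwaniec--Sarnak choice producing the $K$-Bessel weight), $f_{\text{ram}} = \bigotimes_{p|N}\Phi'_{\pi_p}$ is the tensor of the local matrix-coefficient test functions at the ramified primes (renormalized so each acts as a projector with eigenvalue $\delta_{\pi_p}$, so that $\prod_{p|N}\delta_{\pi_p}^{-1}\Phi'_{\pi_p}$ is idempotent), and $f_{\text{ampl}} = \bigotimes_{p\nmid N}\hat{f}_p$ is the unramified amplifier built from the Hecke operators $T(\ell)$, $T(\ell^2)$ for primes $\ell$ in a dyadic window $[\Lambda, 2\Lambda]$ coprime to $N$, using the identity $\lambda_\pi(\ell)^2 - \lambda_\pi(\ell^2) = \chi(\ell)$ to guarantee a large amplified eigenvalue $L \gg \Lambda^{1-\eps}$. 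By Proposition~\ref{keymatrixprop}, $a(\varpi^{n_1})\cdot W_{\pi_p}$ is an eigenvector of $R(\Phi'_{\pi_p})$ with eigenvalue $\delta_{\pi_p}$, which is exactly what makes the translated newform $\phi$ (evaluated at $g\in\J_N$) a test vector for this operator; Corollary~\ref{nonneg} ensures positivity, so the spectral side of the pre-trace formula, after expanding in an orthonormal basis, is bounded below by $L^2 |\phi(gn(x)a(y))|^2$ up to the normalization $\prod_p \delta_{\pi_p}^{-1}$, which contributes the factor $N_1 M_1$ (since $\delta_{\pi_p}\asymp q^{-n_{1,p}-m_{1,p}}$, and $\prod_p q^{n_{1,p}} = N_1$, $\prod_p q^{m_{1,p}} = M_1$).

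Next I would bound the geometric side. Positivity of the automorphic kernel lets me replace the sum over $\gamma \in G(\Q)$ by an upper bound; after unfolding, the geometric side is a sum over $\gamma$ of the form $\hat{f}_\infty(\gamma) \cdot (\text{product of local volume/support factors at ramified primes}) \cdot (\text{amplifier coefficients})$. The ramified local factors at $p|N$ are controlled by the support of $\Phi'_{\pi_p}$, which (being supported on $ZK^0_p$, i.e., on $K_p$ or $K^0_p(p)$) imposes a congruence condition modulo the relevant power of $p$, together with the trivial pointwise bound $|\Phi'_{\pi_p}|\ll 1$. The archimedean factor $\hat{f}_\infty(\gamma)$ is nonzero only for $\gamma$ close to the stabilizer of $n(x)a(y)$ in the appropriate sense, which — since $n(x)a(y)\in\F_{N_2}$ lies in an essentially optimal Siegel-type domain for $\Gamma_0(N_2,N_2)$-type groups — restricts $\gamma$ to a count governed by $T^{1/2}$ in the size of the relevant matrix entries. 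At this point the counting reduces to exactly the lattice-point estimates of Harcos--Templier~\cite{harcos-templier-2} and Templier~\cite{templier-sup-2} applied to the \emph{squarefree} modulus $N_2$: one counts $\gamma = \begin{pmatrix}a&b\\c&d\end{pmatrix}$ with $N_2 | c$, determinant in the amplifier range $\ell_1\ell_2$ with $\ell_i \in [\Lambda,2\Lambda]$, and bounded distance to the point. I would split into the contribution of the identity/parabolic (``diagonal'') terms — giving the $\frac{T + N_2^{1/2}T^{1/2}y}{\Lambda}$ term after dividing by $L^2 \asymp \Lambda^2$ — and the genuinely hyperbolic/elliptic terms, whose count yields the $\Lambda^{1/2}T^{1/2}(N_2^{-1/2}+y)$ and $\Lambda^2 T^{1/2} N_2^{-1}$ contributions; the factors of $N_2^{-1/2}, N_2^{-1}$ come precisely from the congruence $N_2|c$ thinning out the lattice.

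Assembling: the pre-trace inequality reads
\begin{equation*}
L^2\,|\phi(gn(x)a(y))|^2 \ll_\eps (\Lambda NT)^\eps\, N_1 M_1 \cdot (\text{geometric count}),
\end{equation*}
and dividing through by $L^2 \gg \Lambda^{2-\eps}$ and inserting the three pieces of the count gives exactly~\eqref{finalamplbdnew}. The main obstacle I anticipate is \emph{not} the counting (which is black-boxed from Harcos--Templier, as the text emphasizes) but rather the careful bookkeeping at the ramified primes: one must verify that with the test function $\Phi'_{\pi_p}$ and the base point forced into $\J_N$ (so $g_p \in wK_p^0(p)a(p^{n_{1,p}})$ when $n_p$ is odd), the local orbital integrals really do collapse to the clean factor $\delta_{\pi_p}^{-1}$ times a simple congruence condition with no hidden loss — in particular that the support of $\Phi'_{\pi_p}$ interacts correctly with the conjugation by $a(p^{n_{1,p}})$, and that the ``off-diagonal'' local terms vanish or are negligible. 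This is where Proposition~\ref{keymatrixprop}(2) (the idempotency $\Phi'_\pi * \Phi'_\pi = \delta_\pi \Phi'_\pi$) does the essential work, and getting the normalization constants consistent across all $p|N$ with the global $\|\phi\|_2 = 1$ condition is the delicate step.
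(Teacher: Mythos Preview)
Your proposal is correct and follows essentially the same approach as the paper: ramified test function $\prod_{p|N}\Phi'_{\pi_p}$, standard unramified amplifier over primes $\ell\in[\Lambda,2\Lambda]$ via the identity $\lambda_\pi(\ell)^2-\lambda_\pi(\ell^2)=\omega_{\pi_\ell}(\ell)$, positivity from Corollary~\ref{nonneg}, and the geometric side reduced through the support of $\Phi'_{\pi_p}$ to Templier's counting result \cite[Prop.~6.1]{templier-sup-2} with the squarefree modulus $N_2$. The only cosmetic differences are that the paper first passes to $\phi'(g):=\phi(gh_N)$ so the base point becomes $k_N n(x)a(y)$ with $k_N\in K_N$, and on the geometric side no orbital integrals are computed --- one simply uses the pointwise bound $|\Phi'_{\pi_p}|\le 1$ together with the support condition, and the conjugation that matters is by $k_p\in wK^0_p(p)$ (not by $a(p^{n_{1,p}})$, which has already been absorbed into $\phi'$), which flips the support $K^0_p(p)$ to $K_{0,p}(p)$ and gives exactly the congruence $N_2\mid c$ you anticipated.
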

\bigskip

Recall that $h_{N} = \prod_{p|N} a(p^{n_{1,p}}).$ Define the vector $\phi' \in V_\pi$ by $$\phi'(g) = \phi(gh_{N}).$$ Then the problem becomes equivalent to bound the quantity $\phi'(k_{N} n(x)a(y))$ where $k_{N} \in K_{N}= \prod_{p|N}G(\Z_p)$, $k_Nh_N \in \J_N$. Note that $\phi'$ is $K_1'(N)K_\infty$-invariant where $K_1'(N):=h_{N}K_1(N)h_{N}^{-1}$.

Define the function $\Phi'_N$ on $\prod_{p|N}G(\Q_p)$ by $\Phi'_N = \prod_{p|N}\Phi'_{\pi_p}$, with the functions $\Phi'_{\pi_p}$ defined in Section~\ref{s:mcbeg}. By Proposition~\ref{keymatrixprop}, it follows that  $$R(\Phi'_N)\phi' :=\int_{(Z\bs G)(\prod_{p|N}\Q_p)} \Phi'_N(g)(\pi(g)\phi')  \ dg = \delta_N \phi'$$ where $\delta_N \gg N_1^{-1}M_1^{-1}.$ Note also that if $g \in \prod_{p|N}G(\Q_p)$ and $\Phi'_N(g)\neq 0$, then $g \in Z(\Q_p)G(\Z_p)$ for each prime $p$ dividing $N$ and $g \in Z(\Q_p)K^0_p(p)$ for each prime $p$ dividing $N_2$. Also, recall that $R(\Phi'_N)$ is a self-adjoint, essentially idempotent operator.

Next, we consider the primes not dividing $N$. Let $\mathcal{H}_{\ur}$ be the usual global (unramified) convolution Hecke algebra; it is generated by the set of all
functions $\kappa_\ur$ on $\prod_{p \nmid N}G(\Q_p)$ such that for each finite prime $p$ not dividing $N$, \begin{enumerate}

\item $\kappa_p \in C_c^\infty(G(\Q_p), \omega_{\pi_p}^{-1})$ ,

\item $\kappa_p$ is bi-$G(\Z_p)$ invariant.

\end{enumerate}

It is well-known that $\mathcal{H}_{\ur}$ is a commutative algebra and is generated by the various functions $\kappa_\ell$ (as $\ell$ varies over integers coprime to $N$)  where $\kappa_\ell = \prod_{p \nmid N} \kappa_{\ell,p}$ and the function $\kappa_{\ell,p}$ in $C_c^\infty(G(\Q_p), \omega_{\pi_p}^{-1})$ is defined as follows:
\begin{enumerate}
\item $\kappa_{\ell,p}(zka(\ell)k) = |\ell|^{-1/2} \omega_{\pi_p}^{-1}(z)$ for all $z \in Z(\Q_p)$, $k\in G(\Z_p)$.
\item $\kappa_{\ell,p}(g)=0$ if $g \notin Z(\Q_p)G(\Z_p)a(\ell)G(\Z_p)$.
\end{enumerate}

Then, it follows that for each $\kappa_{\ur} \in \mathcal{H}_{\ur}$, $$R(\kappa_{\ur})\phi' :=\int_{\prod_{p\nmid N}(Z\bs G)(\Q_p)} \kappa_{\ur}(g)(\pi(g)\phi')  \ dg = \delta_\ur \phi'$$ where $\delta_\ur$ is a complex number (depending linearly on $\kappa_{\ur}$).  Furthermore, $$R(\kappa_\ell) \phi' = \lambda_\pi(\ell) \phi'$$ where the Hecke eigenvalues $\lambda_\pi(\ell)$ were defined earlier in Lemma~\ref{rholambdalemma}. Moreover, we note that as $\kappa_{\ur}$  varies over $\mathcal{H}_{\ur}$, the corresponding operators $R(\kappa_{\ur})$ form a commuting system of normal operators. Indeed, if we define $\kappa_\ell^* = \left(\prod_{p|\ell}\omega_{\pi_p}^{-1}(\ell)\right) \kappa_\ell,$  and extend this via multiplicativity and anti-linearity to all of $\mathcal{H}_{\ur}$, then we have an involution $\kappa\mapsto \kappa^*$ on all of $\mathcal{H}_{\ur}$. It is well-known that $\kappa^*(g) = \overline{\kappa(g^{-1})}$ and hence $R(\kappa^*)$ is precisely the adjoint of $R(\kappa)$.

Finally, we consider the infinite place. For $g\in G(\R)^+$, let $u(g) = \frac{|g(i) - i|^2 }{4 \Im(g(i))}$ denote
the hyperbolic distance from $g(i)$ to $i$. Each bi-$Z(\R)K_\infty$-invariant function $\kappa_\infty $ in $C_c^\infty(Z(\R)\bs G(\R)^+)$, can be viewed as a function on $\R^+$ via $\kappa_\infty(g) = \kappa_\infty(u(g))$. For each irreducible spherical unitary principal series representation $\sigma$ of $G(\R)$, we  define the Harish-Chandra--Selberg transform $\hat{\kappa}_\infty(\sigma)$ via
$$\hat{\kappa}_\infty(\sigma) = \int_{Z(\R) \bs G(\R)^+} \kappa_\infty(g)\frac{ \langle \sigma(g)v_\sigma, v_\sigma \rangle}{\langle v_\sigma, v_\sigma \rangle}dg $$
where $v_\sigma$ is the unique (up to multiples) spherical vector in the
representation $\sigma$. It is known that for all such $\sigma$,  $R(\kappa_\infty) v_\sigma = \hat{\kappa}_\infty(\sigma)v_\sigma$; in particular,  $R(\kappa_\infty) \phi' = \hat{\kappa}_\infty(\pi_\infty)\phi'.$

By~\cite[Lemma 2.1]{templier-sup-2} there exists such a function $\kappa_\infty$ on $G(\R)$ with
the following properties:

\begin{enumerate}

\item $\kappa_\infty(g) = 0$ unless $g \in G(\R)^+$ and $u(g)\le 1$.

\item $\hat{\kappa}_\infty(\sigma) \ge 0$ for all irreducible spherical unitary principal series representations $\sigma$ of $G(\R)$.

\item    $\hat{\kappa}_\infty(\pi_\infty) \gg 1$.

\item  For all $g \in G(\R)^+$, $|\kappa_\infty(g)| \le T$ and moreover, if
$u(g) \ge T^{-2} $, then  $|\kappa_\infty(g)| \le \frac{T^{1/2}}{u(g)^{1/4}}$.

\end{enumerate}

Henceforth, we fix a function $\kappa_\infty$ as above.

\subsection{The amplified pre-trace formula}
In this subsection, we will use $L^2(X)$ as a shorthand for $L^2(G(\Q)\bs G(\A) / K_1'(N)K_\infty ,  \ \omega_\pi)$.

Let the functions $\Phi_N'$, $\kappa_\infty$ be as defined in the previous subsection. Consider the space of functions $\kappa$ on $G(\A)$ such that $\kappa = \Phi_N' \kappa_{\ur} \kappa_\infty$ with $\kappa_{\ur}$ in $\mathcal{H}_{\ur}$. We fix an orthonormal basis $\mathcal{B} = \{\psi\}$ of the space  $L^2(X)$ with the following properties:

\begin{itemize}

\item $\phi' \in \mathcal{B}$,

\item Each element of $\mathcal{B}$ is an eigenfunction for all the operators $R(\kappa)$ with $\kappa$ as above, i.e., for all $\psi \in \mathcal{B}$, there exists a complex number $\lambda_\psi$ satisfying $$  R(\Phi_N')R(\kappa_{\ur})R(\kappa_\infty)\psi = R(\kappa)\psi := \int_{Z(\A) \bs G(\A)} \kappa(g)(\pi(g)\psi)  \ dg = \lambda_\psi \psi.$$

\end{itemize}
 Such a basis exists because the set of all $R(\kappa)$ as above form a commuting system of normal operators. The basis $\mathcal{B}$ naturally splits into a discrete and continuous part, with the continuous part consisting of Eisenstein series and the discrete part consisting of cusp forms and residual functions.

Given a $\kappa = \Phi_N' \kappa_{\ur} \kappa_\infty$ as above, we define the automorphic kernel $K_\kappa(g_1, g_2)$ for $g_1, g_2 \in G(\A)$
via $$K_\kappa(g_1, g_2) = \sum_{\gamma \in Z(\Q) \bs G(\Q)} \kappa(g_1^{-1} \gamma
g_2).$$

A standard calculation tells us that if $\psi=\otimes_v \psi_v$ is an element of $L^2(X)$ such that for each place $v$, $\psi_v$ is an eigenfunction for $R(\kappa_v)$ with eigenvalue $\lambda_v$, then
one has

\begin{equation}\label{RK1} \int_{Z(\A)G(\Q) \bs G(\A)}K_\kappa(g_1, g_2)\psi(g_2) dg_2 = (\prod_v \lambda_v)\psi(g_1) \end{equation}

\begin{lemma}Suppose that $\kappa_\ur = \kappa'_\ur \ast (\kappa'_\ur)^*$ for some $\kappa'_\ur \in \mathcal{H}_{\ur}$. Put $\kappa = \Phi_N' \kappa_{\ur} \kappa_\infty$. Then if $\psi \in  \mathcal{B}$ then $$\int_{Z(\A)G(\Q) \bs G(\A)}K_\kappa( g_1  , g_2)\psi(g_2) dg_2 = \lambda_\psi \psi(g_1)$$ for some $\lambda_\psi \ge 0$. Moreover $\lambda_{\phi'} \ge M_1^{-1}N_1^{-1} |\lambda'_{\ur}|^2 \hat{\kappa}_\infty(\pi_\infty) $ where the quantity $\lambda'_{\ur}$ is defined by $R(\kappa'_\ur)\phi' = \lambda'_{\ur}\phi'$.

\end{lemma}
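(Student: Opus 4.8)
The plan is first to observe that the integral operator in question is nothing but $R(\kappa)$ acting on $L^2(X)$. Indeed, unfolding the sum defining $K_\kappa$ against the $G(\Q)$-invariance of $\psi$ gives
\[
\int_{Z(\A)G(\Q)\bs G(\A)} K_\kappa(g_1,g_2)\psi(g_2)\,dg_2 = \int_{Z(\A)\bs G(\A)} \kappa(g_1^{-1}g_2)\psi(g_2)\,dg_2 = (R(\kappa)\psi)(g_1),
\]
the central characters of $\kappa$ and $\psi$ matching so that the integrand descends to $Z(\A)\bs G(\A)$. Since $\psi\in\mathcal B$ is by construction an eigenfunction of $R(\kappa)$, the left-hand side equals $\lambda_\psi\psi(g_1)$ for the corresponding eigenvalue $\lambda_\psi\in\C$; the content of the first assertion is therefore just that $\lambda_\psi\ge 0$, which I would deduce from positivity of $R(\kappa)$ on $L^2(X)$.

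Write $\kappa=\Phi'_N\,\kappa_{\ur}\,\kappa_\infty$. Because these three factors are supported at pairwise disjoint sets of places, $R(\kappa)=R(\Phi'_N)\,R(\kappa_{\ur})\,R(\kappa_\infty)$ is a product of three commuting bounded operators on $L^2(X)$, each of which is positive semi-definite and self-adjoint: (i) $R(\kappa_{\ur})=R(\kappa'_{\ur}\ast(\kappa'_{\ur})^*)=R(\kappa'_{\ur})R(\kappa'_{\ur})^*$, using $R(\kappa_1\ast\kappa_2)=R(\kappa_1)R(\kappa_2)$ and $R(\kappa^*)=R(\kappa)^*$, so it has the form $BB^*$; (ii) $\delta_N^{-1}R(\Phi'_N)$ is an orthogonal projection, being self-adjoint (as $\Phi'_N(g^{-1})=\overline{\Phi'_N(g)}$) and idempotent (as $\Phi'_N\ast\Phi'_N=\delta_N\Phi'_N$ by Proposition~\ref{keymatrixprop}), while $\delta_N=\prod_{p\mid N}\delta_{\pi_p}>0$; (iii) $R(\kappa_\infty)$ acts on each irreducible constituent of $L^2(X)$ by the scalar $\hat\kappa_\infty(\sigma_\infty)\ge 0$ by property~(2) of $\kappa_\infty$. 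A product of pairwise commuting positive semi-definite bounded self-adjoint operators is again positive semi-definite, hence $R(\kappa)\ge 0$ and every $\lambda_\psi=\langle R(\kappa)\psi,\psi\rangle\ge 0$.

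For the lower bound on $\lambda_{\phi'}$ I would use that $\phi'=\otimes_v\phi'_v$ is factorizable and evaluate the local eigenvalues via \eqref{RK1}. At each $p\mid N$ one has $\phi'_p=v'_{\pi_p}$, so Proposition~\ref{keymatrixprop} gives $R(\Phi'_N)\phi'=\big(\prod_{p\mid N}\delta_{\pi_p}\big)\phi'=\delta_N\phi'$ with $\delta_N\gg N_1^{-1}M_1^{-1}$. At $\infty$, $R(\kappa_\infty)\phi'=\hat\kappa_\infty(\pi_\infty)\phi'$ with $\hat\kappa_\infty(\pi_\infty)\ge 0$ (indeed $\gg 1$ by property~(3)). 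At the primes away from $N$, $R(\kappa'_{\ur})\phi'=\lambda'_{\ur}\phi'$ by definition, and since $\phi'$ is a simultaneous eigenvector of the commuting family of normal operators $\{R(\kappa_{\ur})\}$, the relation passes to the adjoint: $R((\kappa'_{\ur})^*)\phi'=R(\kappa'_{\ur})^*\phi'=\overline{\lambda'_{\ur}}\,\phi'$, whence $R(\kappa_{\ur})\phi'=|\lambda'_{\ur}|^2\phi'$. Multiplying the three local eigenvalues, $\lambda_{\phi'}=\delta_N\,|\lambda'_{\ur}|^2\,\hat\kappa_\infty(\pi_\infty)\ge M_1^{-1}N_1^{-1}\,|\lambda'_{\ur}|^2\,\hat\kappa_\infty(\pi_\infty)$.

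Nearly all of this is formal bookkeeping with the algebra map $\kappa\mapsto R(\kappa)$ (the identities $R(\kappa_1\ast\kappa_2)=R(\kappa_1)R(\kappa_2)$ and $R(\kappa^*)=R(\kappa)^*$, both available from the section) together with the unfolding above. The one step that deserves care is the positivity input (iii): one needs $\hat\kappa_\infty(\sigma_\infty)\ge 0$ for the archimedean component of \emph{every} element of $\mathcal B$, and such $\sigma_\infty$ runs over all the spherical unitary representations of $G(\R)$ (with central character trivial on $\R^+$) occurring in $L^2(X)$ — the tempered principal series (Eisenstein pieces and tempered cusp forms), the complementary series (exceptional Maass forms, parameter in the segment $i(0,1/2)$), and the one-dimensional $\chi_\infty\circ\det$ from the residual spectrum, whose spherical matrix coefficient is identically $1$ on $G(\R)^+$ since $\det>0$ there. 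Property~(2) of the function $\kappa_\infty$ supplied by \cite[Lemma~2.1]{templier-sup-2} is tailored to cover exactly this range of spectral parameters, so I would simply invoke it; this is the only non-elementary ingredient.
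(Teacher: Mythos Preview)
Your proof is correct and follows essentially the same route as the paper's. The paper argues eigenvalue-by-eigenvalue: it writes $\lambda_\psi=\lambda_{\psi,N}\lambda_{\psi,\ur}\lambda_{\psi,\infty}$ and shows each factor is nonnegative, invoking Corollary~\ref{nonneg} for $\lambda_{\psi,N}$, the $BB^*$ form for $\lambda_{\psi,\ur}$, and property~(2) of $\kappa_\infty$ for $\lambda_{\psi,\infty}$. You package the same three ingredients as operator positivity statements ($\delta_N^{-1}R(\Phi'_N)$ is a projection, $R(\kappa_{\ur})=BB^*$, $R(\kappa_\infty)\ge 0$ spectrally) and multiply commuting positive operators. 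Your formulation is arguably cleaner in that it does not tacitly assume each $\psi\in\mathcal B$ is a separate eigenfunction of all three factors; but the substance and the inputs are identical.
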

\begin{proof}By our assumption that $\psi \in  \mathcal{B}$, a complex number $\lambda_\psi$ as above exists. We can write $\lambda_\psi = \lambda_{\psi,N} \lambda_{\psi,\ur} \lambda_{\psi,\infty}$ using the decomposition  $R(\kappa) = R(\Phi_N')R(\kappa_{\ur})R(\kappa_\infty)$. We have $\lambda_{\psi,\infty} \ge 0$ by our assumption $\hat{\kappa}_\infty(\sigma) \ge 0$ for all irreducible spherical unitary principal series representations $\sigma$ of $G(\R)$. We have $\lambda_{\psi,N} \ge 0$ by Corollary~\ref{nonneg}. Finally if $R(\kappa'_\ur) \psi = \lambda_\psi' \psi$ then $\lambda_{\psi,\ur} = |\lambda_\psi'|^2 \ge 0$. Hence $\lambda_\psi \ge 0$. The last assertion is immediate from the results of the previous subsection.

\end{proof}

Henceforth we assume that $\kappa_\ur = \kappa'_\ur \ast (\kappa'_\ur)^*$ for some $\kappa'_\ur \in \mathcal{H}_{\ur}$ and we put $\kappa = \Phi_N' \kappa_{\ur} \kappa_\infty$. Then spectrally expanding $K_\kappa( g  , g)$ along $\mathcal{B}$ and using the above Lemma, we get for all $g \in G(\A)$,

$$ M_1^{-1}N_1^{-1} \hat{\kappa}_\infty(\pi_\infty)|\lambda'_{\ur} \ \phi'(g)|^2  \le K_\kappa(g,g).$$

     Note that $\hat{\kappa}_\infty(\pi_\infty) \ge 1$. Next we look at the quantity $K_\kappa(g,g)$. Assume that $g = k_{N} n(x)a(y)$ with $k_{N} = \prod_{p|N}k_p \in K_{N}$, $k_Nh_N \in \J_N$. The second condition means that $k_p \in w K_p^0(p)$ for all $p|N_2$. We have $$K_\kappa(g,g) = \sum_{\gamma \in Z(\Q) \bs G(\Q)}\Phi'_N(k_{N}^{-1}\gamma k_{N}) \kappa_\ur(\gamma) \kappa_\infty((n(x)a(y))^{-1}\gamma n(x)a(y)).$$  Above we have $\Phi'_N(k_{N}^{-1}\gamma k_{N} ) \le 1$, and moreover if $\Phi'_N(k_{N}^{-1}\gamma k_{N} ) \neq 0$ then we must have a) $k_{p}^{-1}\gamma k_{p} \in Z(\Q_p)G(\Z_p)$ for all primes $p$ dividing $N$, and b) $k_{p}^{-1}\gamma k_{p} \in Z(\Q_p) K_p^0(p)$ for all primes $p$ dividing $N_2$. The condition a) implies that $\gamma \in Z(\Q_p)G(\Z_p)$ for all primes $p$ dividing $N$. The  condition b), together with the fact that $k_p \in w K_p^0(p)$ for all $p|N_2$,  implies that $\gamma \in Z(\Q_p)K_{0,p}(p)$ for all primes $p$ dividing $N_2$.

     Finally we have $\kappa_\infty(g) = 0$ if $\det(g)<0$, and if $\det(g)>0$ we can write $\kappa_\infty(g) = \kappa_\infty(u(g))$ as explained earlier, whence
     $$\kappa_\infty((n(x)a(y))^{-1}\gamma n(x)a(y)) =\kappa_\infty(u(z, \gamma z)), \quad z=x+iy$$ where for any two points $z_1, z_2$ on the upper-half plane, $u(z_1, z_2)$ denotes the hyperbolic distance between them, i.e., $u(z_1, z_2) = \frac{|z_1 - z_2|^2 }{4 \Im(z_1)\Im(z_2)}.$
Putting everything together, we get the following Proposition.

     \begin{proposition}\label{p:ampl}Let $\kappa'_\ur \in \mathcal{H}_{\ur}$ and suppose that $R(\kappa'_\ur)\phi' = \lambda'_\ur \phi'$. Let $\kappa_\ur = \kappa'_\ur \ast (\kappa'_\ur)^*$ and  $\kappa = \Phi_N' \kappa_{\ur} \kappa_\infty$. Then for all $z=x+iy$  and all $k \in K_{N}$ such that $kh_N \in \J_N$, we have

$$|\phi'(k n(x)a(y))|^2 \le \frac{M_1N_1}{|\lambda'_\ur|^2}  \sum_{\substack{\gamma \in Z(\Q) \bs G(\Q)^+, \\ \gamma \in Z(\Q_p)K_{0,p}(p) \forall p|N_2 \\ \gamma  \in Z(\Q_p)G(\Z_p) \forall p|N  }} |\kappa_\ur(\gamma) \ \kappa_\infty(u(z, \gamma z))|  $$

\end{proposition}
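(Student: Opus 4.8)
The plan is to read this off the amplified pre-trace formula assembled in the previous subsection; the proof is essentially bookkeeping. Write $g = k\,n(x)a(y)$, so that the finite part of $g$ equals $k$ at the primes dividing $N$ and $1$ elsewhere, while the infinite part $g_\infty = n(x)a(y)$ carries $i$ to $z = x+iy$. The first step is the spectral side. Expanding the automorphic kernel $K_\kappa(g,g) = \sum_{\gamma \in Z(\Q)\bs G(\Q)} \kappa(g^{-1}\gamma g)$ along the orthonormal basis $\mathcal{B}$ and invoking \eqref{RK1} together with the Lemma preceding this proposition gives $K_\kappa(g,g) = \sum_{\psi \in \mathcal{B}} \lambda_\psi\,|\psi(g)|^2$ with $\lambda_\psi \ge 0$ for every $\psi$ --- the positivity coming from $\hat\kappa_\infty(\sigma) \ge 0$ at the archimedean place, from Corollary \ref{nonneg} at the primes dividing $N$, and from $\lambda_{\psi,\ur} = |\lambda'_\psi|^2$ in the unramified directions. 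Keeping only the term $\psi = \phi'$ and using $\lambda_{\phi'} \ge M_1^{-1}N_1^{-1}|\lambda'_\ur|^2\,\hat\kappa_\infty(\pi_\infty) \ge M_1^{-1}N_1^{-1}|\lambda'_\ur|^2$, one gets the lower bound $M_1^{-1}N_1^{-1}|\lambda'_\ur|^2\,|\phi'(g)|^2 \le K_\kappa(g,g)$.

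The second step is to estimate $K_\kappa(g,g)$ geometrically. Since $\kappa = \Phi'_N\,\kappa_\ur\,\kappa_\infty$ factors over the places, $\kappa(g^{-1}\gamma g) = \Phi'_N(k^{-1}\gamma k)\,\kappa_\ur(\gamma)\,\kappa_\infty(g_\infty^{-1}\gamma g_\infty)$. The factor $\Phi'_N(k^{-1}\gamma k)$ is a product of matrix coefficients of unitary representations on unit vectors, hence of absolute value at most $1$, and it vanishes unless $k_p^{-1}\gamma k_p \in Z(\Q_p)K^0_p$ for every $p\mid N$. As each $k_p \in G(\Z_p)$, this forces $\gamma \in Z(\Q_p)G(\Z_p)$ for all $p \mid N$; and since $kh_N \in \J_N$ forces $k_p \in wK^0_p(p)$ at every $p \mid N_2$, where moreover $K^0_p = K^0_p(p)$ and $wK^0_p(p)w^{-1} = K_{0,p}(p)$, the condition $k_p^{-1}\gamma k_p \in Z(\Q_p)K^0_p(p)$ forces $\gamma \in Z(\Q_p)K_{0,p}(p)$ for all $p\mid N_2$. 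Finally $\kappa_\infty$ is supported on $G(\R)^+$ and satisfies $\kappa_\infty(h) = \kappa_\infty(u(h))$, so picking the representative $\gamma \in G(\Q)^+$ we have $\kappa_\infty(g_\infty^{-1}\gamma g_\infty) = \kappa_\infty(u(z, \gamma z))$. Taking absolute values and discarding the factor $\Phi'_N(k^{-1}\gamma k)$ yields
$$|K_\kappa(g,g)| \le \sum_{\substack{\gamma \in Z(\Q)\bs G(\Q)^+\\ \gamma \in Z(\Q_p)K_{0,p}(p)\ \forall p\mid N_2\\ \gamma \in Z(\Q_p)G(\Z_p)\ \forall p\mid N}} |\kappa_\ur(\gamma)\,\kappa_\infty(u(z, \gamma z))|.$$
Combining the two bounds and multiplying by $M_1 N_1 |\lambda'_\ur|^{-2}$ is the assertion.

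I do not expect a genuine obstacle here: the two substantive inputs --- the nonnegativity of all spectral coefficients (which ultimately rests on the essential idempotency of $R(\Phi'_N)$, i.e.\ Proposition \ref{keymatrixprop} and Corollary \ref{nonneg}) and the control of the support and size of $\Phi'_N$ under conjugation by elements with finite part in $\J_N$ --- are already established. The one point that must be handled with a little care is the justification of the spectral expansion of $K_\kappa$: absolute convergence, uniform convergence on compact sets, and the fact that the continuous-spectrum contribution is likewise nonnegative and may simply be dropped. This is standard for $\kappa$ smooth and compactly supported modulo the centre, so the remaining work is genuinely routine.
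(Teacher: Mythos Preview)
Your proof is correct and follows essentially the same route as the paper: spectrally expand $K_\kappa(g,g)$ along $\mathcal{B}$, drop all terms but $\phi'$ using positivity (via Corollary~\ref{nonneg}, $\hat\kappa_\infty \ge 0$, and $\kappa_\ur = \kappa'_\ur * (\kappa'_\ur)^*$), then bound the geometric side using $|\Phi'_N| \le 1$ together with the support conditions, translating $k_p^{-1}\gamma k_p \in Z(\Q_p)K_p^0(p)$ into $\gamma \in Z(\Q_p)K_{0,p}(p)$ via $k_p \in wK_p^0(p)$ for $p \mid N_2$. Your caveat about the spectral expansion (convergence, continuous spectrum) is also apt; the paper simply asserts this step without further comment.
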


\subsection{Conclusion}\label{s:conclusion}

We now make a specific choice for $\kappa_\ur$. Let $\Lambda \ge 1$ be a real number. We let $$S= \{\ell: \ell \text{ prime, }(\ell, N)=1, \ \Lambda \le \ell \le 2\Lambda\}. $$

Define for each integer $r$, $$c_r = \begin{cases} \frac{|\lambda_\pi(r)|}{\lambda_\pi(r)} &\text{ if } r = \ell \text{ or } r=\ell^2, \ \ell \in S, \\ 0 &\text{ otherwise.} \end{cases}$$

We put $\kappa'_\ur = \sum_r c_r \kappa_r$, and $\kappa_\ur = \kappa'_\ur \ast (\kappa'_\ur)^*$. Given this, let us estimate the quantities appearing in Proposition~\ref{p:ampl}.

First of all, we have $\lambda'_\ur = \sum_{\ell \in S}(|\lambda_\pi(\ell)| + |\lambda_\pi(\ell^2)|).$ By the well-known relation $\lambda_\pi(\ell)^2 - \lambda_\pi(\ell^2) = \omega_{\pi_\ell}(\ell)$, it follows that $|\lambda_\pi(\ell)| + |\lambda_\pi(\ell^2)| \ge 1$. Hence $\lambda'_\ur \gg_\eps \Lambda^{1-\eps}$.

Next, using the well-known relation $$\kappa_m \ast \kappa_n^\ast = \sum_{t|
\gcd(m,n)} \left(\prod_{p|t}\omega_{\pi_p}(t)\right) \left(\prod_{p|n}\omega_{\pi_p}^{-1}(n)\right) \kappa_{mn/t^2},$$

we see that $$\kappa_\ur = \sum_{1 \le l \le 16\Lambda^4} y_l \kappa_l$$ where the complex numbers $y_l$ satisfy:

$$|y_l| \ll \begin{cases}\Lambda, & l=1,\\1, &l = \ell_1 \text{ or } l = \ell_1 \ell_2 \text{ or } l = \ell_1\ell_2^2 \text{ or } l = \ell_1^2\ell_2^2 \text{ with } \ell_1, \ell_2 \in S \\ 0, &\text{otherwise.}\end{cases}$$

We have $|\kappa_l(\gamma)| \le l^{-1/2}$. Moreover $\kappa_l(\gamma) = 0$ unless $\gamma \in Z(\Q_p)G(\Z_p)a(\ell)G(\Z_p)$ for all $p \nmid N$. We deduce the following bound,

\begin{equation}\label{e:29}|\phi'(k n(x)a(y))|^2 \ll_\eps  \Lambda^{-2+\eps} M_1N_1  \sum_{1 \le l \le 16\Lambda^4} \frac{y_l}{\sqrt{l}}\sum_{\substack{\gamma \in Z(\Q) \bs G(\Q)^+, \\ \gamma \in Z(\Q_p)K_{0,p}(p) \forall p|N_2 \\ \gamma  \in Z(\Q_p)G(\Z_p)a(\ell)G(\Z_p) \forall p\nmid N_2  }} | \kappa_\infty(u(z, \gamma z))| . \end{equation}

Define $$M(\ell, N_2) = \{\mat{a}{b}{c}{d}, \ a,b,c,d \in \Z, \ a>0, N_2 |c,  \ ad-bc= \ell\}.$$

The following lemma follows immediately from strong approximation.

\begin{lemma}Let $\gamma \in G(\Q)^+$ and $\ell$ be a positive integer coprime to $N_2$. Suppose that for each prime $p$, $\gamma \in Z(\Q_p)G(\Z_p)a(\ell)G(\Z_p)$. Suppose also that for each prime $p|N_2$, $\gamma \in Z(\Q_p)K_{0,p}(p)$.
Then there exists $z \in Z(\Q)$ such $z\gamma \in M(\ell, N_2)$.
\end{lemma}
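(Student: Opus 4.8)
The plan is to produce the scalar $z \in Z(\Q)$ explicitly and then check place by place that $z\gamma \in M(\ell,N_2)$; everything is driven by the product formula together with the two local hypotheses, which is why this ``follows immediately from strong approximation''. First I would normalize the determinant. For each prime $p$, the first hypothesis lets us write $\gamma = z(t_p)\kappa_1 a(\ell)\kappa_2$ with $t_p \in \Q_p^\times$ and $\kappa_1,\kappa_2 \in G(\Z_p)$, so taking determinants gives $v_p(\det\gamma) = 2v_p(t_p)+v_p(\ell)$; thus $v_p(\det\gamma/\ell)$ is even for every $p$. Since $\gamma \in G(\Q)^+$ and $\ell>0$, the positive rational $\det\gamma/\ell$ is then the square of a unique $s \in \Q_{>0}$, and I set $z := z(s^{-1})$. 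Because both local hypotheses are stable under left multiplication by $Z(\Q_p)$, nothing is lost in replacing $\gamma$ by $z\gamma$; so I may assume $\det\gamma = \ell$, and it remains only to adjust $\gamma$ by a further scalar in $Z(\Q)$ so that it lands in $M(\ell,N_2)$.

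Next I would establish integrality and the congruence on the lower-left entry. With $\det\gamma = \ell$, the decomposition $\gamma = z(t_p)\kappa_1 a(\ell)\kappa_2$ forces $t_p^2 = (\det\kappa_1\det\kappa_2)^{-1} \in \Z_p^\times$, hence $t_p \in \Z_p^\times$; since $a(\ell) \in M_2(\Z_p)$, this shows $\gamma \in M_2(\Z_p)$ for all $p$, i.e. $\gamma$ has integer entries $a,b,c,d$ with $ad-bc = \ell$. For the primes $p \mid N_2$ I would instead use the stronger hypothesis $\gamma = z(t_p')\kappa$ with $\kappa \in K_{0,p}(p)$: since $\gcd(\ell,N_2)=1$ we have $v_p(\ell)=0$, so the determinant identity again gives $t_p' \in \Z_p^\times$, whence $c \in t_p'\cdot p\Z_p \subseteq p\Z_p$. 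As $N_2$ is squarefree, this yields $N_2 \mid c$.

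Finally, the entry $a$ is nonzero — automatically so when $N_2>1$, since for $p \mid N_2$ it equals $t_p'$ times the $(1,1)$-entry of an element of $K_{0,p}(p)$, which is a $p$-adic unit because the determinant is a unit and the lower-left entry lies in $p\Z_p$ — so replacing $\gamma$ by $-\gamma = z(-1)\gamma$ if necessary makes $a>0$ while preserving integrality, the divisibility $N_2\mid c$, and $\det(\gamma)=\ell$; the resulting matrix then lies in $M(\ell,N_2)$. There is no serious obstacle in any of this; the one step that merits a moment's care is the passage in the determinant normalization from ``everywhere-even valuation'' to ``rational square'', which is precisely where positivity of $\det\gamma$ (i.e. $\gamma \in G(\Q)^+$) enters, together with the observation that rescaling by the centre preserves both local conditions so that this normalization is free.
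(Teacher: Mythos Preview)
Your argument is exactly the natural elaboration of what the paper dismisses in one line as ``follows immediately from strong approximation'' (the paper's own proof is simply ``Omitted''), and the substantive steps --- squaring out the determinant via evenness of all $p$-adic valuations of $\det\gamma/\ell$, deducing integrality from $t_p \in \Z_p^\times$, and reading off $N_2 \mid c$ from the $K_{0,p}(p)$ hypothesis at primes $p\mid N_2$ --- are all correct.

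There is one small gap in the final step. You assert that the $(1,1)$-entry $a$ is nonzero but only justify this when $N_2>1$; when $N_2=1$ the claim can genuinely fail. For instance, with $\ell$ prime the matrix $\gamma = \left[\begin{smallmatrix}0&-1\\ \ell &0\end{smallmatrix}\right]$ lies in $G(\Q)^+$, satisfies $\gamma \in G(\Z_p)a(\ell)G(\Z_p)$ for every prime $p$, and the second hypothesis is vacuous; yet every central multiple $z(s)\gamma$ with determinant $\ell$ has top-left entry $0$, so no representative lands in $M(\ell,1)$ as defined. This is really a defect in the lemma's \emph{statement} rather than in your method: one repairs it by mildly enlarging $M(\ell,N_2)$ (e.g.\ allow $a=0$ together with $b>0$, which still yields a unique representative in each $Z(\Q)$-orbit), or, for the intended application, by observing that the finitely many $a=0$ terms are harmlessly absorbed into the subsequent lattice-point count. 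Your proof is otherwise complete.
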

\begin{proof} Omitted.
\end{proof}

Let us take another look at \eqref{e:29} in view of the above Lemma. The sum in \eqref{e:29} is over all matrices $\gamma$ in $Z(\Q) \bs G(\Q)^+$ such that  $\gamma \in Z(\Q_p)K_{0,p}(p)$ for $p|N_2$ and $\gamma  \in Z(\Q_p)G(\Z_p)a(\ell)G(\Z_p)$ for $p\nmid N_2$. The latter condition can equally well be taken to over all $p$ as $Z(\Q_p)G(\Z_p)a(\ell)G(\Z_p) = Z(\Q_p)G(\Z_p)a(\ell)G(\Z_p)= Z(\Q_p)G(\Z_p)$ if $\ell$ and $p$ are coprime, which is certainly the case when $p|N$. Therefore the above Lemma, together with the fact  that the natural map from $M(\ell, N_2)$ to $Z(\Q) \bs G(\Q)^+$ is an injection, implies that the sum in \eqref{e:29} can replaced by a sum over the set  $M(\ell, N_2)$. Hence, writing $g = kh_{N} \in \J_N $ as before, we get

\begin{equation}\label{usefulsum}|\phi(g n(x)a(y))|^2 = |\phi'(k n(x)a(y))|^2\ll_\eps  \Lambda^{-2+\eps} M_1N_1  \sum_{1 \le l \le 16\Lambda^4} \frac{y_l}{\sqrt{l}}\sum_{\gamma \in M(\ell, N_2)} | \kappa_\infty(u(z, \gamma z))| . \end{equation}

For any $\delta>0$, we define

 $$N(z, \ell, \delta, N_2) = \left| \{\gamma \in M(\ell, N_2) : u(z, \gamma z) \le \delta\}\right|.$$
We have the following counting result due to Templier~\cite[Proposition 6.1]{templier-sup-2}.

\begin{proposition}\label{countingresult}Let $z=x+iy \in \F_{N_2}$. For any $0< \delta<1$ and a positive integer $\ell$ coprime to $N_2$, let the number $N(z, \ell, \delta, N_2)$ be defined as above.

For $\Lambda \ge 1$, define $$A(z, \Lambda, \delta, N_2) = \sum_{1 \le l \le 16\Lambda^4} \frac{y_l}{\sqrt{l}} N(z, \ell, \delta, N_2).$$ Then

$$A(z, \Lambda, \delta, N_2)\ll_\eps \Lambda ^\eps N_2^\eps \bigg[\Lambda   + \Lambda N_2^{1/2} \delta^{1/2} y + \Lambda^{5/2}\delta^{1/2}N_2^{-1/2}+ \Lambda^{5/2}\delta^{1/2}y  +  \Lambda^4  \delta N_2^{-1}\bigg]. $$

\end{proposition}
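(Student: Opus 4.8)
The statement is Proposition 6.1 of \cite{templier-sup-2}; since $N_2$ is squarefree, its proof applies here essentially verbatim, so the plan is to carry out the short reduction that makes this transparent and then to isolate the single ingredient that does the real work.

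First I would reduce to bounding the individual counts $N(z,\ell,\delta,N_2)$. For $\gamma = \mat{a}{b}{c}{d}$ with $\det\gamma = \ell$ one has the identity $u(z,\gamma z) = |cz^2 - (a-d)z - b|^2/(4\ell y^2)$, so the condition $u(z,\gamma z)\le\delta$ is equivalent to the pair of inequalities $|2cx - (a-d)|\le 2(\delta\ell)^{1/2}$ and $|c(x^2-y^2) - (a-d)x - b| \le 2(\delta\ell)^{1/2}y$. Moreover, since $\delta<1$, the bound $u(z,\gamma z)\le\delta$ forces $\Im(\gamma z)\asymp y$, hence $|cz+d|^2\asymp\ell$; together with $|cz+d|^2\ge c^2y^2$ this gives $|c|\ll\ell^{1/2}/y$, while when $c\neq 0$ the congruence $N_2\mid c$ together with $z\in\F_{N_2}$ (so $y\ge\sqrt3/(2N_2)$) forces $N_2\le|c|\ll\ell^{1/2}/y$, so in particular $c=0$ unless $\ell\gg N_2^2y^2$. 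The first inequality then confines $a-d$ to an interval of length $\ll(\delta\ell)^{1/2}$ once $c$ is fixed, and the second confines $b$ to an interval of length $\ll(\delta\ell)^{1/2}y$ once $c$ and $a-d$ are fixed.

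The upper-triangular contribution ($c=0$) is elementary: then $ad=\ell$ with $a,d\asymp\ell^{1/2}$, so there are $\ll_\eps\ell^\eps$ pairs $(a,d)$, and for each the inequalities leave $\ll 1 + (\delta\ell)^{1/2}y$ choices of $b$, contributing $\ll_\eps\ell^\eps(1+(\delta\ell)^{1/2}y)$ to $N(z,\ell,\delta,N_2)$ — and for $\ell=1$ this is $\ll 1+\delta^{1/2}y$. The contribution of the $\gamma$ with $c\neq 0$ is the genuine obstacle: one must count integer triples $(c,a,d)$ with $N_2\mid c$ and $N_2\le|c|\ll\ell^{1/2}/y$ in the region cut out by the two inequalities above and by the Diophantine constraint that $b=(ad-\ell)/c$ be an integer — equivalently, that $(a-d)^2 + 4(\ell+bc)$ be a perfect square. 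A naive multiplication of the ranges of $c$, $a-d$ and $b$ overcounts badly; the point of \cite[Prop. 6.1]{templier-sup-2} is precisely the more careful lattice-point estimate that exploits the determinant/perfect-square constraint (this is also where the divisor function and the factor $N_2^\eps$ enter), and I would invoke that estimate as a black box. Its proof goes through unchanged for squarefree $N_2$ because the only features of $\F_{N_2}$ it uses are $y\ge\sqrt3/(2N_2)$ and $|cz+d|^2\ge 1/N_2$ for $(c,d)\neq(0,0)$, both of which hold by construction.

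Finally I would assemble the weighted sum $A(z,\Lambda,\delta,N_2) = \sum_{1\le l\le 16\Lambda^4}\frac{y_l}{\sqrt l}N(z,\ell,\delta,N_2)$. The term $l=1$ contributes $\ll\Lambda(1 + \delta^{1/2}y)$, which accounts for the terms $\Lambda$ and $\Lambda N_2^{1/2}\delta^{1/2}y$ in the bound. For $l>1$ the weight $y_l$ is supported, with $|y_l|\ll 1$, on the $\ll_\eps\Lambda^{\min(j,2)+\eps}$ integers $\ell\asymp\Lambda^j$, $j\in\{1,2,3,4\}$, that are products of at most two primes of $S$ (with possible squares); feeding the bound for $N(z,\ell,\delta,N_2)$ into the sum, dividing by $\ell^{1/2}$ and summing dyadically in $\ell$ produces exactly the remaining terms $\Lambda^{5/2}\delta^{1/2}N_2^{-1/2}$, $\Lambda^{5/2}\delta^{1/2}y$ and $\Lambda^4\delta N_2^{-1}$, up to a factor $(\Lambda N_2)^\eps$. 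The only non-routine step in the whole argument is the off-diagonal lattice count of the previous paragraph; everything else is bookkeeping of ranges and of the amplifier weights.
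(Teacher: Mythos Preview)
Your proposal is correct and matches the paper's own approach exactly: the paper simply records that this is Prop.~6.1 of \cite{templier-sup-2}, and you likewise invoke that result, supplying in addition a helpful sketch of the reduction and of how the amplifier weights are summed. Since $N_2$ is squarefree and the definition of $\F_{N_2}$ here agrees with Templier's, the citation is indeed a complete proof; nothing more is needed.
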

\begin{proof}This is just Prop. 6.1 of \cite{templier-sup-2}.
\end{proof}
 Now~\eqref{usefulsum} gives us

\begin{equation}\label{usefulsum2}|\phi(g n(x)a(y))|^2 \ll_\eps  \Lambda^{-2+\eps} M_1N_1  \int_{0}^1 |\kappa_\infty(\delta)|  dA(z, \Lambda, \delta, N_2). \end{equation}

Using Proposition~\ref{countingresult} and the property $|\kappa_\infty(\delta)| \le \min(T,\frac{T^{1/2}}{\delta^{1/4}})$, we immediately deduce Proposition~\ref{prop:ampl} after a simple integration, as in \cite[6.2]{templier-sup-2}.

\bibliography{refs-que}

\end{document}